\newtheorem{thm}{Theorem}
\newtheorem{lem}{Lemma}[section] 
\newtheorem{prop}[lem]{Proposition}
\newtheorem{cor}[thm]{Corollary} 
\newtheorem{claim}{Claim}
\theoremstyle{definition}
\newtheorem{defn}{Definition}[section]
\newtheorem*{nota}{Notation}
\numberwithin{equation}{section}
 \newcommand{\R}{\mathbb R} 
 \newcommand{\C}{\mathbb C} 
 \newcommand{\T}{\mathbb T}
 \newcommand{\la}{\lambda} 
 \newcommand{\D}{\displaystyle} 
 \newcommand{\A}{\mathcal{A}} 
 \newcommand{\B}{\mathfrak{B}} 
 \newcommand{\q}{\quad} 
 \newcommand{\qq}{\qquad} 
 \newcommand{\ph}[1]{\phantom{#1}} 
 \newcommand{\Ch}[1]{\operatorname{Ch}(#1)}
 \newcommand{\si}{\sigma} 
 \newcommand{\ov}{\overline} 
 \newcommand{\fr}{\frac} 
 \newcommand{\set}[1]{\{ #1 \}} 
 \newcommand{\al}{\alpha}
 \renewcommand{\Re}{{\rm Re}\,} 
 \renewcommand{\Im}{{\rm Im}\,} 
 \renewcommand{\d}{\delta} 
 \newcommand{\dphi}[1]{\d_{\Phi(#1)}}
 \newcommand{\unit}{\mathbf{1}}
 \newcommand{\tio}{\widetilde{\mathbf{1}}}
 \newcommand{\muo}{\mu_1}
 \newcommand{\h}{\eta}
 \DeclareSymbolFont{largesymbol}{OMX}{yhex}{m}{n}
 \newcommand{\wh}{\widehat}
 \newcommand{\wt}{\widetilde}
 \newcommand{\pio}{\pi_1}
 \newcommand{\pit}{\pi_2}
 \newcommand{\varphio}{\varphi_1}
 \newcommand{\Di}{\mathbb D}
 \newcommand{\Db}{\overline{\mathbb D}}
 \newcommand{\M}{\mathcal M}
 \newcommand{\clk}[2]{\operatorname{cl}_{#1}(#2)}
\newcommand{\Aos}{\TA_1^*}
\newcommand{\po}{\p_1}
\newcommand{\pt}{\p_2}
\newcommand{\xo}{\x_1}
\newcommand{\xt}{\x_2}
\newcommand{\yo}{\y_1}
\newcommand{\yt}{\y_2}
\newcommand{\xio}{\xi_1}
\newcommand{\xit}{\xi_2}
\newcommand{\del}{\Delta}
\newcommand{\ext}{\operatorname{ext}(\Aos)}
\newcommand{\laz}{\la_0}
\newcommand{\lao}{\la_1}
\newcommand{\alo}{\al_1}
\renewcommand{\B}{B}
\renewcommand{\A}{A}
\newcommand{\X}{X}
\newcommand{\Y}{Y}
 \newcommand{\TF}{\widetilde{f}}
\newcommand{\XYT}{\X\times\Y\times\T}
\newcommand{\TA}{\widetilde{\A}}
 \newcommand{\TG}{\widetilde{g}}
\newcommand{\chta}{\Ch{\TA}}
 \newcommand{\cha}{\Ch{\A}}
 \newcommand{\chb}{\Ch{\B}} 
 \renewcommand{\TH}{\wt{h}}
 \newcommand{\phio}{\phi}
 \newcommand{\psio}{\psi}
 \newcommand{\nuz}{\nu_0}
 \newcommand{\nuo}{\nu_1}
 \newcommand{\nut}{\nu_2}
 \newcommand{\MD}{\mathcal{D}}
 \newcommand{\Z}{\mathcal{Z}}
 \newcommand{\bta}{\MD\times\T}
 \newcommand{\e}{\epsilon} 
 \newcommand{\ez}{\epsilon_0} 
 \newcommand{\eo}{\epsilon_1} 
 \newcommand{\et}{\epsilon_2} 
 \newcommand{\ve}{\varepsilon}
 \newcommand{\PAp}{\PA_+}
 \newcommand{\PAm}{\PA_-}
 \newcommand{\K}{K}
 \newcommand{\PA}{\partial\A}
 \newcommand{\PB}{\partial\B}
 \newcommand{\PBp}{\PB_+}
 \newcommand{\PBm}{\PB_-} 
 \newcommand{\V}[1]{\left\Vert#1\right\Vert}
 \newcommand{\VL}[1]{\Vert#1\Vert}
 \newcommand{\VK}[1]{\left\Vert#1\right\Vert_{\K}}
 \newcommand{\VA}[1]{\V{#1}}
 \newcommand{\VLA}[1]{\VL{#1}}
\newcommand{\Vinf}[1]{\left\Vert#1\right\Vert_\infty}
 \newcommand{\VLinf}[1]{\Vert#1\Vert_\infty}
 \newcommand{\VX}[1]{\left\Vert#1\right\Vert_{\X}}
 \renewcommand{\VX}[1]{\left\Vert#1\right\Vert_{\X}}
 \newcommand{\VY}[1]{\left\Vert#1\right\Vert_{\Y}}
 \newcommand{\VS}[1]{\left\Vert #1 \right\Vert_{(\Sigma)}}
 \newcommand{\Vs}[1]{\left\Vert #1 \right\Vert_{(\sigma)} }
 \newcommand{\x}{x}
 \newcommand{\y}{y}
 \newcommand{\p}{p}
 \newcommand{\pz}{\p_0}
 \newcommand{\xz}{\x_0}
 \newcommand{\yz}{\y_0}
\def\rsout#1{\color{red} \xout{#1}\color{black} \marginnote{\color{red}
$\leftrightarrow$ \color{black}}}
\begin{document}


\title[Isometries on function spaces]{Surjective isometries on function spaces with derivatives}

\author{M.G. Cabrera-Padilla}
\address{Departamento de Matem\'{a}ticas,
Universidad de Almer\'{i}a,
04120,
Almer\'{i}a, Spain}
\email{m\_gador@hotmail.com}

\author[A.~Jim\'{e}nez-Vargas]{A. Jim\'{e}nez-Vargas}
\address{Departamento de Matem\'{a}ticas,
Universidad de Almer\'{i}a,
04120,
Almer\'{i}a, Spain}
\email{ajimenez@ual.es}

\author[T.~Miura]{Takeshi Miura}
\address{Department of Mathematics,
Faculty of Science, Niigata University,
Niigata 950-2181, Japan}
\email{miura@math.sc.niigata-u.ac.jp}

\author[M.~Villegas-Vallecillos]{Mois\'{e}s Villegas-Vallecillos}
\address{Departamento de Matem\'{a}ticas,
Universidad de C\'{a}diz,
11510 Puerto Real, Spain}
\email{moises.villegas@uca.es}

\begin{abstract}
Let $\A$ be a complex Banach space with a norm $\VA{f}=\VX{f}+\VY{d(f)}$ for $f\in\A$, where $d$ is a complex linear map from $\A$ onto a Banach space $\B$, and $\VK{\cdot}$ represents the supremum norm on a compact Hausdorff space $\K$. In this paper, we characterize surjective isometries on $(\A,\VA{\cdot})$, which may be nonlinear. This unifies former results on surjective isometries between specific function spaces.
\end{abstract}

\maketitle

\section{Introduction and the main results}

Given a real or complex Banach space $(E,\V{\cdot})$, a mapping $T\colon E\to E$ is referred to as an \textit{isometry} if $\V{T(a)-T(b)}=\V{a-b}$ for all $a,b\in E$. The earliest known research on isometries can be traced back to the 1930s. Let $C_\R(\X)$ denote the real Banach space of all real-valued continuous functions on a compact Hausdorff space $\X$, equipped with the supremum norm $\VX{\cdot}$. In a seminal work, Banach \cite{ban} characterized surjective, possibly nonlinear, isometries from $C_\R(\X)$ onto $C_\R(\Y)$. Since then, a substantial body of research on isometries has been developed. For example, Rao and Roy \cite{rao} described surjective \textit{complex linear} isometries on the Banach space $C^1([0,1])$ of all continuously differentiable functions on the closed unit interval $[0,1]$ with respect to the norm $\VS{f}=\Vinf{f}+\Vinf{f'}$ for $f\in C^1([0,1])$, where $\Vinf{\cdot}$ denotes the supremum norm on $[0,1]$. Novinger and Oberlin \cite{nov} conducted research on \textit{complex linear} isometries on Banach spaces of analytic functions on the open unit disc $\Di$. To be more precise, they clarified the structure of isometries on Banach spaces of all analytic functions on $\Di$ whose derivatives belong to the Hardy space $H^p(\Di)$ for $1\leq p<\infty$. The case of $p=\infty$ was resolved by the third author of this paper and Niwa \cite{miu4} for surjective isometries. The objective of this paper is to examine the properties of surjective, which may or may not be linear, isometries on function spaces that involve derivatives valued in Banach spaces.

Let $C(\K)$ be the complex Banach space of all continuous complex-valued functions defined on a compact Hausdorff space $\K$ with the supremum norm $\VK{\cdot}$. Let $\A$ be a linear subspace of $C(\K)$ and $\B$ a closed linear subspace of $(C(\Y),\V{\cdot}_Y)$ for some compact Hausdorff space $\Y$. For a non-empty compact subset $\X$ of $\K$, we denote by $\cha$ the Choquet boundary for $\A$ as a normed linear subspace of $(C(\X),\VX{\cdot})$. The Choquet boundary for $\A$ is defined as the set of all $\x\in\X$ for which the point evaluation functional $\d_{\x}\colon\A\to\C$, defined by $\d_{\x}(f)=f(\x)$ for all $f\in\A$, is an extreme point of the closed unit ball of the dual space of $\A$. The Choquet boundary for $\B$ is denoted by $\chb$. The closures of $\cha$ and $\chb$ in $\X$ and $\Y$, respectively, are denoted by $\PA$ and $\PB$. These are called the Shilov boundaries for $\A$ and $\B$, respectively. Furthermore, we assume that $\A$ and $\B$ satisfy the following conditions:
\begin{enumerate}
\item\label{(1)} There exists a surjective, complex linear map $d\colon\A\to\B$.
\item\label{(2)} The linear space $\A$ is complete with respect to the norm $\VA{\cdot}$, defined as $\VA{f}=\VX{f}+\VY{d(f)}$ for $f\in\A$.
\item\label{(3)} $\A$ contains the constant function $\unit\in C(\X)$, which maps $\x$ to $1$ for all $\x\in\X$. 
\item\label{(4)} The kernel, $\ker(d)$, of the operator $d$ satisfies $\ker(d)=\set{z\unit\in\A:z\in\C}$.
\item There exists a dense subset $\MD$ of $\PA\times\PB$ with the following properties:
\begin{enumerate}
\item\label{(5)} For each $\ve>0$, $\yz\in\PB$ with $(\xz,\yz)\in\MD$ for some $\xz\in\PA$ and an open set $N_0\subset\Y$ with $\yz\in N_0$, there exists $f_0\in\A$ such that $\VX{f_0}<\ve$, $d(f_0)(\yz)=1=\VY{d(f_0)}$ and $|d(f_0)|<\ve$ on $\PB\setminus N_0$.
\item\label{(6)} 
For each $\ve>0$, $(\xz,\yz)\in\MD$ and an open set $W_0\subset\X$ with $\xz\in W_0$, there exists $g_0\in\A$ such that $g_0(\xz)=1=\VX{g_0}$, $|g_0|<\ve$ on $\PA\setminus W_0$ and $|d(g_0)(\yz)|<\ve$.
\end{enumerate}
\item\label{(7)} Let $(\x_k,\y_k)\in\PA\times\PB$ for $k=0,1,2$.
\begin{enumerate}
\item\label{(7a)} If $\xz\not\in\set{\xo,\xt}$, there exists $h_0\in\A$ such that $h_0(\xz)=1$, $h_0(\x_j)=0$ for $j=1,2$, and $d(h_0)(\y_k)=0$ for $k=0,1,2$.
\item\label{(7b)} If $\yz\not\in\set{\yo,\yt}$, there exists $h_1\in\A$ such that $h_1(\x_k)=0$ for $k=0,1,2$, $d(h_1)(\yz)=1$ and $d(h_1)(\y_j)=0$ for $j=1,2$.
\item\label{(7c)} There exist $h_2,h_3\in\A$ such that $h_2(\xz)=1$, $d(h_2)(\yz)=0$, $h_3(\xz)=0$ and $d(h_3)(\yz)=1$.
\end{enumerate}
\end{enumerate}

In this context, it is important to highlight that the linear space $\A$ has two norms: $\VX{\cdot}$ and $\VA{\cdot}$. It is not necessarily the case that $\A$ is closed with respect to the topology induced by $\VX{\cdot}$. In contrast, $\A$ is a complex Banach space equipped with $\VA{\cdot}$ by definition. 

The following examples satisfy the conditions in \eqref{(1)} through \eqref{(7)}: the set $C^1([0,1])$ of all continuously differentiable functions on $[0,1]$; the set of all continuous extensions to $\Db$ of all analytic functions on $\Di$, which can be extended to continuous functions on $\Db$; and the set of all Gelfand transforms of analytic functions on $\Di$ whose derivatives are bounded on $\Di$. In section \ref{sect5}, we shall discuss these examples in detail.

Let $\T$ denote the unit circle of the complex number field $\C$. Given $z\in\C$, $\overline{z}$, $\Re(z)$ and $\Im(z)$ stand for the complex conjugate, the real part and the imaginary part of $z$, respectively.

The main results of this paper are as follows:

\begin{thm}\label{thm1}
Let $\A$ and $\B$ be Banach spaces with the properties from \eqref{(1)} through \eqref{(7)}. If $T\colon\A\to\A$ is a surjective isometry, then there exist a constant $c\in\T$, homeomorphisms
$\phi\colon\PA\to\PA$
and $\psi\colon\PB\to\PB$,
a continuous function $u\colon\PB\to\T$
and closed and open, possibly empty, subsets
$\PAp$ of $\PA$
and $\PBp$ of $\PB$
such that
\begin{align*}
&T(f)(\x)-T(0)(\x)
=
\begin{cases}
cf(\phi(\x))&\x\in\PAp\\[1mm]
c\ov{f(\phi(\x))}&\x\in\PA\setminus\PAp
\end{cases},
\\[1mm]
&d(T(f)-T(0))(\y)
=
\begin{cases}
u(\y)d(f)(\psi(\y))&\y\in\PBp\\[1mm]
u(\y)\ov{d(f)(\psi(\y))}&\y\in\PB\setminus\PBp
\end{cases}
\end{align*}
for all $f\in\A$, $\x\in\PA$
and $y\in\PB$.

Conversely, in the event that the operators
$T$ and $d\circ T$ are of the aforementioned forms,
then $T$ is an isometry.
\end{thm}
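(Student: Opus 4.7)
The converse is a direct verification: for $T$ of the displayed form, $|T(f)(\x)-T(g)(\x)|=|f(\phi(\x))-g(\phi(\x))|$ on $\PA$ and $|d(T(f)-T(g))(\y)|=|d(f-g)(\psi(\y))|$ on $\PB$ (since $c,u(\y)\in\T$ and complex conjugation preserve absolute values); taking suprema on the Shilov boundaries and summing yields $\V{T(f)-T(g)}_\A=\V{f-g}_\A$.

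For the forward direction, I would set $S=T-T(0)$, still a surjective isometry with $S(0)=0$; by the Mazur--Ulam theorem $S$ is $\mathbb R$-linear, so the task reduces to classifying surjective $\mathbb R$-linear isometries of $(\A,\V{\cdot}_\A)$. The embedding $f\mapsto(f|_\PA,d(f)|_\PB)$ of $\A$ into $C(\PA)\oplus_1 C(\PB)$ is isometric by the Shilov-boundary identification, so the dual picture involves measures on $\PA$ and $\PB$. The central step is to identify the extreme points of the unit ball of the real dual of $(\A,\V{\cdot}_\A)$ as exactly the functionals $f\mapsto\Re(\la f(\x))$ with $(\x,\la)\in\cha\times\T$, together with $f\mapsto\Re(\la\,d(f)(\y))$ with $(\y,\la)\in\chb\times\T$. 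Extremality follows from the Choquet-boundary definition combined with the peaking hypotheses \eqref{(5)} and \eqref{(6)}, which let me concentrate unit mass in either factor of the embedding; disjointness of the two families is ensured by the separation \eqref{(7c)}.

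The transpose $S^*$ permutes this extreme set. The key technical step is to rule out ``cross-mixing'' between the two families: should $S^*$ send some evaluation $\delta_{\xz}$ with $\xz\in\cha$ into the derivative family $\la\,\delta_{\yz}\circ d$, the identity $S(f)(\xz)=\la\, d(f)(\yz)$ would hold for every $f\in\A$; plugging in peaking functions from \eqref{(5)}--\eqref{(6)} and separators from \eqref{(7a)}--\eqref{(7c)} would contradict the norm-preservation $\V{S(f)}_\A=\V{f}_\A$. Once cross-mixing is excluded, one obtains bijections $\phi_0\colon\cha\to\cha$ and $\psi_0\colon\chb\to\chb$ together with unimodular multipliers possibly preceded by complex conjugation. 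Weak$^*$ continuity of $S^*$ extends $\phi_0,\psi_0$ to homeomorphisms $\phi,\psi$ of the Shilov boundaries and pins down the conjugating loci $\PA\setminus\PAp$ and $\PB\setminus\PBp$ as simultaneously closed and open subsets. Finally, conditions \eqref{(3)}--\eqref{(4)} applied to $S(\unit)$ force the $\PA$-multiplier to be a single constant $c\in\T$: the derivative formula gives $d(S(\unit))=0$, so $S(\unit)\in\ker(d)$ is constant, and evaluating the $\PA$-formula at $f=\unit$ identifies the constant with the multiplier. The $\PB$-multiplier, having no such constraint, remains a continuous function $u\colon\PB\to\T$. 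I expect the main obstacle to be the cross-mixing exclusion, which requires the combined force of \eqref{(5)}--\eqref{(7)} and the subtlety that $S$ is merely real-linear, so the dual analysis must track the possibility of complex conjugation from the outset.
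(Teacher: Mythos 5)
Your converse verification is correct and matches the paper's. But the forward direction contains a genuine gap at what you yourself identify as the central step: the description of the extreme points of the dual unit ball. If you embed $\A$ isometrically into the $\ell^1$-sum $C(\PA)\oplus_1 C(\PB)$, the dual norm is the $\ell^\infty$-max of the two component norms, and an extreme point of that dual ball must be extreme \emph{in each factor simultaneously}; a functional of the form $f\mapsto\Re(\la f(\x))$ that vanishes on the derivative component is not extreme (its second component, being $0$, is not extreme in the ball of $\B^*$). So the extreme functionals are not the disjoint union of an ``evaluation family'' and a ``derivative family'' but rather combined functionals $f\mapsto\la f(\x)+\mu\,d(f)(\y)$ with two independent unimodular parameters $\la,\mu\in\T$. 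Consequently the ``cross-mixing exclusion'' you flag as the main obstacle is aimed at the wrong target: there are no single-factor extreme points between which mixing could occur, and the argument you sketch for excluding it does not get off the ground.

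The paper's device for handling exactly this issue is to embed $\A$ into $C(\Z)$ with $\Z=\PA\times\PB\times\T$ via $\TF(\x,\y,\nu)=f(\x)+d(f)(\y)\nu$, which converts the sum-norm into a genuine sup-norm; the extra circle coordinate $\nu$ records the relative phase $\mu\ov{\la}$ between the two unimodular parameters, so the extreme points become simply $\la\d_{\p}$ with $\p\in\Ch{\TA}$ (Arens--Kelley), and Lemmas~\ref{lem3.1}--\ref{lem3.4} show $\MD\times\T\subset\Ch{\TA}$ using conditions \eqref{(5)}--\eqref{(7)}. The real work is then not to exclude cross-mixing but to show that the induced self-map $\Phi$ of $\Z$ (defined through $S_*$ and the set $\del$) splits coordinatewise --- that $\phi$ depends only on $\x$, $\psi$ only on $\y$, and the phase action on the $\T$-coordinate produces the unimodular weight $u$ --- which occupies Lemmas~\ref{lem4.4}--\ref{lem4.7} and Claims~\ref{lem5.3}--\ref{cl9}. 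Your remaining ingredients (Mazur--Ulam reduction to the real-linear case, the adjoint acting on extreme points, the use of \eqref{(3)}--\eqref{(4)} to force the $\PA$-multiplier to be the constant $c=T_0(\unit)$, and the clopen decomposition governing conjugation) are all sound and agree with the paper, but they rest on the flawed extreme-point picture and so do not constitute a proof as written.
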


\begin{cor}\label{cor2}
Let $\A$, $\B$ and $T$ be as in Theorem~\ref{thm1}. If $X$ is a one point set $\set{\xz}$, then there exist a constant $c\in\T$, a
continuous function
$u\colon\PB\to\T$,
a homeomorphism $\psi\colon\PB\to\PB$ and a closed and open, possibly empty, subset $\PBp$ of $\PB$ such that either 
$$
T(f)(\xz)-T(0)(\xz)=cf(\xz)
$$ 
for all $f\in\A$ or 
$$
T(f)(\xz)-T(0)(\xz)=c\ov{f(\xz)}
$$
for all $f\in\A$, and 
$$
d(T(f)-T(0))(\y)=
\begin{cases}
u(\y)d(f)(\psi(\y))&\y\in\PBp\\[1mm]
u(\y)\ov{d(f)(\psi(\y))}&\y\in\PB\setminus\PBp
\end{cases}
$$
for all $f\in\A$ and $\y\in\PB$.

Conversely, if the operators $T$ and $d\circ T$ are
of the aforementioned forms,
then $T$ is an isometry.
\end{cor}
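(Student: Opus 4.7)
The plan is to apply Theorem~\ref{thm1} and reduce the two alternatives in the statement to the case distinction on the possible clopen subsets of the singleton $\PA=\set{\xz}$. The one nontrivial observation is that $\PA$ is indeed a singleton; everything else is an immediate specialization.

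To see this, note that since $\X=\set{\xz}$ and $\A$ contains $\unit$ by condition~\eqref{(3)}, restriction to $\X$ identifies $\A$, endowed with the seminorm $\VX{\cdot}$, with $\C$ equipped with the absolute value. The dual is canonically $\C$, with closed unit ball the unit disk, whose extreme points form the unit circle. Under this identification the evaluation $\d_{\xz}$ is the identity functional, corresponding to $1\in\C$, which is an extreme point; hence $\xz\in\cha$. Since $\cha\subseteq\X=\set{\xz}$, we conclude $\cha=\set{\xz}$ and so $\PA=\set{\xz}$.

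I then apply Theorem~\ref{thm1} to produce the constant $c\in\T$, homeomorphisms $\phi\colon\PA\to\PA$ and $\psi\colon\PB\to\PB$, a continuous unimodular function $u\colon\PB\to\T$, and clopen subsets $\PAp\subseteq\PA$, $\PBp\subseteq\PB$. Because $\PA$ is a singleton, $\phi$ is forced to be the identity, i.e., $\phi(\xz)=\xz$, and the only clopen subsets of $\set{\xz}$ are $\emptyset$ and $\set{\xz}$. If $\PAp=\set{\xz}$, Theorem~\ref{thm1} yields $T(f)(\xz)-T(0)(\xz)=cf(\phi(\xz))=cf(\xz)$ for all $f\in\A$, which is the first alternative; if $\PAp=\emptyset$, it yields $T(f)(\xz)-T(0)(\xz)=c\ov{f(\phi(\xz))}=c\ov{f(\xz)}$, which is the second. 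The formula for $d(T(f)-T(0))$ on $\PB$ is identical to the one in Theorem~\ref{thm1}, and is carried over verbatim.

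For the converse, any $T$ whose action on $\xz$ and on $d(T(f)-T(0))$ is of the forms listed in the statement fits the general form of Theorem~\ref{thm1}, upon taking $\phi$ to be the identity on $\PA=\set{\xz}$ and choosing $\PAp=\set{\xz}$ or $\PAp=\emptyset$ according to which alternative holds. The converse direction of Theorem~\ref{thm1} then ensures that $T$ is an isometry. I do not expect any real obstacle: once $\PA=\set{\xz}$ is established, the entire argument is a mechanical case distinction on the clopen subset $\PAp$ of a one-point set.
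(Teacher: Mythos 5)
Your proposal is correct and follows essentially the same route as the paper: the paper also reduces to the formulas of Theorem~\ref{thm1} (via its equations \eqref{lem5.6.1} and \eqref{cl9.1}), observes that $\emptyset\neq\cha\subset\PA\subset\X=\set{\xz}$ forces $\PA=\set{\xz}$ and hence $\phi(\xz)=\xz$, and then splits into the two alternatives according to the sign of $\ez(\xz)$, which is exactly your case distinction on $\PAp\in\set{\emptyset,\set{\xz}}$. Your explicit verification that $\d_{\xz}$ is an extreme point is a slightly more self-contained justification of the nonemptiness of $\cha$ that the paper simply takes for granted, but the argument is the same.
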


The following is an outline of the proof. The initial step is to embed the Banach space $(\A,\VA{\cdot})$ isometrically into $(C(\Z),\Vinf{\cdot})$ for some compact Hausdorff space $\Z$,
where $\Vinf{\cdot}$ denotes the supremum norm on the set $\Z$. Let $\TA$ denote the isometric embedding image of $\A$ in $C(\Z)$. The Choquet boundary for $\TA$ is then investigated. When $T$ is a surjective isometry from $\A$ onto itself, it naturally induces a surjective \textit{real linear} isometry $S$ on $\TA$ by utilizing the Mazur--Ulam theorem \cite{maz,nic}. Subsequently, we introduce a kind of adjoint operator, designated as $S_*$, of $S$
on the dual space $\TA^*$ of $\TA$. In the case where $S$ is complex linear, then the adjoint operator of $S$ is well-defined. However, in the context of the real linearity, an adjustment to the definition of the adjoint operator of $S$ is required. We introduce a set $\del$, which is closely related to the set $\ext$ of all extreme points of the closed unit ball $\Aos$ of the dual space $\TA^*$ of $(\TA,\Vinf{\cdot})$. It is shown that the operator $S_*$ preserves the set $\del$. We then demonstrate the form of $S$ by employing the equation $S_*(\del)=\del$. It can be concluded that surjective isometries $T$ on $\A$ are of the form described in Theorem~\ref{thm1} and Corollary~\ref{cor2}.

In the remainder of this paper, we proceed under the assumption that $\A$ and $\B$ satisfy the properties outlined in this section.

\section{An embedding of $\A$ and its Choquet boundary}

We shall introduce a compact Hausdorff space $\Z$ for which the Banach space $\A$ is isometrically embedded into $C(\Z)$. Subsequently, we investigate the Choquet boundary for the isometric image of $\A$. 

For each $f\in\A$, $\x\in\X$, $\y\in\Y$ and $\nu\in\T$, we define
\begin{equation}\label{tilde}
\TF(x,y,\nu)=f(\x)+d(f)(\y)\nu.
\end{equation}
It can be demonstrated that $\TF$ is a continuous complex-valued function defined on a compact Hausdorff space $\XYT$ with respect to the product topology. 
For the sake of simplicity in notation, we set 
$$
\Z=\PA\times\PB\times\T
$$ 
and $\VLinf{\TF}=\sup_{(\x,\y,\nu)\in\Z}|\TF(\x,\y,\nu)|$. We shall prove that
\begin{equation}\label{tildenorm}
\VLinf{\TF}
=\VX{f}+\VY{d(f)}
\end{equation}
for all $f\in\A$. Fix an arbitrary $f\in\A$. For each $(\x,\y,\nu)\in\Z$, we derive from \eqref{tilde} that
\[
|\TF(\x,\y,\nu)|
\leq
|f(\x)|+|d(f)(\y)|
\leq
\VX{f}+\VY{d(f)},
\]
which shows that $\VLinf{\TF}\leq\VX{f}+\VY{d(f)}$. Conversely, there exist $\xo\in\cha$ and $\yo\in\chb$ such that $\VX{f}=|f(\xo)|$ and $\VY{d(f)}=|d(f)(\yo)|$, since $\cha$ and $\chb$ are boundaries
for $\A$ and $\B$, respectively (see \cite[Theorem~2.3.8]{fle1} and \cite[Proposition~2.20]{hat1}). We can choose $\nu_1\in\T$ so that $|f(\xo)|+|d(f)(\yo)|=|f(\xo)+d(f)(\yo)\nu_1|$. In fact, there exist $\lao,\muo\in\T$ such that
$f(\xo)=|f(\xo)|\lao$ and $d(f)(\yo)=|d(f)(\yo)|\muo$.
Let us set $\nu_1=\lao\ov{\muo}$,
which yields $\nu_1\in\T$ and
$d(f)(\yo)\nu_1=|d(f)(\yo)|\lao$.
We conclude that
\[
|f(\xo)+d(f)(\yo)\nu_1|
=\bigl||f(\xo)|\lao+|d(f)(\yo)|\lao\bigr|
=|f(\xo)|+|d(f)(\yo)|,
\]
as claimed. Then we have
\[
\VX{f}+\VY{d(f)}
=|f(\xo)|+|d(f)(\yo)|
=|f(\xo)+d(f)(\yo)\nu_1|=|\TF(\xo,\yo,\nu_1)|
\leq\VLinf{\TF},
\]
which yields the desired result of \eqref{tildenorm}.

For each $f\in\A$, we have that $\VA{f}=\VX{f}+\VY{d(f)}$ by definition. By combining equation \eqref{tildenorm} with the previous equation, we obtain the following result:
\begin{equation}\label{dnorm}
\VA{f}=\VLinf{\TF}
\qq (f\in\A).
\end{equation}
We define a subset $\TA$ of $C(\Z)$ as
\[
\TA=\set{\TF\in C(\Z):f\in\A}.
\]
It can be seen that $\TA$ is a linear subspace of $C(\Z)$ by \eqref{tilde}. Let us define a mapping $U$ from $(\A,\VA{\cdot})$ to $(\TA,\Vinf{\cdot})$ as
\begin{equation}\label{U}
U(f) = \TF
\qq(f\in\A).
\end{equation}
Equations \eqref{tilde} and \eqref{dnorm} show that $U$ is a surjective complex linear isometry. Therefore, we conclude that $\TA$ is a closed linear subspace of $C(\Z)$.

By equation \eqref{tilde} and conditions \eqref{(3)} and \eqref{(4)}, we see that the function $\wt{\unit}\in\TA$ is a constant function with a single value of $1$, defined on $\XYT$.

Let $\xi$ be a bounded linear functional on $\TA$ with the operator norm $\V{\xi}$. By the Hahn--Banach theorem, it can be extended to a bounded linear functional, $\xi_1$, on $C(\Z)$ with $\V{\xi_1}=\V{\xi}$. By applying the Riesz representation theorem, there exists a regular Borel measure, denoted by $\sigma$, on $\Z$ such that $\V{\si}=\V{\xi_1}$ and $\D\xi_1(u)=\int_{\Z}u\,d\si$ for all $u\in C(\Z)$. Here, $\V{\sigma}$ represents the total variation of $\si$. The measure $\si$ is referred to as a representing measure for $\xi$. It should be noted that there is no unique determining factor for this measure, as it is dependent on the Hahn--Banach extension of $\xi$.

By the following three lemmas, we will prove that  any representing measure for a point evaluation, $\d_{\p}$, is the Dirac measure concentrated at the point $\p\in\MD\times\T$. We will begin by demonstrating that each representing measure for $\d_{\p}$ is concentrated on the second coordinate of the point $\p$.

\begin{lem}\label{lem3.1}
Let $\pz=(\xz,\yz,\nuz)\in\bta$ and let $\si$ be a representing measure for $\d_{\pz}$. Then $\sigma(\PA\times\set{\yz}\times\T)=\si(\Z)=1$.
\end{lem}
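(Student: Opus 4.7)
The plan is to establish, in order: (i) $\si$ is a probability measure on $\Z$; (ii) $\si$ assigns full mass to every ``tube'' $\PA \times W \times \T$ for $W$ an open neighborhood of $\yz$ in $\PB$; (iii) by regularity, $\si(\PA \times \set{\yz} \times \T) = 1$.

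For (i), I would note that $\tio \in \TA$ is the constant function $1$ on $\Z$, so $\d_{\pz}(\tio) = 1 = \VLinf{\tio}$ gives $\V{\d_{\pz}} = 1$; this norm is preserved under the Hahn--Banach extension $\xi_1$ and the Riesz representation, yielding $\V{\si} = |\si|(\Z) = 1$. On the other hand $\si(\Z) = \int_{\Z}\tio\,d\si = \xi_1(\tio) = \d_{\pz}(\tio) = 1$. The equality $|\si(\Z)| = |\si|(\Z)$ forces the polar factor $h$ in $d\si = h\,d|\si|$ to be the constant $1$, so $\si$ is non-negative and $\si(\Z) = 1$.

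For (ii), fix an open neighborhood $W$ of $\yz$ in $\PB$, written as $W = N_0 \cap \PB$ for some open $N_0 \subset \Y$. Given $\ve > 0$, condition~\eqref{(5)} supplies $f_0 \in \A$ with $\VX{f_0} < \ve$, $d(f_0)(\yz) = 1 = \VY{d(f_0)}$, and $|d(f_0)| < \ve$ on $\PB \setminus N_0$. A direct bound from \eqref{tilde} gives $|\TF_0| \le 1 + \ve$ on $\Z$ and $|\TF_0| < 2\ve$ on $\PA \times (\PB \setminus W) \times \T$. The key identity
\[
\ov{\nuz}\,\TF_0(\pz) = \ov{\nuz}\,f_0(\xz) + 1 = \int_\Z \ov{\nuz}\,\TF_0 \, d\si
\]
has absolute value at least $1 - \ve$. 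Splitting the integral over $\PA \times W \times \T$ and its complement and using $\si(\Z) = 1$ yields
\[
1 - \ve \le (1+\ve)\,\si(\PA \times W \times \T) + 2\ve,
\]
so $\si(\PA \times W \times \T) \ge (1-3\ve)/(1+\ve)$. Letting $\ve \to 0^+$ gives $\si(\PA \times W \times \T) = 1$. The tube lemma (compactness of $\PA \times \T$) shows such tubes are cofinal among open neighborhoods of $\PA \times \set{\yz} \times \T$ in $\Z$, so outer regularity of $\si$ delivers (iii).

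The main obstacle is the peak-function estimate in (ii): condition~\eqref{(5)} is designed precisely to localize the measure onto the $\yz$-slice, but the twist by $\ov{\nuz}$ is essential. Without it, $\TF_0(\pz) = f_0(\xz) + \nuz$ could have phase nearly opposite to $f_0(\xz)$, degrading the lower bound on $|\int \TF_0\,d\si|$; the unimodular rotation $\ov{\nuz}$ puts the dominant ``$1$'' term on the positive real axis and lets the small perturbation $\ov{\nuz}f_0(\xz)$ only cost an additive $\ve$.
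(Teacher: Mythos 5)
Your proposal is correct and follows essentially the same route as the paper: establish that $\si$ is a probability measure via $\d_{\pz}(\tio)=1=\V{\d_{\pz}}$, use the peak function from condition~\eqref{(5)} to show every tube $\PA\times N_0\times\T$ over an open neighborhood of $\yz$ carries full mass, and conclude by regularity. The rotation by $\ov{\nuz}$ is only a cosmetic repackaging of the paper's reverse-triangle-inequality estimate $\left|\int_{\Z}\TF_0\,d\si\right|\geq|\nuz|-|f_0(\xz)|>1-\ve$, and your explicit appeal to the tube lemma merely spells out the regularity step the paper leaves implicit.
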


\begin{proof}
Note that $\si$ is a regular Borel measure such that $\V{\sigma}=\V{\d_{\pz}}$ and $\D\d_{\pz}(\TF)=\int_{\Z} \TF\,d\si$ for every $\TF\in\TA$. Since $\d_{\pz}(\widetilde{\unit})=1=\V{\d_{\pz}}$ by equations \eqref{tilde} and \eqref{tildenorm} with condition \eqref{(4)}, any representing measure for $\d_{\pz}$ is a probability measure.
This is proved in, for example, \cite[p. 81]{bro}.
In particular, we deduce that $\si(\Z)=\V{\si}=\V{\d_{\pz}}=1$.
Let $\ve>0$ and let $N_0$ be an open set in $\Y$ with $\yz\in N_0$. Let us define $N_0^c=\PB\setminus N_0$. We shall prove that $\si(\PA\times N_0^c\times\T)=1$. We need to consider the case when $N_0^c\neq\emptyset$
by the fact that $\si(\PA\times\PB\times\T)=\si(\Z)=1$. Then, there exists $f_0\in\A$ such that \begin{equation}\label{lem3.1.1}
\VX{f_0}<\ve,
\q
d(f_0)(\yz)=1=\VY{d(f_0)}
\qq\mbox{and}\qq
|d(f_0)|<\ve
\q\mbox{on}\q N_0^c
\end{equation}
by condition \eqref{(5)}. We infer from equations \eqref{tilde} and \eqref{lem3.1.1} that 
\begin{equation}\label{lem3.1.2} 
\VLinf{\TF_0}<\ve+1
\qq\mbox{and}\qq
|\TF_0|<2\ve\q\mbox{on}\q\PA\times N_0^c\times\T.
\end{equation}
Having in mind that $\si$ is a representing measure for $\d_{\pz}$, we obtain
\[
\int_{\Z}\TF_0\,d\si
=\d_{\pz}(\TF_0)
=\TF_0(\xz,\yz,\nuz)
=f_0(\xz)+d(f_0)(\yz)\nuz
=f_0(\xz)+\nuz,
\]
which implies that $\D\left|\int_{\Z}\TF_0\,d\si\right|\geq|\nuz|-|f_0(\xz)|>1-\ve$ by \eqref{lem3.1.1}. We derive from \eqref{lem3.1.2} that
\begin{align*}
1-\ve
&<
\left|\int_{\Z}\TF_0\,d\si\right|
\leq\left|\int_{\PA\times N_0\times\T}\TF_0\,d\si\right|
+\left|\int_{\PA\times N_0^c\times\T}\TF_0\,d\si\right|\\
&<(\ve+1)\si(\PA\times N_0\times\T)
+2\ve\si(\PA\times N_0^c\times\T).
\end{align*}
Since $\ve>0$ is chosen arbitrarily, we deduce that $1\leq\si(\PA\times N_0\times\T)$. Since $\si(\PA\times N_0\times\T)\leq\si(\Z)=1$, we obtain that $\si(\PA\times N_0\times\T)=1$. In light of the fact that $\si$ is a regular measure, we conclude that $\si(\PA\times\set{\yz}\times\T)=1$.
\end{proof}

Next, we prove that any representing measure for $\d_{\p}$ is concentrated on the first coordinate of the point $\p\in\MD\times\T$.

\begin{lem}\label{lem3.2}
Let $\pz=(\xz,\yz,\nuz)\in\bta$ and let $\si$ be a representing measure for $\d_{\pz}$. Then $\sigma(\set{\xz}\times\set{\yz}\times\T)=\si(\Z)=1$.
\end{lem}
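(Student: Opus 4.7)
The plan is to mirror the proof of Lemma~\ref{lem3.1}, substituting condition \eqref{(6)} for \eqref{(5)} and using Lemma~\ref{lem3.1} itself as a preparatory reduction. Since $\pz=(\xz,\yz,\nuz)\in\bta$ entails $(\xz,\yz)\in\MD$, Lemma~\ref{lem3.1} is applicable and gives $\si(\PA\times\set{\yz}\times\T)=\si(\Z)=1$. Consequently, any integral $\int_{\Z}\TF\,d\si$ collapses to one over the slice $\PA\times\set{\yz}\times\T$, on which the integrand takes the form $\TF(\x,\yz,\nu)=f(\x)+d(f)(\yz)\nu$.

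Next, I would fix $\ve>0$ and an open set $W_0\subset\X$ with $\xz\in W_0$, and invoke condition \eqref{(6)} to obtain $g_0\in\A$ satisfying $g_0(\xz)=1=\VX{g_0}$, $|g_0|<\ve$ on $\PA\setminus W_0$, and $|d(g_0)(\yz)|<\ve$. On the slice $\PA\times\set{\yz}\times\T$, the triangle inequality gives $|\TG_0(\x,\yz,\nu)|\leq|g_0(\x)|+|d(g_0)(\yz)|<|g_0(\x)|+\ve$; hence $|\TG_0|\leq 1+\ve$ on $W_0\times\set{\yz}\times\T$ and $|\TG_0|<2\ve$ on $(\PA\setminus W_0)\times\set{\yz}\times\T$.

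For the final step, I would evaluate $\D\int_{\Z}\TG_0\,d\si=\d_{\pz}(\TG_0)=g_0(\xz)+d(g_0)(\yz)\nuz=1+d(g_0)(\yz)\nuz$, so that $\bigl|\int_{\Z}\TG_0\,d\si\bigr|>1-\ve$. Splitting the integral over the slice according to $W_0$ and its complement in $\PA$ yields
\[
1-\ve<(1+\ve)\si(W_0\times\set{\yz}\times\T)+2\ve\bigl(1-\si(W_0\times\set{\yz}\times\T)\bigr).
\]
Letting $\ve\to 0^+$ forces $\si(W_0\times\set{\yz}\times\T)=1$ for every open neighborhood $W_0$ of $\xz$, and the regularity of $\si$ then gives $\si(\set{\xz}\times\set{\yz}\times\T)=1$, exactly as in the concluding step of Lemma~\ref{lem3.1}. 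The main subtlety, and essentially the only reason this is not a verbatim repetition of the previous proof, is that condition \eqref{(6)} provides no control on $\VY{d(g_0)}$, so no uniform estimate of $\TG_0$ on all of $\Z$ is available; this is precisely the obstacle that is circumvented by first invoking Lemma~\ref{lem3.1} to localize $\si$ to the slice $\PA\times\set{\yz}\times\T$, where only the single value $d(g_0)(\yz)$ matters.
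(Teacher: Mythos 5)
Your proposal is correct and follows essentially the same route as the paper's proof: invoke Lemma~\ref{lem3.1} to localize $\si$ to the slice $\PA\times\set{\yz}\times\T$, apply condition \eqref{(6)} to produce $g_0$ peaking at $\xz$ with $|d(g_0)(\yz)|<\ve$, split the integral over $W_0$ and its complement, and let $\ve\to 0^+$ before appealing to regularity. Your closing remark about why the localization step is needed (no control on $\VY{d(g_0)}$ from \eqref{(6)}) correctly identifies the one point where the argument genuinely differs from that of Lemma~\ref{lem3.1}.
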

\begin{proof}
Let $\ve>0$ and let $W_0$ be an arbitrary open set in $\X$ with $\xz\in W_0$. Define $W_0^c=\PA\setminus W_0$. Then, by condition \eqref{(6)}, there exists $g_0\in\A$ that satisfies the following:
\begin{equation}\label{lem3.2.1}
g_0(\xz)=1=\VX{g_0},
\q
|g_0|<\ve
\q\mbox{on}\q W_0^c
\qq\mbox{and}\qq
|d(g_0)(\yz)|<\ve.
\end{equation}
We derive from \eqref{tilde} and \eqref{lem3.2.1} that
\begin{equation}\label{lem3.2.2}
|\TG_0|<1+\ve
\q\mbox{on}\q\PA\times\set{\yz}\times\T
\qq\mbox{and}\qq
|\TG_0|<2\ve
\q\mbox{on}\q W_0^c\times\set{\yz}\times\T.
\end{equation}
The equation
$\si(\PA\times\set{\yz}\times\T)=\si(\Z)=1$ is the conclusion of Lemma~\ref{lem3.1}. Since $\si$ is a representing measure for $\d_{\pz}$, we obtain
\[
\int_{\PA\times\set{\yz}\times\T}\TG_0\,d\si
=\int_{\Z}\TG_0\,d\si
=\d_{\pz}(\TG_0)
=g_0(\xz)+d(g_0)(\yz)\nuz
=1+d(g_0)(\yz)\nuz,
\]
where we have used \eqref{lem3.2.1}. Keeping in mind that $|d(g_0)(\yz)|<\ve$ by \eqref{lem3.2.1}, we infer from \eqref{lem3.2.2} that 
\begin{align*}
1-\ve
&<
\left|\int_{\PA\times\set{\yz}\times\T}\TG_0\,d\si\right|
\leq\left|\int_{W_0\times\set{\yz}\times\T}\TG_0\,d\si\right|
+\left|\int_{W_0^c\times\set{\yz}\times\T}\TG_0\,d\si\right|\\
&<(1+\ve)\si(W_0\times\set{\yz}\times\T)
+2\ve\si(W_0^c\times\set{\yz}\times\T).
\end{align*}
Letting $\ve\to+0$, we obtain the result that $1\leq\si(W_0\times\set{\yz}\times\T)\leq\si(\Z)=1$. Consequently, $\si(W_0\times\set{\yz}\times\T)=1$ for all open neighborhoods $W_0$ of $\xz$. Since $\si$ is a regular measure, we conclude that $\si(\set{\xz}\times\set{\yz}\times\T)=1$. 
\end{proof}

We are now in a position to show that any representing measure for the functional $\d_{\p}$ is the Dirac measure concentrated at the point $\p\in\MD\times\T$.

\begin{lem}\label{lem3.3}
If $\pz\in\bta$, then the Dirac measure concentrated at $\pz$ is the unique representing measure for $\d_{\pz}$.
\end{lem}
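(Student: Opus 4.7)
The plan is to use Lemma~\ref{lem3.2} to reduce the problem to understanding $\si$ on the circle $C:=\set{\xz}\times\set{\yz}\times\T$, and then to exploit condition~\eqref{(7c)} together with strict convexity of the closed unit disc $\Db$ to force $\si$ to concentrate at the single point $\pz$.

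By Lemma~\ref{lem3.2}, $\si$ is a probability measure supported on $C$. On $C$ every element of $\TA$ takes the form $(\xz,\yz,\nu)\mapsto f(\xz)+d(f)(\yz)\nu$, so the representing-measure identity $\int_\Z\TF\,d\si=\TF(\pz)$ reduces, using $\si(C)=1$, to
\[
f(\xz)+d(f)(\yz)\int_C\nu\,d\si=f(\xz)+d(f)(\yz)\nuz
\]
for every $f\in\A$. Choosing $f=h_3$ from condition~\eqref{(7c)}, which provides $d(h_3)(\yz)=1$, yields $\int_C\nu\,d\si=\nuz$.

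It remains to show that this identity forces $\si=\delta_{\pz}$. Let $\tau$ denote the push-forward of $\si$ under the projection of $C$ onto the third coordinate; then $\tau$ is a probability measure on $\T\subset\Db$ with $\int_\T\nu\,d\tau=\nuz$. The chain
\[
1=|\nuz|=\left|\int_\T\nu\,d\tau\right|\leq\int_\T|\nu|\,d\tau=1
\]
holds with equality throughout, so strict convexity of $\Db$ forces $\nu=\nuz$ for $\tau$-almost every $\nu$; equivalently $\tau=\delta_{\nuz}$, and hence $\si(\set{\pz})=1$, so that $\si$ is the Dirac measure at $\pz$. The main obstacle is precisely this last strict-convexity step, since the reduction to the circle $C$ and the computation of $\int_C\nu\,d\si$ are direct consequences of Lemma~\ref{lem3.2} together with condition~\eqref{(7c)}.
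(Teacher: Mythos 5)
Your proposal is correct and follows essentially the same route as the paper: reduce to the circle $\set{\xz}\times\set{\yz}\times\T$ via Lemma~\ref{lem3.2}, use condition~\eqref{(7c)} to compute $\int\nu\,d\si=\nuz$, and then conclude concentration at $\nuz$. The only cosmetic difference is that you invoke strict convexity of $\Db$ for the last step, whereas the paper carries out the equivalent elementary computation $\int(1-\Re(\ov{\nuz}\nu))\,d\si=0$ with $1-\Re(\ov{\nuz}\nu)>0$ off $\nuz$.
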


\begin{proof}
Let $\pz=(\xz,\yz,\nuz)\in\bta$ and let $\si$ be a representing measure for $\d_{\pz}$. It then follows from Lemma~\ref{lem3.2} that $\si(\set{\xz}\times\set{\yz}\times\T)=\si(\Z)=1$. We shall prove that $\si(\set{\xz}\times\set{\yz}\times\set{\nuz})=1$. By virtue of condition \eqref{(7c)}, there exists $h_0\in\A$ such that $h_0(\xz)=0$ and $d(h_0)(\yz)=1$. Therefore, we conclude that $\TH_0(\xz,\yz,\nu)=\nu$ for all $\nu\in\T$ by \eqref{tilde}. We deduce that
\[
\nuz
=\d_{\pz}(\TH_0)
=\int_{\Z}\TH_0\,d\si
=\int_{\set{\xz}\times\set{\yz}\times\T}\TH_0\,d\si
=\int_{\set{\xz}\times\set{\yz}\times\T}\nu\,d\si(\nu).
\]
This implies that $\D\int_{\set{\xz}\times\set{\yz}\times\T}(\nuz-\nu)\,d\si(\nu)=0$, and therefore $\D\int_{\set{\xz}\times\set{\yz}\times\T}(1-\ov{\nuz}\nu)\,d\si(\nu)=0$. Because $\si$ is a probability measure, we have that
\[
\int_{\set{\xz}\times\set{\yz}\times\T}(1-\Re(\ov{\nuz}\nu))\,d\si(\nu)
=\Re\int_{\set{\xz}\times\set{\yz}\times\T}(1-\ov{\nuz}\nu)\,d\si(\nu)=0.
\]
Since $1-\Re(\ov{\nuz}\nu)>0$ for all $\nu\in\T\setminus\set{\nuz}$, the last equations imply that $\si(\set{\xz}\times\set{\yz}\times(\T\setminus\set{\nuz}))=0$, which yields $\si(\set{\xz}\times\set{\yz}\times\set{\nuz})=\si(\set{\xz}\times\set{\yz}\times\T)=1$. We thus conclude that $\si$ is the Dirac measure concentrated at the point $\pz=(\xz,\yz,\nuz)$.
\end{proof}

We shall now prove a property of the Choquet boundary $\chta$ of the Banach space $\TA$. This property is deeply connected with the set $\ext$.

\begin{lem}\label{lem3.4}
The set $\MD\times\T$ is a subset of $\chta$. In particular, $\chta$ is a dense subset of $\Z$.
\end{lem}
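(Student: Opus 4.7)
The plan is to deduce from Lemma~\ref{lem3.3} that for every $\pz\in\bta$ the point evaluation $\d_{\pz}$ is an extreme point of $\Aos$; the density assertion will then follow immediately from the density of $\MD$ in $\PA\times\PB$, hence of $\bta$ in $\Z=\PA\times\PB\times\T$.

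Fix $\pz=(\xz,\yz,\nuz)\in\bta$ and suppose a convex decomposition $\d_{\pz}=(\xi_1+\xi_2)/2$ with $\xi_1,\xi_2\in\Aos$. First I would test this equation on $\wt{\unit}\in\TA$: since $\d_{\pz}(\wt{\unit})=1$ while $|\xi_j(\wt{\unit})|\leq\V{\xi_j}\leq 1$, the averaging equality forces $\xi_1(\wt{\unit})=\xi_2(\wt{\unit})=1$ and hence $\V{\xi_j}=1$ for each $j$. Next I would extend each $\xi_j$ by the Hahn--Banach theorem to a norm-one functional on $C(\Z)$ and apply the Riesz representation theorem to obtain regular Borel measures $\mu_1,\mu_2$ on $\Z$ with $\V{\mu_j}=\mu_j(\Z)=1$, i.e., probability measures representing $\xi_j$. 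Then $\mu=(\mu_1+\mu_2)/2$ is a probability measure that represents $\d_{\pz}$ on $\TA$, so Lemma~\ref{lem3.3} yields $\mu=\d_{\pz}$. Since the Dirac measure is an extreme point of the set of probability measures on $\Z$, I would conclude $\mu_1=\mu_2=\d_{\pz}$, whence $\xi_1=\xi_2=\d_{\pz}$ on $\TA$, proving $\pz\in\chta$. The final assertion is then immediate: the inclusion $\bta\subset\chta$ combined with the density of $\bta$ in $\Z$ shows $\chta$ is dense in $\Z$.

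The only (mild) obstacle is careful bookkeeping of norms: one must verify that the Hahn--Banach extensions have norm exactly $1$ (ensured by the $\wt{\unit}$ test) so that the Riesz representatives have total variation and total mass both equal to $1$, making the average $\mu$ a genuine representing measure for $\d_{\pz}$ in the sense required by Lemma~\ref{lem3.3}. Once this is in place, the extremality of a Dirac measure among probability measures---two probabilities whose average is supported at a single point must each be supported there---closes the argument.
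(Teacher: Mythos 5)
Your argument is correct and follows essentially the same route as the paper: test the decomposition on $\wt{\unit}$ to force $\V{\xi_j}=\xi_j(\wt{\unit})=1$, pass to probability representing measures, average them, invoke Lemma~\ref{lem3.3} to identify the average with the Dirac measure at $\pz$, and use positivity to conclude each $\mu_j$ is that Dirac measure. The paper's proof is the same in every essential step, so no further comparison is needed.
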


\begin{proof}
Let us fix $\p\in\bta$ arbitrarily
and prove that $\d_{\p} \in \ext$.
Let $\xio,\xit\in\TA_1^*$ be such that
$\d_{\p}=(\xio+\xit)/2$. 
By equation \eqref{tilde} and condition \eqref{(4)},
we have $\xio(\wt{\unit})+\xit(\wt{\unit})=2\d_{\p}(\wt{\unit})=2$.
As $\xi_j\in\TA_1^*$, we obtain $|\xi_j(\wt{\unit})|\leq1$.
Consequently, $\xi_j(\wt{\unit})=1=\V{\xi_j}$ for $j=1,2$.
Let $\si_j$ be a representing measure for $\xi_j$.
That is, $\D\xi_j(\TF)=\int_{\Z}\TF\,d\si_j$ for $\TF\in\TA$.
Therefore, we conclude that $\si_j$ is a probability measure
(see \cite[p. 81]{bro}).
Since $(\si_1+\si_2)/2$ is also a representing measure for $\d_{\p}$,
we derive from Lemma~\ref{lem3.3}
that $(\si_1+\si_2)/2$ is the Dirac measure, $\tau_{\p}$,
concentrated at $\p$. 
Since $\si_j$ is a positive measure, it follows that
$\si_j(G)=0$ for each
Borel set $G$ that does not contain the point $p$.
Therefore, we conclude that $\si_j=\tau_{\p}$ for $j=1,2$.
Consequently, we have $\xio=\d_{\p}=\xit$.
It follows that $\d_{\p}$ is an extreme point of $\TA_1^*$,
and thus $\bta\subset\chta$ as asserted.
Having in mind that $\MD$ is a dense subset of
$\PA\times\PB$
with $\bta\subset\chta\subset\Z$,
we conclude that $\chta$ is dense in $\Z$.
\end{proof}

We define a topological subspace $\del$ of $\Aos$ with the weak*-topology as
\[
\del=\set{\la\d_{\p}:\la\in\T,\,\p\in\Z}.
\]
The Arens--Kelley theorem states that
\begin{equation}\label{AK}
\ext=\set{\la\d_{\p}:\la\in\T,\ \p\in\chta}
\end{equation}
(see \cite[Corollary~2.3.6]{fle1} and \cite[Corollary~2.18]{hat1}).
By applying Lemma~\ref{lem3.4} to the last equation,
we obtain the following result:
\[
\set{\la\d_{\p}:\la\in\T, \ \p\in\bta}\subset\ext\subset\del.
\]
Define a mapping $\h\colon\T\times\Z\to\del$ as
\begin{equation}\label{h}
\h(\la,\p)=\la\d_{\p}
\qq((\la,\p)\in\T\times\Z).
\end{equation}

The following separation property for $\Z$ is a direct consequence of condition \eqref{(7)} and equation \eqref{tilde}, which will be utilized
in proving the next lemma.

\begin{prop}\label{prop3.5}
Let $\p_k=(\x_k,\y_k,\nu_k)\in\Z$ for $k=0,1,2$.
\begin{enumerate}
\item If $\xz\not\in\set{\xo,\xt}$, then there exists $f_0\in\A$ such that $\TF_0(\pz)=1$ and $\TF_0(\po)=0=\TF_0(\pt)$.
\item If $\yz\not\in\set{\yo,\yt}$, then there exists $f_1\in\A$ such that $\TF_1(\pz)=\nuz$ and $\TF_1(\po)=0=\TF_1(\pt)$.
\end{enumerate}
In particular, if $\pz\neq\po$, then there exists $f_2\in\A$ such that $\TF_2(\pz)\neq\TF_2(\po)$.
\end{prop}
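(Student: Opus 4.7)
The proposition is essentially a translation of conditions \eqref{(7a)}, \eqref{(7b)}, \eqref{(7c)} from statements about $\A$ and $\B$ into statements about the tilde-functions on $\Z$, using the defining formula $\TF(\x,\y,\nu)=f(\x)+d(f)(\y)\nu$. My plan is to prove each item by quoting the appropriate clause of \eqref{(7)} and substituting into this formula.

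For (1), given $\xz\notin\{\xo,\xt\}$, apply \eqref{(7a)} to obtain $h_0\in\A$ with $h_0(\xz)=1$, $h_0(\x_j)=0$ for $j=1,2$, and $d(h_0)(\y_k)=0$ for $k=0,1,2$. Substituting into the definition of $\TH_0$ gives $\TH_0(\pz)=1+0\cdot\nuz=1$ and $\TH_0(\p_j)=0+0\cdot\nu_j=0$ for $j=1,2$; take $f_0=h_0$. The proof of (2) is parallel: apply \eqref{(7b)} to get $h_1\in\A$ vanishing at $\x_0,\x_1,\x_2$ with $d(h_1)(\yz)=1$ and $d(h_1)(\y_j)=0$ for $j=1,2$, so that $\TH_1(\pz)=0+1\cdot\nuz=\nuz$ and $\TH_1(\p_j)=0$.

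For the final assertion, given $\pz\neq\po$, split into three cases by which coordinate differs. If $\xz\neq\xo$, I would set $\xt:=\xo$ (and $\y_2, \nu_2$ arbitrary) and apply (1) to obtain $\TF_2(\pz)=1\neq 0=\TF_2(\po)$. If $\xz=\xo$ but $\yz\neq\yo$, set $\yt:=\yo$ and apply (2) to get $\TF_2(\pz)=\nuz\neq 0=\TF_2(\po)$, using $\nuz\in\T$. Finally, if $\xz=\xo$ and $\yz=\yo$ but $\nuz\neq\nuo$, invoke \eqref{(7c)} to produce $h_3\in\A$ with $h_3(\xz)=0$ and $d(h_3)(\yz)=1$; then $\TH_3(\pz)=\nuz$ while $\TH_3(\po)=\nuo$ (since $\xo=\xz$ and $\yo=\yz$), so these two values differ.

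There is no real obstacle: the entire proposition is a direct computation once the correct clause of \eqref{(7)} has been identified, so the only thing to be careful about is the bookkeeping in the three-case split of the last assertion, in particular handling the case where only the $\T$-coordinate differs (which requires \eqref{(7c)} rather than \eqref{(7a)} or \eqref{(7b)}, since the first two coordinates are then equal and the tilde-formula reduces to a function of $\nu$ alone).
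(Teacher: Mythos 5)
Your proposal is correct and follows essentially the same route as the paper: items (1) and (2) are obtained by substituting the functions from conditions \eqref{(7a)} and \eqref{(7b)} into the defining formula \eqref{tilde}, and the separation statement is handled by the same three-case split, with \eqref{(7c)} covering the case where only the $\T$-coordinate differs. The paper merely compresses the first two cases into the phrase ``by the fact proven above,'' which is exactly your reduction via $\xt:=\xo$ (respectively $\yt:=\yo$).
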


\begin{proof}
If $\xz\not\in\set{\xo,\xt}$, then by virtue of condition \eqref{(7a)}, there exists $f_0\in\A$ such that $f_0(\xz)=1$, $f_0(\x_j)=0$ for $j=1,2$, and $d(f_0)(\y_k)=0$ for $k=0,1,2$. By combining these equations with \eqref{tilde}, we conclude that $\TF_0(\pz)=1$ and $\TF_0(\po)=0=\TF_0(\pt)$.

Now, we consider the case in which $\yz\not\in\set{\yo,\yt}$. It follows from \eqref{(7b)} that there exists $f_1\in\A$ such that $f_1(\x_k)=0$ for $k=0,1,2$, $d(f_1)(\yz)=1$ and $d(f_1)(\y_j)=0$ for $j=1,2$. It follows that $\TF_1(\pz)=\nuz$ and $\TF_1(\po)=0=\TF_1(\pt)$
by \eqref{tilde} with the preceding equations.

Finally, we shall assume that $\pz\neq\po$. We shall prove that $\TF_2(\pz)\neq\TF_2(\po)$ for some $f_2\in\A$. It is necessary to consider the case in which $\xz=\xo$, $\yz=\yo$ and $\nuz\neq\nuo$ by the fact proven above. By condition \eqref{(7c)}, there exists $f_2\in\A$ such that $f_2(\xz)=0$ and $d(f_2)(\yz)=1$. We infer from equation \eqref{tilde} that $\TF_2(\pz)=\nuz\neq\nuo=\TF_2(\po)$, since $\xz=\xo$ and $\yz=\yo$.
\end{proof}

We will prove that the map $\h$ defined as in equation \eqref{h} is a homeomorphism from $\T\times\Z$ onto $\del$. This property is of significant importance in the characterization of surjective isometries on $\TA$.

\begin{lem}\label{lem3.7}
The map $\h\colon\T\times\Z\to\del$ is a homeomorphism that satisfies $\h(\T\times\chta)=\ext$. In particular, the set $\del$ is a compact subset of the set $\Aos$.
\end{lem}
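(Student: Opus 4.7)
The plan is to verify that $\h$ is a continuous bijection from the compact Hausdorff space $\T\times\Z$ onto the Hausdorff space $\del$, and then deduce the rest.

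First I would check surjectivity and injectivity. Surjectivity is immediate from the definition of $\del$. For injectivity, suppose $\la_1\d_{\po}=\la_2\d_{\pt}$ in $\TA^*$. Evaluating at $\wt{\unit}\in\TA$ (which equals $1$ on all of $\Z$ by condition \eqref{(4)} and \eqref{tilde}) gives $\la_1=\la_2\in\T$, so $\d_{\po}=\d_{\pt}$. If $\po\neq\pt$, Proposition~\ref{prop3.5} would furnish $f_2\in\A$ with $\TF_2(\po)\neq\TF_2(\pt)$, contradicting $\d_{\po}=\d_{\pt}$. Hence $\po=\pt$.

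Next I would verify continuity of $\h$ with respect to the product topology on $\T\times\Z$ and the weak*-topology on $\del\subset\TA^*$. By definition of the weak*-topology, it suffices to check that for each fixed $\TF\in\TA$ the scalar map
\[
(\la,\p)\mapsto\h(\la,\p)(\TF)=\la\,\TF(\p)
\]
is continuous on $\T\times\Z$, which follows because $\TF\in C(\Z)$ and scalar multiplication is continuous.

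Now $\T\times\Z=\T\times\PA\times\PB\times\T$ is compact as a product of compact spaces, and $\del$ is Hausdorff as a subspace of $(\TA^*,\text{weak*})$. A continuous bijection from a compact space onto a Hausdorff space is automatically a homeomorphism, so $\h$ is the desired homeomorphism. Consequently, $\del=\h(\T\times\Z)$ is the continuous image of a compact set, hence a compact subset of $\Aos$.

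Finally, the identity $\h(\T\times\chta)=\ext$ is just a repackaging of the Arens--Kelley description \eqref{AK} of the extreme points of $\Aos$, together with the definition \eqref{h} of $\h$. The only genuinely nontrivial step is injectivity, and there the work has already been done by Proposition~\ref{prop3.5}; the remaining ingredients (continuity via the weak*-topology, the compact-to-Hausdorff principle, and Arens--Kelley) are standard.
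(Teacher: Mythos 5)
Your proposal is correct and follows essentially the same route as the paper: injectivity via evaluation at $\wt{\unit}$ plus the point-separation from Proposition~\ref{prop3.5}, continuity into the weak*-topology, the compact-to-Hausdorff principle, and the Arens--Kelley identity \eqref{AK}. You merely spell out the continuity check (that $(\la,\p)\mapsto\la\TF(\p)$ is continuous for each fixed $\TF$) in more detail than the paper does.
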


\begin{proof}
Clearly, $\h$ is surjective. We shall show that $\h$ is injective. Let us assume that $\h(\laz,\pz)=\h(\lao,\po)$ for $\laz,\lao\in\T$ and $\pz,\po\in\Z$. Then we have that $\laz\d_{\pz}=\lao\d_{\po}$. 
Note that $\laz=\laz\d_{\pz}(\wt{\unit})=\lao\d_{\po}(\wt{\unit})=\lao$ by using \eqref{tilde}. 
This implies that $\d_{\pz}=\d_{\po}$. It follows that $\pz=\po$, since $\TA$ separates the points of $\Z$ by Proposition~\ref{prop3.5}. Consequently, $(\laz,\pz)=(\lao,\po)$, as required. 

We can verify that $\h$ is a continuous map from $\T\times\Z$ with the product topology to $\del$ with the relative weak*-topology. Therefore, $\h$ is a bijective continuous map from the compact space $\T\times\Z$ onto the Hausdorff space $\del$. Consequently, $\h$ is a homeomorphism, as claimed. We obtain $\h(\T\times\chta)=\ext$ by
equation \eqref{AK}.
\end{proof}

\section{An induced isometry on $\TA$}

Let $T$ be a surjective, possibly nonlinear, isometry from $(\A,\VA{\cdot})$ onto itself. In this section, we shall introduce a surjective isometry from $(\TA,\Vinf{\cdot})$ onto itself by using $T$. We define a mapping $T_0$ on $(\A,\VA{\cdot})$ as follows:
\begin{equation}\label{T_0}
T_0 = T - T(0).
\end{equation}
By invoking the Mazur--Ulam theorem \cite{maz,nic}, we conclude that $T_0$ is a surjective, {\it real linear} isometry from $(\A,\VA{\cdot})$ onto itself. 

Let $S$ be the mapping defined by $UT_0U^{-1}$ (see \eqref{U}). Since $U$ is a surjective complex linear isometry, the mapping $S$ from $(\TA,\Vinf{\cdot})$ onto itself is a well-defined surjective real linear isometry.
\[
\begin{CD}
A@>{T_0}>>A\\
@V{U}VV
@VV{U}V\\
\TA@>>{S}>\TA
\end{CD}
\]
Hence $SU=UT_0$, and this can be expressed as
\begin{equation}\label{S}
S(\TF) = \widetilde{T_0(f)} \qq (f\in\A).
\end{equation}
We define a mapping $S_*\colon\TA^*\to\TA^*$ by
\begin{equation}\label{S_*}
S_*(\xi)(\TF) = \Re[\xi(S(\TF))] - i \, \Re [\xi(S(i\TF))]
\end{equation}
for $\xi\in\TA^*$ and $\TF\in\TA$.
The mapping $S_*$ is a surjective real linear isometry with respect to the operator norm on $\TA^*$. For a detailed proof, see \cite[Proposition~5.17]{rud}. This shows that $S_*(\ext) = \ext$.

The following lemma will demonstrate that the mapping $S_*$ preserves the set $\del$. This property is significant in characterizing the surjective real linear isometry $S$ from $(\TA,\Vinf{\cdot})$ onto itself.

\begin{lem}\label{lem3.8}
$S_*(\del)=\del$.
\end{lem}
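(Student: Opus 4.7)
The plan is to deduce $S_*(\del)=\del$ from three ingredients already assembled: the identity $S_*(\ext)=\ext$, the weak*-density of $\ext$ in $\del$, and the weak*-continuity of $S_*$.

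First I would establish the weak*-closure relation $\ov{\ext}=\del$ in $\TA^*$. The inclusion $\ext\subseteq\del$ is immediate from \eqref{AK}. For density, Lemma~\ref{lem3.4} gives $\MD\times\T\subseteq\chta$, so $\T\times(\MD\times\T)$ is dense in $\T\times\Z$; applying the homeomorphism $\h$ from Lemma~\ref{lem3.7} shows that $\h(\T\times\chta)=\ext$ is dense in $\del=\h(\T\times\Z)$. Since $\del$ is weak*-compact by Lemma~\ref{lem3.7}, it is weak*-closed, so $\ov{\ext}=\del$.

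Next I would verify weak*-continuity of $S_*$. For each fixed $\TF\in\TA$, formula~\eqref{S_*} expresses $\xi\mapsto S_*(\xi)(\TF)$ as $\Re[\xi(S(\TF))]-i\,\Re[\xi(S(i\TF))]$; the evaluations $\xi\mapsto\xi(S(\TF))$ and $\xi\mapsto\xi(S(i\TF))$ are weak*-continuous and $\Re$ is continuous, so each coordinate functional $\xi\mapsto S_*(\xi)(\TF)$ is weak*-continuous, which means $S_*$ itself is weak*-continuous.

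Finally I would combine the pieces. Using weak*-continuity of $S_*$ together with $S_*(\ext)=\ext$ and $\ov{\ext}=\del$, I get
\[
S_*(\del)=S_*(\ov{\ext})\subseteq\ov{S_*(\ext)}=\ov{\ext}=\del.
\]
For the reverse inclusion, $S_*(\del)$ is the continuous image of the weak*-compact set $\del$, hence weak*-compact and weak*-closed; from $\ext=S_*(\ext)\subseteq S_*(\del)$ it follows that $\del=\ov{\ext}\subseteq S_*(\del)$. Therefore $S_*(\del)=\del$. I do not anticipate a substantive obstacle: all the needed machinery (the homeomorphism $\h$, the density of $\MD\times\T$ in $\Z$, and the identity $S_*(\ext)=\ext$) is already in place, and the only mildly delicate point is that $S_*$ is merely real-linear rather than complex-linear, which nevertheless does not affect either the weak*-continuity argument or the topological manipulation $S_*(\ov{\ext})\subseteq\ov{S_*(\ext)}$.
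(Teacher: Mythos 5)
Your proposal is correct and follows essentially the same route as the paper: both rest on $S_*(\ext)=\ext$, the weak*-density of $\ext$ (via $\MD\times\T\subseteq\chta$ and the homeomorphism $\h$) in the weak*-compact set $\del$, and the weak*-continuity of $S_*$ checked on coordinate functionals. The only (harmless) variation is in the reverse inclusion, where you use compactness of $S_*(\del)$ together with $\ext\subseteq S_*(\del)$ instead of repeating the forward argument for $S_*^{-1}$ as the paper does; this slightly streamlines matters since you need not separately verify weak*-continuity of $S_*^{-1}$.
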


\begin{proof}
Let $\h\colon\T\times\Z\to\del$ be the map defined as in equation \eqref{h}. As shown in Lemma~\ref{lem3.7}, $\h$ is a homeomorphism that satisfies the following equation:
\[
\h(\T\times\chta)=\ext= S_*(\ext).
\]
By Lemma~\ref{lem3.4}, we have $\bta\subset\chta\subset\Z$. Therefore, the last equations imply that
\[
S_*(\h(\T\times\bta))
\subset S_*(\h(\T\times\chta))
=S_*(\ext)
=\ext
=\h(\T\times\chta)
\subset\h(\T\times\Z).
\]
By the definition of the map $\h$, we obtain $\h(\T\times\Z)=\set{\la\d_{\p}:\la\in\T,\,\p\in\Z}=\del$. Consequently, we conclude that
\[
S_*(\h(\T\times\bta))\subset
\h(\T\times\Z)=\del.
\]
Let $\clk{M}{L}$ denote the closure of a subset $L$ in a topological space $M$. By definition, $\MD$ is dense in $\PA\times\PB$, and thus $\clk{\Z}{\bta}=\PA\times\PB\times\T=\Z$. Since the map $\h\colon\T\times\Z\to\del$ is a homeomorphism, we obtain $\h(\clk{\T\times\Z}{L})=\clk{\del}{\h(L)}$ for any $L\subset\T\times\Z$. It follows that
\[
\del=\h(\T\times\Z)
=\h(\T\times\clk{\Z}{\bta})
=\h(\clk{\T\times\Z}{\T\times\MD\times\T})
=\clk{\del}{\h(\T\times\bta)}.
\]
From the preceding equations, we infer that
\[
S_*(\del)=S_*(\clk{\del}{\h(\T\times\bta)}).
\]
We see that $S_*\colon\TA^*\to\TA^*$ is bijective, since it is a surjective isometry. By virtue of the definition of $S_*$, we can show that this map is continuous with respect to the weak*-topology
on $\TA^*$. Similarly, we can prove that the map $S_*^{-1}$ is continuous with respect to the weak*-topology on $\TA^*$. Therefore, we conclude that $S_*\colon\TA^*\to\TA^*$ is a homeomorphism with respect to the weak*-topology. Since $S_*(\h(\T\times\bta))\subset\del$, we obtain
\[
S_*(\del)
=S_*(\clk{\del}{\h(\T\times\bta)})
=\clk{\TA^*}{S_*(\h(\T\times\bta))}
\subset\clk{\TA^*}{\del}
=\del,
\]
where we have used the fact that $\del$ is compact by Lemma~\ref{lem3.7}. This consequently leads to the conclusion that $S_*(\del)\subset\del$. The same arguments, when applied to $S_*^{-1}$ in place of $S_*$, yields the result that $S_*^{-1}(\del)\subset\del$, which in turn implies that $S_*(\del)=\del$.
\end{proof}

\begin{defn}\label{defn1}
Let $\h\colon\T\times\Z\to\del$ be the homeomorphism
as defined in 
\eqref{h} (see Lemma~\ref{lem3.7}).
Let $\pio\colon\T\times\Z\to\T$
and $\pit\colon\T\times\Z\to\Z$ be the natural projections
from $\T\times\Z$ to the first and second coordinate,
respectively.
Two maps, $\al\colon\T\times\Z\to\T$
and $\Phi\colon\T\times\Z\to\Z$, are defined as follows:
$\al=\pio\circ \h^{-1}\circ S_*\circ\h$
and $\Phi=\pit\circ\h^{-1}\circ S_*\circ\h$.
Since $S_*(\del)=\del$ as proven in Lemma~\ref{lem3.8},
it follows that the maps $\al$ and $\Phi$ are well-defined.
\end{defn}
\[
\begin{CD}
\T\times\Z@>{}>>\T\times\Z\\
@V{\h}VV
@VV{\h}V\\
\del@>>{S_*}>\del
\end{CD}
\]
By the definitions of maps $\al$ and $\Phi$, we have that
$(\h^{-1}\circ S_*\circ\h)(\la,\p)=(\al(\la,\p),\Phi(\la,\p))$
for all $(\la,\p)\in\T\times\Z$.
Therefore, we conclude that
$(S_*\circ\h)(\la,\p)=\h(\al(\la,\p),\Phi(\la,\p))$,
which in turn implies that
$S_*(\la\d_{\p})=\al(\la,\p)\d_{\Phi(\la,\p)}$.
For the sake of simplicity in notation,
we shall henceforth write $\al(\la,\p)=\al_\la(\p)$.
Subsequently, we have
\begin{equation}\label{Sstar}
S_*(\la\d_{\p})=\al_\la(\p)\dphi{\la,\p}
\end{equation}
for all $(\la,\p)\in\T\times\Z$.
It can be observed that both $\al$ and $\Phi$ are surjective
continuous maps, since $\h$ and $S_*$ are homeomorphisms.

The remainder of this section will be dedicated to an examination of the maps $\al$ and $\Phi$. The following lemma will demonstrate the connection between two maps, namely $\alo$ and $\al_i$.

\begin{lem}\label{lem4.1}
For each $\p\in\Z$, we obtain $\al_i(\p)=i\al_1(\p)$ or $\al_i(\p)=-i\al_1(\p)$.
\end{lem}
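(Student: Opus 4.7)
The approach is to exploit real linearity and real homogeneity of $S_*$ applied to the element $(1+i)\d_{\p}\in\TA^*$, and then pair the resulting identities with the constant function $\tio\in\TA$ supplied by conditions \eqref{(3)} and \eqref{(4)}. The key observation is that $(1+i)\d_{\p}$ admits two useful decompositions: as the sum $\d_{\p}+i\d_{\p}$ (to which we apply additivity) and as the real multiple $\sqrt{2}\,(e^{i\pi/4}\d_{\p})$ of an element of $\del$ (to which we apply real homogeneity).

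The computation I would carry out is as follows. Applying $S_*$ additively and invoking \eqref{Sstar}, one obtains on one hand
\[
S_*((1+i)\d_{\p})=S_*(\d_{\p})+S_*(i\d_{\p})=\al_1(\p)\dphi{1,\p}+\al_i(\p)\dphi{i,\p},
\]
while on the other hand, using real homogeneity with the real scalar $\sqrt{2}$ together with \eqref{Sstar},
\[
S_*((1+i)\d_{\p})=\sqrt{2}\,S_*(e^{i\pi/4}\d_{\p})=\sqrt{2}\,\al_{e^{i\pi/4}}(\p)\dphi{e^{i\pi/4},\p}.
\]
Next I would evaluate both expressions on $\tio\in\TA$. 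Since $\tio\equiv 1$ on $\Z$, every point evaluation $\d_q$ returns the value $1$, so the Dirac supports disappear and one is left with the scalar identity
\[
\al_1(\p)+\al_i(\p)=\sqrt{2}\,\al_{e^{i\pi/4}}(\p).
\]

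Taking absolute values and using $|\al_{e^{i\pi/4}}(\p)|=1$ gives $|\al_1(\p)+\al_i(\p)|=\sqrt{2}$. Since $\al_1(\p),\al_i(\p)\in\T$, writing the ratio as $\al_i(\p)/\al_1(\p)=e^{i\theta}$ reduces the identity to $|1+e^{i\theta}|^2=2+2\cos\theta=2$, i.e.\ $\cos\theta=0$, whence $\theta=\pm\pi/2$ and therefore $\al_i(\p)=\pm i\,\al_1(\p)$. There is no serious obstacle here; the one subtlety is to use the \emph{real} linearity of $S_*$ guaranteed by the definition \eqref{S_*} rather than attempting complex linearity, and the use of $\tio$ as a test element conveniently sidesteps any need to analyze whether $\Phi(1,\p)$, $\Phi(i,\p)$, and $\Phi(e^{i\pi/4},\p)$ coincide.
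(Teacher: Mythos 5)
Your proposal is correct and follows essentially the same route as the paper: both decompose $(1+i)\d_{\p}$ as $\d_{\p}+i\d_{\p}$ and as $\sqrt{2}\,\la_0\d_{\p}$ with $\la_0=e^{i\pi/4}=(1+i)/\sqrt{2}$, apply the real linearity of $S_*$, evaluate at the constant function $\tio$ to eliminate the Dirac supports, and conclude from $|\al_1(\p)+\al_i(\p)|=\sqrt{2}$ that $\al_i(\p)\ov{\al_1(\p)}=\pm i$. No differences worth noting.
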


\begin{proof}
Let us fix an arbitrary $\p\in\Z$. We then set $\la_0=(1+i)/\sqrt{2}\in\T$. By the real linearity of $S_*$, we obtain the following:
\[
\sqrt{2}\,\al_{\la_0}(\p)\dphi{\la_0,\p}
=S_*(\sqrt{2}\,\la_0\d_{\p})
=S_*(\d_{\p}) + S_*(i\d_{\p})
=\al_1(\p)\dphi{1,\p}+\al_i(\p)\dphi{i,\p}.
\]
The last equations, evaluated at $\wt{\unit}\in\TA$,
yield the result that
$\sqrt{2}\,\al_{\la_0}(\p)=\al_1(\p)+\al_i(\p)$.
Since $|\al_\la(\p)|=1$ for all $\la\in\T$,
we have that
$\sqrt{2}=|\al_1(\p)+\al_i(\p)|=|1+\al_i(\p)\ov{\al_1(\p)}|$.
It can be seen that $\al_i(\p)\ov{\al_1(\p)}$ is
equal to $i$ or $-i$,
since $|\al_i(\p)\ov{\al_1(\p)}|=1$.
Therefore, we conclude that $\al_i(\p)=i\al_1(\p)$
or $\al_i(\p)=-i\al_1(\p)$.
\end{proof}

In the following lemma, we will examine the connection between two functionals $S_*(i\d_{\p})$ and $S_*(\d_{\p})$ for each $\p\in\Z$.

\begin{lem}\label{lem4.2}
There is a continuous map $\ez\colon\Z\to\set{\pm 1}$ such that $S_*(i\d_{\p})=i\ez(\p)\alo(\p)\dphi{i,\p}$ for every $\p\in\Z$. Moreover, for each $\xz\in\PA$ and $\yz\in\PB$, the map $\ez(\xz,\yz,\cdot)\colon\T\to\set{\pm1}$, which maps $\nu\in\T$ to $\ez(\xz,\yz,\nu)\in\set{\pm1}$, is a constant function.
\end{lem}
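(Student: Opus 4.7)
The strategy is to read off $\epsilon_0$ directly from Lemma~\ref{lem4.1} and then exploit connectedness of $\T$ for the ``moreover'' part.

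\textbf{Step 1 (definition of $\epsilon_0$).} By Lemma~\ref{lem4.1}, for every $\p\in\Z$ the quantity $\al_i(\p)\ov{\al_1(\p)}$ equals $i$ or $-i$, so the formula
\[
\ez(\p)=-i\,\al_i(\p)\ov{\al_1(\p)}
\]
defines a map $\ez\colon\Z\to\set{\pm1}$. With this choice, $\al_i(\p)=i\,\ez(\p)\,\al_1(\p)$. Applying equation \eqref{Sstar} with $\la=i$ gives
\[
S_*(i\d_{\p})
=\al_i(\p)\dphi{i,\p}
=i\,\ez(\p)\,\alo(\p)\dphi{i,\p},
\]
which is the desired formula.

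\textbf{Step 2 (continuity of $\ez$).} Recall from Definition~\ref{defn1} that $\al=\pio\circ\h^{-1}\circ S_*\circ\h$. Since $\h$ is a homeomorphism (Lemma~\ref{lem3.7}), $S_*$ is continuous, and $\pio$ is continuous, the map $\al\colon\T\times\Z\to\T$ is continuous. In particular, the slices $\alo(\cdot)=\al(1,\cdot)$ and $\al_i(\cdot)=\al(i,\cdot)$ are continuous on $\Z$. Hence $\ez=-i\,\al_i\cdot\ov{\alo}$ is continuous as a composition of continuous maps from $\Z$ into $\T$, and its image already lies in $\set{\pm1}$.

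\textbf{Step 3 (constancy along the $\T$-fibre).} Fix $\xz\in\PA$ and $\yz\in\PB$. The slice $\set{\xz}\times\set{\yz}\times\T$ is homeomorphic to $\T$ and is therefore connected, while $\set{\pm1}$ is a discrete two-point space. A continuous map from a connected space to a discrete space is constant, so $\nu\mapsto\ez(\xz,\yz,\nu)$ is constant on $\T$.

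\textbf{Main obstacle.} There is no serious obstacle: once Lemma~\ref{lem4.1} provides the pointwise dichotomy $\al_i(\p)=\pm i\,\alo(\p)$, the definition of $\ez$ is forced, and the only item that needs care is continuity. The slightly subtle point is that a priori $\ez$ is defined as a quotient in $\T$, and one must observe that this quotient is automatically valued in $\set{\pm1}$ by Lemma~\ref{lem4.1}, so ``continuity into $\set{\pm1}$'' is the same as ``continuity into $\T$''. After that, the connectedness of the circle fibre immediately yields the constancy assertion.
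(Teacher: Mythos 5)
Your proposal is correct and follows essentially the same route as the paper: both rest on the dichotomy of Lemma~\ref{lem4.1}, the continuity of $\al$, and the connectedness of the circle fibre. The only (cosmetic) difference is that you define $\ez$ by the explicit formula $-i\,\al_i\ov{\alo}$, which gives continuity immediately, whereas the paper defines it piecewise on the partition $\Z_{+}\cup\Z_{-}$ and argues that these sets are closed and complementary, hence clopen.
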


\begin{proof}
We define the sets $\Z_+=\set{\p\in\Z:\al_i(\p)= i\alo(\p)}$ and $\Z_-=\set{\p\in\Z:\al_i(\p)=-i\alo(\p)}$. We see that $\Z=\Z_+\cup\Z_-$ and $\Z_+\cap\Z_-=\emptyset$, since $\al_i(\p)=i\alo(\p)$ or $\al_i(\p)=-i\alo(\p)$ for each $\p\in\Z$ as shown in Lemma~\ref{lem4.1}. By virtue of the continuity of the function $\al\colon\T\times\Z\to\T$, the function $\al_\la\colon\Z\to\T$, which maps each point $\p\in\Z$ to $\al(\la,\p)$ for a fixed $\la\in\T$, is continuous on $\Z$. It can be observed that both $\Z_+$ and $\Z_-$ are closed subsets of $\Z$. The function $\ez\colon\Z\to\set{\pm 1}$, defined as
\[
\ez(\p)=
\begin{cases}
\ph{-}1&\p\in\Z_+\\[1mm]
-1&\p\in\Z_-
\end{cases},
\]
is therefore continuous on $\Z$. Since $\Z=\Z_+\cup\Z_-$, we obtain $\al_i(\p)=i\ez(\p)\alo(\p)$ for every $\p\in\Z$. By combining the last equation with equation \eqref{Sstar}, we conclude that $S_*(i\d_{\p})=\al_i(\p)\d_{\Phi(i,\p)}=i\ez(\p)\alo(\p)\d_{\Phi(i,\p)}$ for all $\p\in\Z$.

Finally, fix $\xz\in\PA$ and $\yz\in\PB$ arbitrarily. It is evident that the range of $\T$, $\ez(\xz,\yz,\T)$, under the map $\ez(\xz,\yz,\cdot)$ is a connected subset of $\set{\pm1}$, since $\ez$ is continuous on $\Z$. Therefore, $\ez(\xz,\yz,\cdot)$ is a constant function on $\T$.
\end{proof}

The following lemma establishes the connection between three points, namely, $\Phi(\la,\p)$, $\Phi(1,\p)$, and $\Phi(i,\p)$ in $\Z$ for each point $\p\in\Z$. This is a crucial aspect examining
the mapping $\Phi$.

\begin{lem}\label{lem4.3}
Let $\ez\colon\Z\to\set{\pm 1}$ be the continuous function defined as in Lemma~\ref{lem4.2}, and let $\la=a+ib\in\T$ with $a,b\in\R$ and $\p\in\Z$. Then we have that 
\begin{equation}\label{lem4.3.1}
\la^{\ez(\p)}\TF(\Phi(\la,\p))
=a\TF(\Phi(1,\p))+ib\ez(\p)\TF(\Phi(i,\p))
\end{equation}
for all $\TF\in\TA$.
\end{lem}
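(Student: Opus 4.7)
The plan is to decompose $\la\d_\p = a\d_\p + b(i\d_\p)$ in the real vector space underlying $\TA^*$, apply the real linear map $S_*$, and then identify the two sides by first testing against the constant function $\wt{\unit}$ and afterwards against an arbitrary $\TF\in\TA$.

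First, I would use the real linearity of $S_*$ together with equation \eqref{Sstar} (specialized to $\la=1$) and Lemma~\ref{lem4.2} to write
\[
\al_\la(\p)\,\d_{\Phi(\la,\p)}
=S_*(\la\d_\p)
=aS_*(\d_\p)+bS_*(i\d_\p)
=a\alo(\p)\,\d_{\Phi(1,\p)}+ib\ez(\p)\alo(\p)\,\d_{\Phi(i,\p)}.
\]
Evaluating this equality at $\wt{\unit}\in\TA$ and using the fact that $\d_q(\wt{\unit})=1$ for every $q\in\Z$ gives the scalar identity $\al_\la(\p)=\alo(\p)(a+ib\ez(\p))$.

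Next, I would verify case-by-case on $\ez(\p)\in\set{\pm1}$ that $\al_\la(\p)/\alo(\p)=\la^{\ez(\p)}$: when $\ez(\p)=1$ the right-hand factor is $a+ib=\la$, while when $\ez(\p)=-1$ it is $a-ib=\ov{\la}=\la^{-1}$ (using $\la\in\T$). Dividing through by $\alo(\p)$, which has modulus one, in the displayed equation above and evaluating both sides at an arbitrary $\TF\in\TA$ then yields
\[
\la^{\ez(\p)}\TF(\Phi(\la,\p))
=a\TF(\Phi(1,\p))+ib\ez(\p)\TF(\Phi(i,\p)),
\]
which is exactly \eqref{lem4.3.1}. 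No step here should present a real obstacle; the only subtlety is recognizing that the testing at $\wt{\unit}$ (which is licit because $\wt{\unit}\in\TA$ by conditions \eqref{(3)}--\eqref{(4)}) is what pins down the scalar $\al_\la(\p)$ in terms of $\alo(\p)$ and the sign $\ez(\p)$, after which the rest is a one-line calculation.
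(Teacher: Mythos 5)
Your argument is correct and is essentially the same as the paper's: both decompose $S_*(\la\d_\p)$ by real linearity using \eqref{Sstar} and Lemma~\ref{lem4.2}, evaluate at $\wt{\unit}$ to obtain $\al_\la(\p)=\alo(\p)(a+ib\ez(\p))=\alo(\p)\la^{\ez(\p)}$, and then substitute back and test against an arbitrary $\TF\in\TA$. No gaps.
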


\begin{proof}
It should be recalled that, according to Lemma~\ref{lem4.2},
$S_*(i\d_{\p})=i\ez(\p)\alo(\p)\dphi{i,\p}$.
Since $S_*$ is real linear, we deduce from \eqref{Sstar} that
$$
\al_\la(\p)\dphi{\la,\p}
=S_*(\la \d_{\p})
=aS_*(\d_{\p})+bS_*(i\d_{\p})
=a\alo(\p)\dphi{1,\p}+ib\ez(\p)\alo(\p)\dphi{i,\p},
$$
and thus 
\begin{equation}\label{lem4.3.2}
\al_\la(\p)\dphi{\la,\p}=\alo(\p)(a\dphi{1,\p}+ib\ez(\p)\dphi{i,\p}).
\end{equation}
The evaluation at $\tio\in\TA$ shows that $\al_\la(\p)=\alo(\p)(a+ib\ez(\p))=\alo(\p)\la^{\ez(\p)}$. This is due to the fact that $\la=a+ib\in\T$ and $\ez(\p)\in\set{\pm 1}$. 
Entering the previous equation into \eqref{lem4.3.2}, we obtain $\la^{\ez(\p)}\dphi{\la,\p}=a\dphi{1,\p}+ib\ez(\p)\dphi{i,\p}$, and thus $\la^{\ez(\p)}\TF(\Phi(\la,\p))=a\TF(\Phi(1,\p))+ib\ez(\p)\TF(\Phi(i,\p))$ for all $\TF\in\TA$.
\end{proof}

\begin{defn}\label{defn2}
Let $q_1\colon\Z\to\PA$, $q_2\colon\Z\to\PB$ and $q_3\colon\Z\to\T$ be the natural projections from $\Z=\PA\times\PB\times\T$ onto the $m$-th coordinate of $\Z$ for $m=1,2,3$. In accordance with Definition~\ref{defn1}, the map $\Phi\colon\T\times\Z\to\Z$ is defined in terms of three maps $\phi\colon\T\times\Z\to\PA$, $\psi\colon\T\times\Z\to\PB$ and $\varphi\colon\T\times\Z\to\T$ given by $\phi=q_1\circ\Phi$, $\psi=q_2\circ\Phi$ and $\varphi=q_3\circ\Phi$.
\end{defn}

In agreement with the definition of the three maps, $\phi$, $\psi$ and $\varphi$, we have that
\begin{equation}\label{Phi}
\Phi(\la,\p)=(\phi(\la,\p),\psi(\la,\p),\varphi(\la,\p))
\end{equation}
for every $(\la,\p)\in\T\times\Z$. It should be noted that the maps $\phi$, $\psi$ and $\varphi$ are surjective and continuous, as is the map $\Phi$ (see Definition~\ref{defn1}).

The structure of the map $\Phi$ will be elucidated by an examination of three maps: $\phi$, $\psi$ and $\varphi$. Firstly, it will be shown that the map $\phi$ is independent of the variable $\la\in\T$.

\begin{lem}\label{lem4.4}
The mapping $\phi(1,\cdot)$, which maps $\p\in\Z$ to $\phi(1,\p)\in\PA$, is a surjective continuous map that satisfies the equation $\phi(1,\p)=\phi(\la,\p)$ for all $\p\in\Z$ and $\la\in\T$.

For the sake of brevity, we shall henceforth write $\phi(1,\p)=\phi(\p)$ for all $\p\in\Z$.
\end{lem}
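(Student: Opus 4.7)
The plan is to reduce the statement to the equality $\phi(\la,\p)=\phi(1,\p)$ for every $(\la,\p)\in\T\times\Z$. Granted this, continuity of $\phi(1,\cdot)$ is immediate from the continuity of $\phi$ (noted after Definition~\ref{defn2}), and surjectivity follows because any $\x\in\PA$ can be written $\x=\phi(\la,\p)=\phi(1,\p)$ for some $(\la,\p)\in\T\times\Z$, using surjectivity of $\phi$.

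To prove the equality, I would fix $\p\in\Z$ and assume toward a contradiction that $\phi(\la,\p)\neq\phi(1,\p)$ for some $\la=a+ib\in\T$ with $a\neq 0$. The key tool is the identity from Lemma~\ref{lem4.3},
\[
\la^{\ez(\p)}\TF(\Phi(\la,\p)) = a\TF(\Phi(1,\p))+ib\ez(\p)\TF(\Phi(i,\p)),
\]
tested against appropriate $\TF\in\TA$ produced from condition~\eqref{(7a)}. Writing $\xz=\phi(\la,\p)$, $\xo=\phi(1,\p)$, $\xt=\phi(i,\p)$, I would split into two subcases according to whether $\xz=\xt$ or not.

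If $\xz\notin\{\xo,\xt\}$, condition~\eqref{(7a)} supplies $f\in\A$ with $f(\xz)=1$, $f(\xo)=f(\xt)=0$, and $d(f)$ vanishing at each of $\psi(\la,\p),\psi(1,\p),\psi(i,\p)$; formula~\eqref{tilde} then forces $\TF(\Phi(\la,\p))=1$ and $\TF(\Phi(1,\p))=\TF(\Phi(i,\p))=0$, so the identity collapses to $\la^{\ez(\p)}=0$, contradicting $|\la|=1$. In the remaining subcase $\xz=\xt$, the hypothesis $\xz\neq\xo$ forces $\xo\notin\{\xz,\xt\}$, and applying~\eqref{(7a)} with $\xo$ now playing the distinguished role yields $\TF(\Phi(1,\p))=1$ and $\TF(\Phi(\la,\p))=\TF(\Phi(i,\p))=0$; the identity then reads $0=a$, contradicting $a\neq 0$. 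Hence $\phi(\la,\p)=\phi(1,\p)$ whenever $\Re(\la)\neq 0$.

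The two remaining points $\la=\pm i$ are handled by continuity: $\phi(\cdot,\p)\colon\T\to\PA$ is continuous (since $\Phi$ is) and $\T\setminus\{\pm i\}$ is dense in $\T$. The one potential snag is that the identity in Lemma~\ref{lem4.3} becomes vacuous at $\la=\pm i$ --- both sides reduce to $\pm i\ez(\p)\TF(\Phi(i,\p))$ --- so the direct contradiction argument cannot reach these two values, and passing to the limit is genuinely required. Beyond this, no real obstacle is anticipated.
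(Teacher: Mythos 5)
Your proof is correct, and it rests on the same two ingredients as the paper's (the identity of Lemma~\ref{lem4.3} tested against separating functions supplied by condition~\eqref{(7a)}/Proposition~\ref{prop3.5}), but it is organized differently. The paper first shows $\phi(\la,\p)\in\set{\phi(1,\p),\phi(i,\p)}$ for \emph{every} $\la\in\T$ (your Case~A, which works even when $\Re\la=0$), and then kills the possibility $\phi(1,\p)\neq\phi(i,\p)$ by evaluating the identity at the single well-chosen point $\la_1=(1+i)/\sqrt{2}$; the whole argument is algebraic and never invokes continuity of $\phi(\cdot,\p)$. You instead prove $\phi(\la,\p)=\phi(1,\p)$ directly for all $\la$ with $\Re\la\neq 0$ via your two subcases (your Case~B, extracting $a=0$ from the identity, is essentially the paper's second step in disguise), and then pass to $\la=\pm i$ by continuity of $\phi(\cdot,\p)$ together with density of $\set{\la:\Re\la\neq 0}$ and the fact that $\set{\la:\phi(\la,\p)=\phi(1,\p)}$ is closed. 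Both routes are valid; the paper's buys a continuity-free argument, yours avoids the ad hoc choice of $\la_1$ at the cost of a (perfectly legitimate) topological closing step. One small imprecision: at $\la=-i$ the identity is not actually vacuous --- it yields $\TF(\Phi(-i,\p))=\TF(\Phi(i,\p))$ for all $\TF$, hence $\Phi(-i,\p)=\Phi(i,\p)$ by Proposition~\ref{prop3.5} --- but, as you say, it cannot relate $\phi(i,\p)$ to $\phi(1,\p)$, so your limiting argument is indeed needed there and does the job.
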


\begin{proof}
Let us fix an arbitrary $\p\in\Z$. We first prove that
$\phi(\la,\p)\in\set{\phi(1,\p),\phi(i,\p)}$
for all $\la\in\T$.
We may then suppose, on contrarily, that
there exists $\la_0\in\T\setminus\set{1,i}$
such that $\phi(\la_0,\p)\not\in\set{\phi(1,\p),\phi(i,\p)}$.
By virtue of Proposition~\ref{prop3.5},
there exists $f_0\in\A$ such that
$\TF_0(\Phi(\la_0,\p))=1$ and
$\TF_0(\Phi(1,\p))=0=\TF_0(\Phi(i,\p))$.
By substituting these three equations
into \eqref{lem4.3.1},
we obtain $\la_0^{\ez(\p)}=0$.
This is in contradiction with the assumption
that $\la_0\in\T$.
Therefore, we conclude that
$\phi(\la,\p)\in\set{\phi(1,\p),\phi(i,\p)}$
for all $\la\in\T$.

We next prove that $\phi(1,\p)=\phi(i,\p)$.
For the sake of argument, let us suppose
that $\phi(1,\p)\neq\phi(i,\p)$.
Let $\la_1\in\T$ be defined as $\la_1=(1+i)/\sqrt{2}$.
We obtain
$\phi(\la_1,\p)\in\set{\phi(1,\p),\phi(i,\p)}$
as was previously demonstrated.
We now consider the case where $\phi(\la_1,\p)=\phi(1,\p)$.
In this context, it follows that
$\phi(i,\p)\not\in\set{\phi(1,\p)}
=\set{\phi(1,\p),\phi(\la_1,\p)}$,
since $\phi(1,\p)\neq\phi(i,\p)$.
By applying Proposition~\ref{prop3.5},
there exists $f_1\in\A$ that satisfies
$\TF_1(\Phi(i,\p))=1$ and
$\TF_1(\Phi(1,\p))=0=\TF_1(\Phi(\la_1,\p))$.
It follows from \eqref{lem4.3.1} that $0=i\ez(\p)$,
which is in contradiction with the assumption
that $\ez(\p)\in\set{\pm1}$.
A similar argument will yield a contradiction even if
we assume that $\phi(\la_1,\p)=\phi(i,\p)$.
We conclude that $\phi(1,\p)=\phi(i,\p)$
as was previously asserted.
Therefore, we conclude that the equation
$\phi(1,\p)=\phi(\la,\p)$ holds true
for all $\p\in\Z$ and $\la\in\T$.
This is due to the fact that
$\phi(\la,\p)\in\set{\phi(1,\p),\phi(i,\p)}$.

Finally, we prove that the map $\phi(1,\cdot)$ is surjective.
Let us take an arbitrary element $\x\in\PA$.
Since the map $\phi$ is surjective,
there exists $(\mu,q)\in\T\times\Z$
such that $\x=\phi(\mu,q)$.
In light of the preceding argument,
it follows that
$\phi(1,q)=\phi(\mu,q)=\x$,
which establishes that $\phi(1,\cdot)$ is surjective.
\end{proof}

A similar argument to that used in the proof of Lemma~\ref{lem4.4} shows that $\psi(1,\p)=\psi(\la,\p)$ for all $\p\in\Z$ and $\la\in\T$. We will therefore omit its proof.

\begin{lem}\label{lem4.5}
The mapping $\psi(1,\cdot)$, which maps $\p\in\Z$ to $\psi(1,\p)\in\PB$, is a surjective continuous map that satisfies the equation $\psi(1,\p)=\psi(\la,\p)$ for all $\p\in\Z$ and $\la\in\T$.

We shall henceforth write $\psi(1,\p)=\psi(\p)$ for all $\p\in\Z$.
\end{lem}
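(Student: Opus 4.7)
The plan is to mirror the proof of Lemma~\ref{lem4.4} almost verbatim, with $\psi$ in place of $\phi$ and Proposition~\ref{prop3.5}(2) in place of Proposition~\ref{prop3.5}(1). Fix $\p\in\Z$ and write $\Phi(\la,\p)=(\phi(\la,\p),\psi(\la,\p),\varphi(\la,\p))$ as in \eqref{Phi}. The only structural change from Lemma~\ref{lem4.4} is that Proposition~\ref{prop3.5}(2) produces a test function with $\TF(\p_0)=\nu_0$ rather than $\TF(\p_0)=1$; since $\nu_0\in\T$ is nonzero, this is enough to drive the same contradictions.

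First I would show $\psi(\la,\p)\in\set{\psi(1,\p),\psi(i,\p)}$ for every $\la\in\T$. Arguing by contradiction, if some $\la_0\in\T\setminus\set{1,i}$ satisfies $\psi(\la_0,\p)\notin\set{\psi(1,\p),\psi(i,\p)}$, then the second coordinates of $\Phi(\la_0,\p)$, $\Phi(1,\p)$ and $\Phi(i,\p)$ are three distinct points of $\PB$. Proposition~\ref{prop3.5}(2) therefore produces $f_0\in\A$ with $\TF_0(\Phi(\la_0,\p))=\varphi(\la_0,\p)$ and $\TF_0(\Phi(1,\p))=0=\TF_0(\Phi(i,\p))$. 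Substituting into \eqref{lem4.3.1} at $\la=\la_0$ gives $\la_0^{\ez(\p)}\varphi(\la_0,\p)=0$, contradicting $\la_0,\varphi(\la_0,\p)\in\T$.

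Next I would prove $\psi(1,\p)=\psi(i,\p)$ by contradiction. Assuming they differ, set $\la_1=(1+i)/\sqrt{2}$; the first step forces $\psi(\la_1,\p)\in\set{\psi(1,\p),\psi(i,\p)}$. In the case $\psi(\la_1,\p)=\psi(1,\p)$, the point $\psi(i,\p)$ is distinct from both $\psi(1,\p)$ and $\psi(\la_1,\p)$, so Proposition~\ref{prop3.5}(2) supplies $f_1\in\A$ with $\TF_1(\Phi(i,\p))=\varphi(i,\p)$ and $\TF_1(\Phi(1,\p))=0=\TF_1(\Phi(\la_1,\p))$. Substituting into \eqref{lem4.3.1} at $\la=\la_1$ collapses to $0=(i/\sqrt{2})\,\ez(\p)\,\varphi(i,\p)$, which is impossible since $\ez(\p)\in\set{\pm1}$ and $\varphi(i,\p)\in\T$. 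The case $\psi(\la_1,\p)=\psi(i,\p)$ is symmetric, giving $\psi(1,\p)=\psi(i,\p)$; combined with the first step, this yields $\psi(\la,\p)=\psi(1,\p)$ for every $\la\in\T$.

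Surjectivity and continuity then fall out formally: given $\y\in\PB$, the surjectivity of $\psi\colon\T\times\Z\to\PB$ supplies $(\mu,q)\in\T\times\Z$ with $\psi(\mu,q)=\y$, and the invariance just established yields $\psi(1,q)=\psi(\mu,q)=\y$, while continuity of $\psi(1,\cdot)$ is inherited from continuity of $\psi$. I do not expect a genuine obstacle here; the one point requiring care is that each contradiction rests on the nonvanishing of the twist factor $\varphi(\cdot,\p)\in\T$, which is precisely what makes the passage from Proposition~\ref{prop3.5}(1) to Proposition~\ref{prop3.5}(2) harmless.
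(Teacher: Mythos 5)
Your proposal is correct and is exactly the argument the paper intends: the paper omits the proof of Lemma~\ref{lem4.5}, stating only that it follows by the same reasoning as Lemma~\ref{lem4.4}, and you have carried that reasoning out faithfully, correctly identifying that the only delicate point is that Proposition~\ref{prop3.5}(2) yields a test function with value $\nu_0\in\T$ (i.e.\ $\varphi(\la_0,\p)$ or $\varphi(i,\p)$) rather than $1$, which is still nonzero and so produces the same contradictions via \eqref{lem4.3.1}.
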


The following lemma establishes a connection between the two values of $\varphi(i,\p)$ and $\varphi(1,\p)$ for each $\p\in\Z$.

\begin{lem}\label{lem4.6}
There exists a continuous function $\eo\colon\Z\to\set{\pm1}$ such that $\varphi(i,\p)=\ez(\p)\eo(\p)\varphi(1,\p)$ for all $\p\in\Z$. Moreover, for each pair of elements, $\xz\in\PA$ and $\yz\in\PB$, the map $\eo(\xz,\yz,\cdot)\colon\T\to\set{\pm1}$, which maps $\nu\in\T$ to $\eo(\xz,\yz,\nu)\in\set{\pm1}$, is a constant function.
\end{lem}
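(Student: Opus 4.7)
The plan is to extract an equation purely among $\varphi(1,\p)$, $\varphi(i,\p)$, and $\varphi(\la,\p)$ from Lemma~\ref{lem4.3}, and then to analyze it at a strategically chosen $\la$. Fix $\p\in\Z$ arbitrarily. By Lemmas~\ref{lem4.4} and \ref{lem4.5}, $\Phi(\la,\p)=(\phi(\p),\psi(\p),\varphi(\la,\p))$ for every $\la\in\T$. Condition~\eqref{(7c)} applied to the pair $(\phi(\p),\psi(\p))\in\PA\times\PB$ furnishes $h_3\in\A$ with $h_3(\phi(\p))=0$ and $d(h_3)(\psi(\p))=1$, so by \eqref{tilde} one has $\TH_3(\Phi(\la,\p))=\varphi(\la,\p)$ for every $\la\in\T$. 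Plugging $\TF=\TH_3$ into the identity of Lemma~\ref{lem4.3} yields
\[
\la^{\ez(\p)}\varphi(\la,\p)=a\,\varphi(1,\p)+ib\ez(\p)\varphi(i,\p)
\qq(\la=a+ib\in\T).
\]

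Next I would specialize to $\la=(1+i)/\sqrt{2}$. Since the left side has modulus $1$, taking absolute values gives $|\varphi(1,\p)+i\ez(\p)\varphi(i,\p)|=\sqrt{2}$, and setting $w=i\ez(\p)\varphi(i,\p)\ov{\varphi(1,\p)}$ rewrites this as $|1+w|=\sqrt{2}$ with $|w|=1$. Expanding $|1+w|^2=2+2\Re(w)$ forces $\Re(w)=0$, hence $w\in\set{\pm i}$. Dividing by $i\ez(\p)$ produces $\varphi(i,\p)\ov{\varphi(1,\p)}=\pm\ez(\p)$, so for each $\p$ there is a uniquely determined sign $\eo(\p)\in\set{\pm1}$ satisfying $\varphi(i,\p)=\ez(\p)\eo(\p)\varphi(1,\p)$.

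To promote $\eo$ to a continuous $\set{\pm1}$-valued function I would mirror the partition used in Lemma~\ref{lem4.2}. Define $\Z_+=\set{\p\in\Z:\varphi(i,\p)=\ez(\p)\varphi(1,\p)}$ and $\Z_-=\set{\p\in\Z:\varphi(i,\p)=-\ez(\p)\varphi(1,\p)}$. Continuity of $\ez$ together with continuity of $\varphi(1,\cdot)$ and $\varphi(i,\cdot)$ makes both sets closed, and the dichotomy above gives $\Z_+\cup\Z_-=\Z$ and $\Z_+\cap\Z_-=\emptyset$; declaring $\eo$ to be $+1$ on $\Z_+$ and $-1$ on $\Z_-$ yields the desired continuous function. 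For the final assertion, fix $\xz\in\PA$ and $\yz\in\PB$: the map $\nu\mapsto\eo(\xz,\yz,\nu)$ is a continuous function from the connected space $\T$ into the discrete two-point set $\set{\pm1}$, hence constant.

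The only genuinely non-formal step is the observation that $|w|=1$ and $|1+w|=\sqrt{2}$ force $w=\pm i$; everything else is a routine continuity-and-connectedness argument that parallels Lemma~\ref{lem4.2}, so I do not expect any real obstacle beyond making sure the coordinate-reduction provided by Lemmas~\ref{lem4.4}--\ref{lem4.5} and the separation supplied by \eqref{(7c)} are applied consistently.
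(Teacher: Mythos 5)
Your proposal is correct and follows essentially the same route as the paper: the same test function from condition \eqref{(7c)} reducing Lemma~\ref{lem4.3} to an identity among the $\varphi$-values, the same specialization to $\la=(1+i)/\sqrt{2}$ with the modulus computation forcing $i\ez(\p)\varphi(i,\p)\ov{\varphi(1,\p)}=\pm i$, and the same continuity-plus-connectedness conclusion. The only cosmetic difference is that you spell out the closed partition $\Z_+\cup\Z_-$ explicitly, whereas the paper simply invokes continuity of $\ez$, $\varphi(1,\cdot)$ and $\varphi(i,\cdot)$.
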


\begin{proof}
Let us fix an arbitrary $\p\in\Z$.
We can choose $f_0\in\A$ in such a way that
$f_0(\phi(\p))=0$
and $d(f_0)(\psi(\p))=1$
by virtue of \eqref{(7c)}.
By setting $\laz=(1+i)/\sqrt{2}\in\T$,
we obtain the result that
$\TF_0(\Phi(\mu,\p))=\varphi(\mu,\p)$
for $\mu=\la_0,1,i$,
where we have used the equations
\eqref{tilde} and \eqref{Phi}
with $\phi(\mu,\p)=\phi(\p)$ and
$\psi(\mu,\p)=\psi(\p)$.
The last equation is then applied
to equation \eqref{lem4.3.1} to yield
the following result:
\[
\la_0^{\ez(\p)}\varphi(\la_0,\p)
=\laz^{\ez(\p)}\TF_0(\Phi(\laz,\p))
=\fr{1}{\sqrt{2}}\,\varphi(1,\p)
+\fr{i\ez(\p)}{\sqrt{2}}\,\varphi(i,\p).
\]
We derive from the preceding equation that
\[
\sqrt{2}
=|\varphi(1,\p)+i\ez(\p)\varphi(i,\p)|
=|1+i\ez(\p)\varphi(i,\p)\ov{\varphi(1,\p)}|.
\]
This is due to the fact that
$\varphi(\la,\p)\in\T$ for each $\la\in\T$.
It follows that
$i\ez(\p)\varphi(i,\p)\ov{\varphi(1,\p)}$ is
$i$ or $-i$,
since $i\ez(\p)\varphi(i,\p)\ov{\varphi(1,\p)}\in\T$.
Therefore, we conclude that either
$\varphi(i,\p)=\ez(\p)\varphi(1,\p)$
or $\varphi(i,\p)=-\ez(\p)\varphi(1,\p)$.
By virtue of the continuity of the functions
$\ez$, $\varphi(1,\cdot)$ and $\varphi(i,\cdot)$,
there exists a continuous function
$\eo\colon\Z\to\set{\pm1}$ such that
$\varphi(i,\p)=\ez(\p)\eo(\p)\varphi(1,\p)$ for all $\p\in\Z$.

It can be observed that the map $\eo(\xz,\yz,\cdot)$ is a constant
function on the connected set $\T$
for each $\xz\in\PA$ and $\yz\in\PB$,
since the function $\eo$ is continuous on the set $\Z$.
\end{proof}

\begin{nota}\label{nota3}
For the sake of brevity, we shall henceforth write
$\varphi(1,\p)=\varphi(\p)$
for each $\p\in\Z$.
By virtue of Lemma~\ref{lem4.6},
the following equation is true:
$\varphi(i,\p)=\ez(\p)\eo(\p)\varphi(\p)$
for $\p\in\Z$.

We denote $a+ib\e$ by $\left\langle a+ib\right\rangle^{\e}$
for $a,b\in\R$ and $\e\in\set{\pm1}$.
It thus follows that
$\left\langle\la\mu\right\rangle^{\e}=\left\langle\la\right\rangle^{\e}\left\langle\mu\right\rangle^{\e}$
for all $\la,\mu\in\C$ and $\e\in\set{\pm1}$:
Furthermore, if $\la\in\T$, then we have that
$\left\langle\la\right\rangle^{\e}=\la^{\e}$ for $\e\in\set{\pm1}$.
For simplicity, we shall also write
$\alo(\p)=\al(\p)$ for $\p\in\Z$.
By equation \eqref{Sstar} and Lemma~\ref{lem4.2},
we obtain the following result:
\begin{equation}\label{nota3.1}
S_*(\d_{\p})=\al(\p)\d_{\Phi(1,\p)}
\qq\mbox{and}\qq
S_*(i\d_{\p})=i\ez(\p)\al(\p)\d_{\Phi(i,\p)}
\qq(\p\in\Z).
\end{equation}
\end{nota}

We are now in a position to describe the form of the surjective real linear isometry $S$ from $(\TA,\Vinf{\cdot})$ onto itself.

\begin{lem}\label{lem4.7}
For each $f\in\A$ and $\p=(\x,\y,\nu)\in\Z$, the following equation holds:
\begin{equation}\label{lem5.1.1}
T_0(f)(\x)+d(T_0(f))(\y)\nu
=\left\langle\al(\p)f(\phi(\p))\right\rangle^{\ez(\p)}
+\left\langle\al(\p)\varphi(\p)d(f)(\psi(\p))\right\rangle^{\eo(\p)}.
\end{equation}
\end{lem}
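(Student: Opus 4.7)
The plan is to recognize that the left-hand side of \eqref{lem5.1.1} equals $S(\TF)(\p)$ and then to evaluate $S(\TF)(\p)$ by extracting its real and imaginary parts via the two identities in \eqref{nota3.1}. Indeed, by \eqref{tilde} and \eqref{S}, the quantity $T_0(f)(\x) + d(T_0(f))(\y)\nu$ is exactly $\widetilde{T_0(f)}(\p) = S(\TF)(\p)$, so the lemma reduces to computing $S(\TF)(\p)$ in terms of $\al(\p)$, $\ez(\p)$, $\eo(\p)$, $\phi(\p)$, $\psi(\p)$, and $\varphi(\p)$.

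To carry this out, I would first apply the definition \eqref{S_*} of $S_*$ with $\xi = \d_\p$ and combine with \eqref{nota3.1} to obtain $\al(\p)\TF(\Phi(1,\p)) = \Re[S(\TF)(\p)] - i\Re[S(i\TF)(\p)]$; comparing real parts gives $\Re[S(\TF)(\p)] = \Re[\al(\p)\TF(\Phi(1,\p))]$. Analogously, applying \eqref{S_*} with $\xi = i\d_\p$ and using $\Re(iz) = -\Im(z)$ yields $i\ez(\p)\al(\p)\TF(\Phi(i,\p)) = -\Im[S(\TF)(\p)] + i\Im[S(i\TF)(\p)]$, whose real part gives $\Im[S(\TF)(\p)] = \ez(\p)\Im[\al(\p)\TF(\Phi(i,\p))]$. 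Assembling these,
$$
S(\TF)(\p) = \Re[\al(\p)\TF(\Phi(1,\p))] + i\ez(\p)\Im[\al(\p)\TF(\Phi(i,\p))].
$$

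To finish, I would substitute the explicit formula $\TF(\x,\y,\nu) = f(\x) + d(f)(\y)\nu$ from \eqref{tilde} together with the identifications $\phi(1,\p) = \phi(i,\p) = \phi(\p)$, $\psi(1,\p) = \psi(i,\p) = \psi(\p)$, $\varphi(1,\p) = \varphi(\p)$, and $\varphi(i,\p) = \ez(\p)\eo(\p)\varphi(\p)$ provided by Lemmas~\ref{lem4.4}, \ref{lem4.5}, \ref{lem4.6}, and Notation~\ref{nota3}. This splits $\al(\p)\TF(\Phi(1,\p))$ and $\al(\p)\TF(\Phi(i,\p))$ into an $f$-term and a $d(f)$-term. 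Separating real and imaginary parts term-by-term, using $\ez(\p)^2 = 1$, and recognizing that $\Re(z) + i\e\Im(z) = \left\langle z\right\rangle^{\e}$ for $\e\in\set{\pm1}$ produces the two bracket expressions in \eqref{lem5.1.1}.

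The main obstacle is conceptual rather than computational: since $S$ is only real linear, the single identity $S_*(\d_\p) = \al(\p)\d_{\Phi(1,\p)}$ does \emph{not} directly yield $S(\TF)(\p)$, and one must use both $S_*(\d_\p)$ (which encodes $\Re[S(\TF)(\p)]$) and $S_*(i\d_\p)$ (which encodes $\Im[S(\TF)(\p)]$) to reconstruct $S(\TF)(\p)$. Once this real/imaginary decomposition is in place, threading the signs $\ez(\p)$ and $\eo(\p)$ from Lemmas~\ref{lem4.2} and \ref{lem4.6} through the calculation is routine, and the bracket notation $\left\langle\,\cdot\,\right\rangle^{\pm1}$ from Notation~\ref{nota3} packages the result into the stated form.
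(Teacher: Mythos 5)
Your proposal is correct and takes essentially the same route as the paper's proof: both reconstruct $S(\TF)(\p)$ from its real and imaginary parts via $S_*(\d_{\p})$ and $S_*(i\d_{\p})$ using \eqref{nota3.1}, arrive at $S(\TF)(\p)=\Re[\al(\p)\TF(\Phi(1,\p))]+i\ez(\p)\Im[\al(\p)\TF(\Phi(i,\p))]$, and then substitute \eqref{tilde} together with the identifications of $\Phi(1,\p)$ and $\Phi(i,\p)$ to package the result in the bracket notation. The only difference is cosmetic ordering — you identify the left-hand side as $S(\TF)(\p)$ at the outset rather than at the end.
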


\begin{proof}
Let us fix arbitrary $f\in\A$ and $\p\in\Z$.
In accordance with the definition of $S_*$
(see \eqref{S_*}), it follows that
$\Re[S_*(\xi)(\TF)]=\Re[\xi(S(\TF))]$
for every $\xi\in\TA^*$.
By substituting $\xi=\d_{\p}$ and $\xi=i\d_{\p}$
into the previous equation,
we obtain the following two equations:
\begin{align*}
&\Re[S_*(\d_{\p})(\TF)]=\Re[S(\TF)(\p)],\\
&\Re[S_*(i\d_{\p})(\TF)]=-\Im[S(\TF)(\p)].
\end{align*}
It follows that
\[
S(\TF)(\p)=\Re[S_*(\d_{\p})(\TF)]-i\,\Re[S_*(i\d_{\p})(\TF)].
\]
On the one hand, by substituting \eqref{nota3.1} into the last equation, we obtain that
\begin{equation}\label{lem3.11.3}
S(\TF)(\p)=\Re[\al(\p)\TF(\Phi(1,\p))]
+i\,\Im[\ez(\p)\al(\p)\TF(\Phi(i,\p))].
\end{equation}
Lemmas from \ref{lem4.4} to \ref{lem4.6} imply that
\begin{align*}
&\Phi(1,\p)=(\phi(\p),\psi(\p),\varphi(\p)),\\
&\Phi(i,\p)=(\phi(\p),\psi(\p),\varphi(i,\p)),
\end{align*}
where $\varphi(1,\p)=\varphi(\p)$ and $\varphi(i,\p)=\ez(\p)\eo(\p)\varphi(\p)$. On the other hand, we deduce from \eqref{tilde} that
\begin{align*}
&\TF(\Phi(1,\p))
=f(\phi(\p))+d(f)(\psi(\p))\varphi(\p),\\
&\TF(\Phi(i,\p))
=f(\phi(\p))+d(f)(\psi(\p))\ez(\p)\eo(\p)\varphi(\p).
\end{align*}
The preceding two equations yield the following equations:
\begin{align*}
&\Re[\al(\p)\TF(\Phi(1,\p))]
=\Re[\al(\p)f(\phi(\p))]+\Re[\al(\p)\varphi(\p)d(f)(\psi(\p))],\\
&\Im[\ez(\p)\al(\p)\TF(\Phi(i,\p))]
=\Im[\ez(\p)\al(\p)f(\phi(\p))]
+\Im[\eo(\p)\al(\p)\varphi(\p)d(f)(\psi(\p))].
\end{align*}
By substituting the two preceding equations into \eqref{lem3.11.3}, we deduce that
\begin{equation}\label{lem4.7.1}
S(\TF)(\p)
=\left\langle\al(\p)f(\phi(\p))\right\rangle^{\ez(\p)}
+\left\langle\al(\p)\varphi(\p)d(f)(\psi(\p))\right\rangle^{\eo(\p)}.
\end{equation}
By equation \eqref{S}, we have that $S(\TF)=\wt{T_0(f)}$.
By applying equation \eqref{tilde}, we can rewrite equation \eqref{lem4.7.1} as equation \eqref{lem5.1.1}.
\end{proof}

\section{Proof of the main results and examples}\label{sect5}

\subsection{Proof of Theorem 1}

The following lemma is relatively straightforward, yet it serves as a crucial foundation for the subsequent argument.

\begin{lem}\label{prop5.1}
If $c_0,c_1\in\C$ and $|c_0+\nu c_1| = 1$ for all $\nu\in\T$, then $|c_0|+|c_1|=1$ and $c_0c_1=0$.
\end{lem}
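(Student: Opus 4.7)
The plan is to square the modulus condition and read off both conclusions from the resulting identity. For every $\nu\in\T$, using $|\nu|=1$,
\[
1 = |c_0+\nu c_1|^2 = (c_0+\nu c_1)(\ov{c_0}+\ov{\nu}\,\ov{c_1}) = |c_0|^2+|c_1|^2+2\,\Re(\nu c_1\ov{c_0}).
\]
Thus the quantity $\Re(\nu c_1\ov{c_0})$ is constant as $\nu$ ranges over $\T$.

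First I would observe that as $\nu$ traverses $\T$, the complex number $\nu c_1\ov{c_0}$ traces a circle of radius $|c_1\ov{c_0}|=|c_0||c_1|$ centered at the origin, so $\Re(\nu c_1\ov{c_0})$ attains every value in the interval $[-|c_0||c_1|,|c_0||c_1|]$. In order for this to be a single point, one must have $|c_0||c_1|=0$, i.e., $c_0 c_1=0$. (Concretely, comparing $\nu=1$ and $\nu=-1$ already forces $\Re(c_1\ov{c_0})=0$, and if $c_0,c_1$ were both nonzero one could then choose $\nu_1\in\T$ with $\nu_1 c_1\ov{c_0}=|c_0||c_1|>0$ to reach a contradiction.)

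Once $c_0 c_1=0$ is known, the displayed identity collapses to $|c_0|^2+|c_1|^2=1$. Since at least one of $c_0,c_1$ is zero, $|c_0||c_1|=0$ as well, and therefore
\[
(|c_0|+|c_1|)^2 = |c_0|^2+|c_1|^2+2|c_0||c_1| = 1,
\]
which yields $|c_0|+|c_1|=1$. There is no real obstacle here: the statement is a short direct computation, and the only mild subtlety is the observation that the requirement ``$\Re(\nu c_1\ov{c_0})$ independent of $\nu\in\T$'' already forces $c_0c_1=0$.
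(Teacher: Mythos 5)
Your proof is correct. It reaches the same two conclusions by a route that differs in bookkeeping from the paper's: you expand $|c_0+\nu c_1|^2=|c_0|^2+|c_1|^2+2\,\Re(\nu c_1\ov{c_0})$ and observe that constancy of the left side forces the cross term $\Re(\nu c_1\ov{c_0})$ to be constant on $\T$, hence identically zero, hence $|c_0||c_1|=0$; the paper instead argues by contradiction, choosing the specific phase $\nu_1=c_0c_1^{-1}|c_1||c_0|^{-1}$ so that $\nu_1 c_1$ is a positive multiple of $c_0$, and then reading $|c_0|+|c_1|=1=\bigl||c_0|-|c_1|\bigr|$ off the hypothesis at $\pm\nu_1$. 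The underlying mechanism is the same (evaluating at antipodal phases to kill the cross term), but your squared-modulus version delivers both conclusions in one identity and avoids the case split on which of $|c_0|,|c_1|$ is larger, so it is marginally cleaner; the paper's version stays entirely at the level of moduli and needs no polarization. Every step of your argument checks out, including the final passage from $|c_0|^2+|c_1|^2=1$ and $|c_0||c_1|=0$ to $|c_0|+|c_1|=1$.
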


\begin{proof}
By hypothesis, it follows that $|c_0|+|c_1|\neq0$. Without loss of generality, we may and do assume that $c_0\neq 0$. We will demonstrate that $c_1=0$, which will then allow us to conclude that $|c_0|=1$.
Let us suppose, for the purpose of contradiction, that $c_1\neq0$. By taking $\nu_1=c_0c_1^{-1}|c_1|\,|c_0|^{-1}\in\T$, we obtain $|c_0+\nu_1c_1|=1=|c_0-\nu_1c_1|$. This is in accordance with our initial assumption. The preceding equations yield the following result:
\[
\left|c_0+\fr{c_0|c_1|}{|c_0|}\right|
=1
=\left|c_0-\fr{c_0|c_1|}{|c_0|}\right|,
\]
which implies that $|c_0|+|c_1|=||c_0|-|c_1||$. The last equation shows that if $|c_0|\geq|c_1|$, then $|c_1|=0$. Conversely, if $|c_0|<|c_1|$, then $|c_0|=0$. In any case, this leads to a contradiction,
and thus we may conclude that $c_1=0$. Therefore, by hypothesis, we have that $|c_0|=1$. Therefore, we conclude that $|c_0|+|c_1|=1$ and $c_0c_1=0$.
\end{proof}

Let $(\A,\VA{\cdot})$ be a Banach space with the properties from \eqref{(1)} through \eqref{(7)}, and let $T\colon\A\to\A$ be a surjective isometry. We have divided the proof of Theorem 1 into a series of claims.

\begin{claim}\label{cl1}
There exists $c\in\T$ such that $T_0(\unit)(\x)=c$ for all $\x\in\PA$ and $\al(\p)^{\ez(\p)}=c$ for all $\p\in\Z$.
\end{claim}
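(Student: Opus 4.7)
The plan is to evaluate the identity \eqref{lem5.1.1} of Lemma~\ref{lem4.7} at $f=\unit$. By conditions (3) and (4), $\unit(\phi(\p))=1$ and $d(\unit)=0$, so the second angle bracket on the right vanishes and the first collapses to $\al(\p)^{\ez(\p)}$ (since $\al(\p)\in\T$ gives $\langle\la\rangle^{\e}=\la^{\e}$ as noted in Notation~\ref{nota3}). Hence for every $\p=(\x,\y,\nu)\in\Z$,
\[
T_0(\unit)(\x)+d(T_0(\unit))(\y)\,\nu=\al(\p)^{\ez(\p)}.
\]

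For each fixed $(\x,\y)\in\PA\times\PB$ the right-hand side lies in $\T$ for every $\nu\in\T$, so Lemma~\ref{prop5.1} applied with $c_0=T_0(\unit)(\x)$ and $c_1=d(T_0(\unit))(\y)$ yields
\[
|T_0(\unit)(\x)|+|d(T_0(\unit))(\y)|=1
\qquad\text{and}\qquad
T_0(\unit)(\x)\cdot d(T_0(\unit))(\y)=0
\]
for every $(\x,\y)\in\PA\times\PB$. The product-zero condition, holding for every pair, forces an exclusive dichotomy: either $d(T_0(\unit))\equiv 0$ on $\PB$, or $T_0(\unit)\equiv 0$ on $\PA$.

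In the favorable branch, $\PB$ is a boundary for $\B$, so $d(T_0(\unit))=0$ as an element of $\B$; condition (4) then forces $T_0(\unit)=c\unit$ for some $c\in\C$, and the modulus identity with $d(T_0(\unit))(\y)=0$ gives $|c|=1$, i.e., $c\in\T$. Feeding $T_0(\unit)=c\unit$ back into the displayed identity yields $\al(\p)^{\ez(\p)}=c$ for every $\p\in\Z$, as claimed.

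The main obstacle is ruling out the second branch, in which $\VX{T_0(\unit)}=0$ and $|d(T_0(\unit))|\equiv 1$ on $\PB$. I would first repeat the same analysis with $f=i\unit$, using $\langle\al(\p)i\rangle^{\ez(\p)}=i\ez(\p)\al(\p)^{\ez(\p)}$, to conclude that $T_0(i\unit)\equiv 0$ on $\PA$ as well; by real linearity this gives $T_0(\C\unit)\subset\A_\X:=\{f\in\A:f|_\X=0\}$. Applying the same dichotomy to $T_0^{-1}(\unit)$: the alternative $T_0^{-1}(\unit)\in\C\unit$ is impossible, for then $\unit=T_0(T_0^{-1}(\unit))\in T_0(\C\unit)\subset\A_\X$, contradicting $\unit|_\X\equiv 1$; hence $T_0^{-1}(\unit)\in\A_\X$, and substituting $f=T_0^{-1}(\unit)$ into Lemma~\ref{lem4.7} reduces the left side to $\unit(\x)+0=1$ while the right side collapses (via $T_0^{-1}(\unit)|_\PA=0$) to a single unimodular term $(\al(\p)\varphi(\p)d(T_0^{-1}(\unit))(\psi(\p)))^{\eo(\p)}$. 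Combining the resulting identity $\al(\p)\varphi(\p)d(T_0^{-1}(\unit))(\psi(\p))=1$ with the case-II formula $\al(\p)^{\ez(\p)}=d(T_0(\unit))(\y)\nu$ and the surjectivity and continuity of $\psi,\varphi$ should produce the required contradiction, completing the argument.
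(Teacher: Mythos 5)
Your main line---evaluating \eqref{lem5.1.1} at $f=\unit$, invoking $d(\unit)=0$, applying Lemma~\ref{prop5.1} for each pair $(\x,\y)$, and reading off the dichotomy ``$d(T_0(\unit))\equiv0$ on $\PB$ or $T_0(\unit)\equiv0$ on $\PA$''---is exactly the paper's argument, and your treatment of the first branch (condition \eqref{(4)} forces $T_0(\unit)=c\unit$, the modulus identity gives $c\in\T$, and substituting back yields $\al(\p)^{\ez(\p)}=c$) is complete and correct.

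The genuine gap is in the second branch, which you rightly identify as the crux but do not close: your argument ends with ``should produce the required contradiction,'' and several intermediate steps are themselves unproved. Concretely: (i) you assert that $T_0(i\unit)\equiv0$ on $\PA$ in this branch, but the dichotomy for $i\unit$ is a priori independent of the one for $\unit$, so the alternative $d(T_0(i\unit))\equiv0$ must be excluded by a separate argument (it can be, e.g.\ via $\VA{T_0((1+i)\unit)}=\sqrt{2}$, but you do not give one); (ii) applying ``the same dichotomy'' to $T_0^{-1}(\unit)$ presupposes the analogue of Lemma~\ref{lem4.7} for the inverse isometry, i.e.\ rerunning the construction of $\al$, $\Phi$, $\ez$ for $T_0^{-1}$, which you do not set up; and (iii) the final combination of $\al(\p)\varphi(\p)d(T_0^{-1}(\unit))(\psi(\p))=1$ with $\al(\p)^{\ez(\p)}=d(T_0(\unit))(\y)\nu$ does not visibly terminate---the two identities merely express $\al$ and $\varphi$ in terms of unimodular data and are mutually consistent, and the natural norm computations with $\unit$, $i\unit$ and $T_0^{-1}(\unit)$ yield no contradiction. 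The paper disposes of this branch at the outset in two lines: if $T_0(\unit)$ vanishes on $\PA$, then $T_0(\unit)=0$ because $\cha$ is a boundary for $\A$, hence $d(T_0(\unit))=0$ by linearity of $d$, and \eqref{lem5.1.1} at $f=\unit$ then reads $0=\al(\p)^{\ez(\p)}$, contradicting $\al(\p)\in\T$. You should replace your sketch by that argument; as it stands, the route you propose is not a proof.
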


We shall first prove that there exists $x_0\in\PA$ such that $T_0(\unit)(\x_0)\neq 0$. Suppose, on the contrary, that $T_0(\unit)(\x)=0$ for all $\x\in\PA$. Hence $T_0(\unit)=0$ since $\cha$ is a boundary for $\A$. It follows that $d(T_0(\unit))=0$ since $d$ is a linear map, and thus $d(T_0(\unit))(\y)=0$ for all $\y\in\PB$. Substituting the two equations into equation \eqref{lem5.1.1} yields that $0=\al(\p)^{\ez(\p)}$ for all $\p\in\Z$, where we have used the fact that $d(\unit)=0$ by condition \eqref{(4)}. This contradicts that $\al(\p)\in\T$. Therefore, we conclude that $T_0(\unit)(\x_0)\neq0$ for some $\x_0\in\PA$.

Fixing an arbitrary $\y\in\PB$, we infer from equation \eqref{lem5.1.1} that for all $\nu\in\T$, we have that 
\[
|T_0(\unit)(\x_0)+d(T_0(\unit))(\y)\nu|=|\al(\pz)^{\ez(\pz)}|=1,
\]
where $\pz=(\xz,\y,\nu)\in\Z$. In light of the fact that $T_0(\unit)(\xz)\neq0$, Lemma \ref{prop5.1} shows that $d(T_0(\unit))(\y)=0$ and that $|T_0(\unit)(\xz)|=1$. Since $\y\in\PB$ was chosen arbitrarily, we deduce that $d(T_0(\unit))=0$ on $\Y$. Therefore, $T_0(\unit)$ is a constant function on $\X$ in accordance with \eqref{(4)}. By setting $c=T_0(\unit)$, we have that $c\in\T$. By applying equation \eqref{lem5.1.1}, we conclude that $c=T_0(\unit)(\x)=\al(\p)^{\ez(\p)}$ for all $\p=(\x,\y,\nu)\in\Z$.
\qed

\begin{claim}\label{lem5.3}
Given $\x\in\PA$ and $\y\in\PB$, we have $\phi(\x,\y,\nuo)=\phi(\x,\y,\nut)$ for all $\nuo,\nut\in\T$.
\end{claim}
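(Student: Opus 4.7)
The plan is to argue by contradiction. Suppose $\phi(\x,\y,\nuo) \neq \phi(\x,\y,\nut)$. The map $\nu \mapsto \phi(\x,\y,\nu)\colon \T \to \PA$ is continuous (Lemma~\ref{lem4.4}), so its image is a connected subset of the Hausdorff space $\PA$; since it contains at least two points, it must be infinite. Hence we can pick $\nu_3 \in \T$ with $\phi_3 := \phi(\x,\y,\nu_3)$ distinct from both $\phi_0 := \phi(\x,\y,\nuo)$ and $\phi_1 := \phi(\x,\y,\nut)$. Set $\psi_j := \psi(\x,\y,\nu_j)$ for $j = 0, 1, 3$.

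Since $\phi_3 \notin \{\phi_0,\phi_1\}$, condition \eqref{(7a)} applied to the triples $(\x_0,\x_1,\x_2) = (\phi_3,\phi_0,\phi_1)$ and $(\y_0,\y_1,\y_2) = (\psi_3,\psi_0,\psi_1)$ supplies $h \in \A$ with $h(\phi_3) = 1$, $h(\phi_0) = h(\phi_1) = 0$, and $d(h)(\psi_0) = d(h)(\psi_1) = d(h)(\psi_3) = 0$. The role of $h$ is to annihilate every $d(h)$-contribution to \eqref{lem5.1.1} at the three relevant points, leaving only the $h \circ \phi$ term available to carry information.

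By Claim~\ref{cl1} and Lemmas~\ref{lem4.2}, \ref{lem4.6}, the quantities $\al$, $\ez$, $\eo$ are constant on $\{\x\}\times\{\y\}\times\T$. Evaluating \eqref{lem5.1.1} at $(\x,\y,\nuo)$ and at $(\x,\y,\nut)$ makes the entire right-hand side vanish, giving $T_0(h)(\x) + d(T_0(h))(\y)\nu_j = 0$ for $j = 0, 1$; since $\nuo \neq \nut$, this forces $d(T_0(h))(\y) = 0$ and $T_0(h)(\x) = 0$. A third evaluation of \eqref{lem5.1.1} at $(\x,\y,\nu_3)$ then produces left-hand side $0$ but right-hand side $\langle c^{\ez_0}\rangle^{\ez_0} + 0 = c$, delivering the contradiction $0 = c$. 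The only conceptual step to flag is the recognition that \eqref{(7a)} provides exactly the three $h$-slots plus three $d(h)$-slots of freedom needed to isolate $h(\phi_3)$ as the sole nonzero contribution at $(\x,\y,\nu_3)$; the connectedness argument in the first paragraph is what lets us invoke \eqref{(7a)} with $\phi_3$ disjoint from $\{\phi_0,\phi_1\}$.
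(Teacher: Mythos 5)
Your proof is correct and uses exactly the same ingredients as the paper's: condition \eqref{(7a)} applied to a third point of the image of $\nu\mapsto\phi(\x,\y,\nu)$, equation \eqref{lem5.1.1} evaluated at two distinct values of $\nu$ to force $T_0(h)(\x)=d(T_0(h))(\y)=0$, and connectedness of the continuous image of $\T$. The only difference is the order of the steps: the paper first shows the image lies in the two-point set $\set{\phi_0,\phi_1}$ and then lets connectedness collapse it to a single point, whereas you invoke connectedness first to manufacture the third point $\phi_3$ --- the same argument read in the opposite direction.
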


For each $\nu\in\T$, we set $\p_{\nu}=(\x,\y,\nu)$. Let $\nuo,\nut\in\T$ with $\nuo\neq\nut$. Firstly, we claim $\phi(\p_\nu)\in\set{\phi(\p_{\nuo}),\phi(\p_{\nut})}$ for all $\nu\in\T$. Let us assume, for the sake of contradiction, that there exists a point $\nuz\in\T$ such that $\phi(\p_{\nuz})\not\in\set{\phi(\p_{\nuo}),\phi(\p_{\nut})}$. By virtue of condition \eqref{(7a)}, there exists $f_0\in\A$ such that $f_0(\phio(\p_{\nuz}))=1$, $f_0(\phio(\p_{\nu_j}))=0$ for $j=1,2$ and $d(f_0)(\psio(\p_{\nu_k}))=0$ for $k=0,1,2$. By substituting the previous equations into \eqref{lem5.1.1}, we obtain the following two equations:
\begin{align*}
&T_0(f_0)(\x)+d(T_0(f_0))(\y)\nuz
=\al(\p_{\nuz})^{\ez(\p_{\nuz})},\\
&T_0(f_0)(\x)+d(T_0(f_0))(\y)\nu_j
=0
\qq(j=1,2).
\end{align*}
Since $\nuo\neq\nut$, we deduce that $d(T_0(f_0))(\y)=0=T_0(f_0)(\x)$. This yields that $0=\al(\p_{\nuz})^{\ez(\p_{\nuz})}$. This is in contradiction with the assumption that $\al(\p_{\nuz})\in\T$, and this proves our claim.

In view of the fact that $\phi$ is continuous on $\Z$ by Lemma \ref{lem4.4}, we conclude that the set $\set{\phi(\p_\nu):\nu\in\T}$ is connected. It follows that $\phi(\p_{\nuo})=\phi(\p_{\nut})$ by our claim.
\qed

\begin{claim}\label{lem5.4}
Given $\x\in\PA$ and $\y\in\PB$, we have $\psi(\x,\y,\nuo)=\psi(\x,\y,\nut)$ for all $\nuo,\nut\in\T$.
\end{claim}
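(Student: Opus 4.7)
The plan is to mimic the proof of Claim~\ref{lem5.3} almost verbatim, with the roles of $\phi$ and $\psi$ interchanged and condition \eqref{(7a)} replaced by condition \eqref{(7b)}. Fix $\x\in\PA$, $\y\in\PB$, and for $\nu\in\T$ set $\p_\nu=(\x,\y,\nu)$. Pick $\nuo,\nut\in\T$ with $\nuo\neq\nut$. The first step is to show that $\psi(\p_\nu)\in\set{\psi(\p_{\nuo}),\psi(\p_{\nut})}$ for every $\nu\in\T$.

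To prove this intermediate claim, suppose for contradiction that there exists $\nuz\in\T$ with $\psi(\p_{\nuz})\not\in\set{\psi(\p_{\nuo}),\psi(\p_{\nut})}$. Apply condition \eqref{(7b)} to the points $\x_k=\phi(\p_{\nu_k})\in\PA$ and $\y_k=\psi(\p_{\nu_k})\in\PB$ for $k=0,1,2$ to produce $h_1\in\A$ such that $h_1(\phi(\p_{\nu_k}))=0$ for $k=0,1,2$, $d(h_1)(\psi(\p_{\nuz}))=1$, and $d(h_1)(\psi(\p_{\nu_j}))=0$ for $j=1,2$. Substituting these into \eqref{lem5.1.1} at $\p=\p_{\nu_j}$ for $j=1,2$ gives
\[
T_0(h_1)(\x)+d(T_0(h_1))(\y)\nu_j=0\qq(j=1,2),
\]
so $T_0(h_1)(\x)=0$ and $d(T_0(h_1))(\y)=0$ because $\nuo\neq\nut$. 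On the other hand, evaluating \eqref{lem5.1.1} at $\p=\p_{\nuz}$ yields
\[
T_0(h_1)(\x)+d(T_0(h_1))(\y)\nuz=\left\langle\al(\p_{\nuz})\varphi(\p_{\nuz})\right\rangle^{\eo(\p_{\nuz})},
\]
whose right-hand side has modulus $1$ since $\al(\p_{\nuz}),\varphi(\p_{\nuz})\in\T$ and $\eo(\p_{\nuz})\in\set{\pm1}$. This contradicts the vanishing established above.

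Finally, Lemma~\ref{lem4.5} guarantees that $\psi$ is continuous on $\Z$, so the image $\set{\psi(\p_\nu):\nu\in\T}$ is a connected subset of $\PB$. By the intermediate claim this image is contained in the two-point set $\set{\psi(\p_{\nuo}),\psi(\p_{\nut})}$, hence must be a singleton, so $\psi(\p_{\nuo})=\psi(\p_{\nut})$. I expect no genuine obstacle here: the argument is structurally identical to that of Claim~\ref{lem5.3}, and the only thing to watch is that \eqref{(7b)} is the correct separation tool (producing a function vanishing on $\phi(\p_{\nu_k})$ while separating the $\psi(\p_{\nu_k})$), which is exactly dual to how \eqref{(7a)} was used in Claim~\ref{lem5.3}.
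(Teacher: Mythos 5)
Your proof is correct and is precisely the ``similar argument'' that the paper invokes without writing out: it mirrors the proof of Claim~\ref{lem5.3} with condition~\eqref{(7b)} in place of \eqref{(7a)}, the only new wrinkle being that the nonzero term at $\p_{\nuz}$ is now $\left\langle\al(\p_{\nuz})\varphi(\p_{\nuz})\right\rangle^{\eo(\p_{\nuz})}$, which you correctly observe has modulus $1$.
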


It is proved with a similar argument to that used to prove Claim \ref{lem5.3}.
\qed

\begin{nota} Let $\p=(\x,\y,\nu)\in\Z$. We may write $\phi(\p)=\phi(\x,\y)$ and $\psi(\p)=\psi(\x,\y)$ by Claims \ref{lem5.3} and \ref{lem5.4}. In accordance with Lemmas~\ref{lem4.2} and \ref{lem4.6}, we may express $\ez(\p)=\ez(\x,\y)$ and $\eo(\p)=\eo(\x,\y)$. By setting $\et=\ez\eo$, we obtain $\al(\p)^{\eo(\p)}=c^{\et(\x,\y)}$ since $\al(\p)^{\ez(\p)}=c$ by Claim \ref{cl1}. 
\end{nota}

\begin{claim}\label{lem5.5}
For each $f\in\A$, $\x\in\PA$ and $\y\in\PB$, we have the
following equations:
\begin{equation}\label{lem5.5.1}
T_0(f)(\x)+d(T_0(f))(\y)\nu
=c\left\langle f(\phi(\x,\y))\right\rangle^{\ez(\x,\y)}
+c^{\et(\x,\y)}\left\langle\varphi(\p)d(f)(\psi(\x,\y))\right\rangle^{\eo(\x,\y)}
\end{equation}
and 
\begin{align}\label{lem5.5.2}
\begin{aligned}
&T_0(f)(\x)=c\left\langle f(\phi(\x,\y))\right\rangle^{\ez(\x,\y)},\\
&d(T_0(f))(\y)
=c^{\et(\x,\y)}\left\langle\varphio(\x,\y)d(f)(\psi(\x,\y))\right\rangle^{\eo(\x,\y)},
\end{aligned}
\end{align}
where we set $\varphio(\x,\y)=\varphi(\x,\y,1)$.
\end{claim}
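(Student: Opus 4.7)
The plan is to first derive \eqref{lem5.5.1} by direct substitution into Lemma~\ref{lem4.7}, and then extract \eqref{lem5.5.2} by analyzing the $\nu$-dependence of both sides of \eqref{lem5.5.1}. For \eqref{lem5.5.1}, Claim~\ref{cl1} gives $\al(\p)^{\ez(\p)}=c$, hence $\al(\p)=c^{\ez(\p)}$ and $\al(\p)^{\eo(\p)}=c^{\ez(\p)\eo(\p)}=c^{\et(\x,\y)}$. Using the multiplicativity $\langle\la\mu\rangle^{\e}=\langle\la\rangle^{\e}\langle\mu\rangle^{\e}$ (with $\la\in\T$, so that $\langle\la\rangle^{\e}=\la^{\e}$), together with Claims~\ref{lem5.3} and \ref{lem5.4} to drop the $\nu$-dependence of $\phi$ and $\psi$, and with Lemmas~\ref{lem4.2}, \ref{lem4.6} to drop the $\nu$-dependence of $\ez$ and $\eo$, I factor the constants out of the two $\langle\cdot\rangle$-brackets in Lemma~\ref{lem4.7} and obtain \eqref{lem5.5.1} immediately.

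For \eqref{lem5.5.2}, I rewrite \eqref{lem5.5.1} as $A+B\nu=c^{\et(\x,\y)}\langle\varphi(\p)d(f)(\psi(\x,\y))\rangle^{\eo(\x,\y)}$, where $A=T_0(f)(\x)-c\langle f(\phi(\x,\y))\rangle^{\ez(\x,\y)}$ and $B=d(T_0(f))(\y)$. Taking moduli gives $|A+B\nu|=|d(f)(\psi(\x,\y))|$ for every $\nu\in\T$, since both $c^{\et(\x,\y)}$ and $\varphi(\p)$ lie in $\T$. When $d(f)(\psi(\x,\y))=0$, this already forces $A=B=0$, which is exactly \eqref{lem5.5.2}. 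Otherwise Lemma~\ref{prop5.1} (applied after dividing by $|d(f)(\psi(\x,\y))|$) leaves only the alternatives $A=0$ or $B=0$. In the case $A=0$, the first line of \eqref{lem5.5.2} holds at once, and specializing \eqref{lem5.5.1} at $\nu=1$ yields the second line with $\varphio(\x,\y)=\varphi(\x,\y,1)$.

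The main obstacle is ruling out the alternative $B=0$ with $A\neq0$. If this occurred for some $f_0$ with $d(f_0)(\psi(\x,\y))\neq0$, then the identity $A=c^{\et(\x,\y)}\langle\varphi(\x,\y,\nu)d(f_0)(\psi(\x,\y))\rangle^{\eo(\x,\y)}$ would make the right-hand side constant in $\nu$, and since $d(f_0)(\psi(\x,\y))\neq0$ the map $\nu\mapsto\varphi(\x,\y,\nu)$ itself would be constant on $\T$. Substituting this constant back into \eqref{lem5.5.1} for an \emph{arbitrary} $f_1\in\A$ would make its right-hand side independent of $\nu$, forcing $d(T_0(f_1))(\y)=0$ for every $f_1\in\A$. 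Since $T_0$ is surjective by Mazur--Ulam and $d(\A)=\B$ by condition~\eqref{(1)}, every $g\in\B$ would then satisfy $g(\y)=0$; but condition~\eqref{(7c)} supplies $h_3\in\A$ with $d(h_3)(\y)=1$, so $g=d(h_3)\in\B$ delivers the contradiction. Therefore only $A=0$ is possible, completing \eqref{lem5.5.2}.
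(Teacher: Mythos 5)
Your argument is correct, and it rests on the same two pillars as the paper's proof: Lemma~\ref{prop5.1} applied to an expression whose modulus is constant in $\nu$, and the observation that if the right-hand side of \eqref{lem5.5.1} were independent of $\nu$ then $d(T_0(f_1))(\y)=0$ for every $f_1\in\A$, which contradicts the surjectivity of $T_0$ together with condition \eqref{(7c)}. The organization differs, though. The paper applies the dichotomy of Lemma~\ref{prop5.1} to a single test function $f_0$ chosen by \eqref{(7c)} with $f_0(\phi(\x,\y))=0$ and $d(f_0)(\psi(\x,\y))=1$; after ruling out $d(T_0(f_0))(\y)=0$ it obtains $c^{\et(\x,\y)}\varphi(\x,\y,\nu)^{\eo(\x,\y)}=d(T_0(f_0))(\y)\,\nu$, so that the right-hand side of \eqref{lem5.5.1} becomes visibly affine in $\nu$ for \emph{every} $f$, and \eqref{lem5.5.2} drops out by comparing the constant and linear coefficients. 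You instead run the dichotomy $A=0$ or $B=0$ separately for each $f$, dispose of the degenerate case $d(f)(\psi(\x,\y))=0$ by hand, and eliminate the branch $B=0$ with $A\neq 0$ by showing it would force $\varphi(\x,\y,\cdot)$ to be constant --- the same contradiction mechanism, invoked once per function rather than once and for all. The paper's single test function buys the explicit linear dependence of $\varphi(\x,\y,\cdot)$ on $\nu$ as a by-product, but since only the value $\varphi(\x,\y,1)$ is used downstream (in Claims~\ref{cl8} and \ref{cl9}), your per-function case analysis delivers exactly what the claim requires at the cost of a slightly longer argument.
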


Let $\x\in\PA$ and $\y\in\PB$. In terms of the notation above, we can restate equation \eqref{lem5.1.1} as equation \eqref{lem5.5.1}. For each $\nu\in\T$, we set $\p_{\nu}=(\x,\y,\nu)$. By 
condition~\eqref{(7c)}, there exists $f_0\in\A$ such that $f_0(\phi(\x,\y))=0$ and $d(f_0)(\psi(\x,\y))=1$. By substituting these two equations into \eqref{lem5.5.1}, we obtain
\begin{equation}\label{lem5.5.3}
T_0(f_0)(\x)+d(T_0(f_0))(\y)\nu
=c^{\et(\x,\y)}\varphi(\p_{\nu})^{\eo(\x,\y)}
\end{equation}
for all $\nu\in\T$. Upon taking the modulus of both sides of the last equation, we obtain that
\[
|T_0(f_0)(\x)+d(T_0(f_0))(\y)\nu|=1
\]
for all $\nu\in\T$. In view of Lemma \ref{prop5.1}, we infer that
\begin{align}\label{lem5.5.4}
\begin{aligned}
&|T_0(f_0)(\x)|+|d(T_0(f_0))(\y)|=1,\\
&T_0(f_0)(\x)\cdot d(T_0(f_0))(\y)=0.
\end{aligned}
\end{align}
Our next objective is to demonstrate that $d(T_0(f_0))(\y)\neq0$. Assume otherwise that $d(T_0(f_0))(\y)=0$. We derive from \eqref{lem5.5.3} that $T_0(f_0)(\x)
=c^{\et(\x,\y)}\varphi(\p_{\nu})^{\eo(\x,\y)}$. By combining the last equation with \eqref{lem5.5.1}, we obtain that
\[
T_0(f)(\x)+d(T_0(f))(\y)\nu=c\left\langle f(\phi(\x,\y))\right\rangle^{\ez(\x,\y)}+T_0(f_0)(\x)\left\langle d(f)(\psi(\x,\y))\right\rangle^{\eo(\x,\y)}
\]
for all $f\in\A$ and $\nu\in\T$. By applying \eqref{(7c)}, there exists $f_1\in\A$ such that $d(T_0(f_1))(\y)=1$. This is a consequence of the fact that $T_0$ is surjective. Upon substituting $f=f_1$ into the aforementioned equation, we have that
\[
T_0(f_1)(\x)+\nu=c\left\langle f_1(\phi(\x,\y))\right\rangle^{\ez(\x,\y)}+T_0(f_0)(\x)\left\langle d(f_1)(\psi(\x,\y))\right\rangle^{\eo(\x,\y)}
\]
for all $\nu\in\T$. This is impossible since the right-hand side of the last equation is independent of $\nu\in\T$. Therefore, we conclude that $d(T_0(f_0))(\y)\neq0$. 

Consequently, this implies that $T_0(f_0)(\x)=0$ and that $|d(T_0(f_0))(\y)|=1$ in accordance with equation \eqref{lem5.5.4}. By combining the preceding equation with equation \eqref{lem5.5.3}, we obtain the following:
\begin{equation}\label{lem5.5.5}
d(T_0(f_0))(\y)\nu
=c^{\et(\x,\y)}\varphi(\p_\nu)^{\eo(\x,\y)}.
\end{equation}
Substituting the last equation into \eqref{lem5.5.1} yields that
\[
T_0(f)(\x)+d(T_0(f))(\y)\nu =c\left\langle f(\phi(\x,\y))\right\rangle^{\ez(\x,\y)} +d(T_0(f_0))(\y)\nu\left\langle d(f)(\psi(\x,\y))\right\rangle^{\eo(\x,\y)}
\]
for all $f\in\A$ and $\nu\in\T$. A comparison of the constant and linear terms of the last equation with respect to $\nu$ reveals that
\begin{align}
&T_0(f)(\x)=c\left\langle f(\phi(\x,\y))\right\rangle^{\ez(\x,\y)},
\nonumber\\
&d(T_0(f))(\y)
=d(T_0(f_0))(\y)\left\langle d(f)(\psi(\x,\y))\right\rangle^{\eo(\x,\y)}
\label{lem5.5.6}
\end{align}
for all $f\in\A$.
For the sake of simplicity, we shall rewrite $\varphi(\x,\y,1)=\varphio(\x,\y)$.
The equation \eqref{lem5.5.5} is rewritten as 
$$
d(T_0(f_0))(\y)=c^{\et(\x,\y)}\varphio(\x,\y)^{\eo(\x,\y)}.
$$
Subsequently, we can reformulate equation \eqref{lem5.5.6} as 
\[
d(T_0(f))(\y)=c^{\et(\x,\y)}\left\langle\varphio(\x,\y)d(f)(\psi(\x,\y))\right\rangle^{\eo(\x,\y)}
\]
for all $f\in\A$.
\qed

\begin{claim}\label{lem5.6}
For each $\xz\in\PA$, the maps $\phi(\xz,\cdot)\colon\PB\to\PA$ and $\ez(\xz,\cdot)\colon\PB\to\set{\pm1}$ which map $\y\in\PB$ to $\phi(\xz,\y)$ and $\ez(\xz,\y)$, respectively, 
are both constant.

For the sake of brevity, we shall henceforth write $\phi(\x,\y)=\phi(\x)$ and $\ez(\x,\y)=\ez(\x)$ for $\x\in\PA$ and $\y\in\PB$. This yields the following equation:
\begin{equation}\label{lem5.6.1}
T_0(f)(\x)=c\left\langle f(\phi(\x))\right\rangle^{\ez(\x)}
\end{equation}
for all $f\in\A$ and $\x\in\PA$.
Moreover, the maps $\phi$ and $\ez$, defined on $\PA$,
are continuous on $\PA$.
\end{claim}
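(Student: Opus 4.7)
The strategy parallels Claims~\ref{lem5.3} and \ref{lem5.4}: I would exploit the fact that the left-hand side $T_0(f)(\x)$ in the first identity of \eqref{lem5.5.2} manifestly does not depend on $\y$. Fixing $\xz\in\PA$ and $\yo,\yt\in\PB$ and writing down the two instances of that identity yields
\[
c\left\langle f(\phi(\xz,\yo))\right\rangle^{\ez(\xz,\yo)}=T_0(f)(\xz)=c\left\langle f(\phi(\xz,\yt))\right\rangle^{\ez(\xz,\yt)}
\]
for every $f\in\A$, and from this single identity I plan to deduce both constancy statements.

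First I would show $\phi(\xz,\yo)=\phi(\xz,\yt)$ by contradiction: if these two points of $\PA$ differ, condition \eqref{(7a)} (applied with the role of $\xz$ played by $\phi(\xz,\yo)$ and with $\xo=\xt$ set to $\phi(\xz,\yt)$) supplies $f\in\A$ with $f(\phi(\xz,\yo))=1$ and $f(\phi(\xz,\yt))=0$. Plugging this into the displayed identity forces $c=0$, contradicting $c\in\T$. Once $\phi(\xz,\cdot)$ is known to be constant, the identity collapses to $\left\langle f(\phi(\xz))\right\rangle^{\ez(\xz,\yo)}=\left\langle f(\phi(\xz))\right\rangle^{\ez(\xz,\yt)}$ for all $f\in\A$. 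If the two values of $\ez$ differed, then $f(\phi(\xz))$ would have to equal its complex conjugate for every $f\in\A$, hence always be real. But condition \eqref{(7c)} (at any pair $(\phi(\xz),\y)$) yields $h\in\A$ with $h(\phi(\xz))=1$, and complex linearity of $\A$ then produces $ih\in\A$ with $(ih)(\phi(\xz))=i\notin\R$, a contradiction. Hence $\ez(\xz,\cdot)$ is constant as well.

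Equation \eqref{lem5.6.1} is then an immediate restatement of the first identity of \eqref{lem5.5.2} after suppressing $\y$ in the notation. For the continuity claim, I would pick any fixed $\yz\in\PB$ (non-empty since $\B\neq\{0\}$) and observe that $\phi(\x)=\phi(1,(\x,\yz,1))$ and $\ez(\x)=\ez((\x,\yz,1))$ for all $\x\in\PA$, so continuity on $\PA$ follows from the continuity of the global maps $\phi\colon\T\times\Z\to\PA$ and $\ez\colon\Z\to\set{\pm1}$ established in Lemmas~\ref{lem4.4} and \ref{lem4.2}, composed with the evidently continuous inclusion $\x\mapsto(\x,\yz,1)$. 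I do not anticipate a serious obstacle; the only delicate point is matching the correct separation axiom to each constancy step---condition \eqref{(7a)} for the $\phi$-step versus condition \eqref{(7c)} combined with complex scalar multiplication for the $\ez$-step.
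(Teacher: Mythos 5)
Your proposal is correct and follows essentially the same route as the paper: constancy of $\phi(\xz,\cdot)$ via condition \eqref{(7a)} applied to the $\y$-independence of $T_0(f)(\xz)$ in \eqref{lem5.5.2}, and constancy of $\ez(\xz,\cdot)$ via a function taking the value $i$ at $\phi(\xz)$ obtained from condition \eqref{(7c)} and complex scalar multiplication. The only cosmetic difference is that the paper evaluates directly at such an $f_1$ with $f_1(\phi(\xz))=i$ rather than phrasing the step as ``otherwise every $f(\phi(\xz))$ is real,'' which amounts to the same contradiction.
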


Let us fix an arbitrary $\xz\in\PA$. We shall show that $\phi(\xz,\yo)=\phi(\xz,\yt)$ for all $\yo,\yt\in\PB$. We may, however, suppose that there exist $\yo,\yt\in\PB$ such that
$\phi(\xz,\yo)\neq\phi(\xz,\yt)$. By
condition~\eqref{(7a)}, there exists $f_0\in\A$ such that $f_0(\phi(\xz,\yo))=1$ and $f_0(\phi(\xz,\yt))=0$. We infer from \eqref{lem5.5.2} that
\[
c=
c\left\langle f_0(\phi(\xz,\yo))\right\rangle^{\ez(\xz,\yo)}
=T_0(f_0)(\xz)
=c\left\langle f_0(\phi(\xz,\yt))\right\rangle^{\ez(\xz,\yt)}=0,
\]
which is in contradiction with the fact that $c\in\T$. Therefore, we conclude that $\phi(\xz,\yo)=\phi(\xz,\yt)$ for all $\yo,\yt\in\PB$. Consequently, we may and do write $\phi(\xz,\y)=\phi(\xz)$
for $\y\in\PB$. It follows from \eqref{lem5.5.2} that $T_0(f)(\xz)=c\left\langle f(\phi(\xz))\right\rangle^{\ez(\xz,\y)}$ for all $f\in\A$ and $\y\in\PB$. By virtue of \eqref{(7c)}, there exists $f_1\in\A$ such that $f_1(\phi(\xz))=i$. Substituting $f=f_1$ into the preceding equation yields the following:
\[
T_0(f_1)(\xz)=c\left\langle f_1(\phi(\xz))\right\rangle^{\ez(\xz,\y)}=c\ez(\xz,\y)i
\]
for all $\y\in\PB$. This implies that the function $\ez(\xz,\y)$ is constant with respect to $\y\in\PB$. We shall henceforth write $\ez(\xz,\y)=\ez(\xz)$. Since $\xz\in\PA$ was chosen arbitrarily,
we conclude that $T_0(f)(\x)=c\left\langle f(\phi(\x))\right\rangle^{\ez(\x)}$ for all $f\in\A$ and $\x\in\PA$.
The maps $\phi$ and $\ez$, defined on $\PA$, are both continuous
on $\PA$ by definition.
\qed

\begin{claim}\label{cl2}
The sets $\PAp=\set{\x\in\PA:\ez(\x)=1}$ and $\PAm=\set{\x\in\PA:\ez(\x)=-1}$ are both closed and open subsets of $\PA$ such that $\PAp\cup\PAm=\PA$ and $\PAp\cap\PAm=\emptyset$.
\end{claim}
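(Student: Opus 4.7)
The claim is essentially a direct consequence of the continuity of $\ez$ together with the fact that it takes values only in $\{\pm 1\}$, so the proof should be quite short.

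The plan is as follows. First, since Lemma~\ref{lem4.2} gives $\ez(\p)\in\set{\pm1}$ for every $\p\in\Z$, and Claim~\ref{lem5.6} shows that this value depends only on the first coordinate $\x\in\PA$, the descended map $\ez\colon\PA\to\set{\pm1}$ is well defined and each $\x\in\PA$ satisfies exactly one of $\ez(\x)=1$ or $\ez(\x)=-1$. From this one immediately reads off the set-theoretic decomposition $\PAp\cup\PAm=\PA$ and $\PAp\cap\PAm=\emptyset$.

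Next I would verify that the descended map $\ez\colon\PA\to\set{\pm1}$ is continuous. The original $\ez\colon\Z\to\set{\pm1}$ is continuous by Lemma~\ref{lem4.2}, so picking any fixed $\yz\in\PB$ and any $\nu_0\in\T$ (which exist because $\PB\neq\emptyset$ by condition~\eqref{(7c)}), the map $\x\mapsto(\x,\yz,\nu_0)$ from $\PA$ into $\Z$ is continuous, and composing with $\ez\colon\Z\to\set{\pm1}$ exhibits $\ez|_{\PA}$ as a continuous function. Alternatively, this continuity is already asserted at the end of the proof of Claim~\ref{lem5.6}.

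Finally, equip $\set{\pm1}$ with the discrete topology (the subspace topology from $\R$). Both $\set{1}$ and $\set{-1}$ are clopen in $\set{\pm1}$, so their preimages under the continuous map $\ez\colon\PA\to\set{\pm1}$, namely $\PAp$ and $\PAm$, are both clopen subsets of $\PA$. No obstacle is anticipated; the statement is really just unpacking the definitions of $\PAp$, $\PAm$ and of continuity into a discrete space.
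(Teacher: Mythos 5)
Your proof is correct and follows essentially the same route as the paper: the paper's argument is simply the one-line observation that $\ez\colon\PA\to\set{\pm1}$ is continuous by Lemma~\ref{lem4.2} and Claim~\ref{lem5.6}, so $\PAp$ and $\PAm$ are the preimages of the clopen sets $\set{1}$ and $\set{-1}$ and hence clopen, with the partition property immediate from $\ez$ taking only the values $\pm1$. Your additional details (fixing $\yz$ and $\nu_0$ to exhibit the descended map as a composition of continuous maps) are a harmless elaboration of what the paper leaves implicit.
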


It suffices to note that the function $\ez$, defined on the set $\PA$, is continuous by Lemma~\ref{lem4.2} and Claim~\ref{lem5.6}.
\qed

\begin{claim}\label{cl8}
For each $\yz\in\PB$, the map
$\psi(\cdot,\yz)\colon\PA\to\PB$,
which maps $\x\in\PA$ to $\psi(\x,\yz)$,
is a constant map.
For the sake of brevity,
we shall henceforth write
$\psi(\x,\y)=\psi(\y)$
for $\x\in\PA$
and $\y\in\PB$.
This yields the following equation
for all $f\in\A$, $\x\in\PA$ and $\y\in\PB$:
\begin{equation}\label{cl8.1}
d(T_0(f))(\y)
=c^{\et(\x,\y)}\langle\varphio(\x,\y)d(f)(\psi(\y))\rangle^{\eo(\x,\y)}.
\end{equation}
In addition, the map $\psi\colon\PB\to\PB$
is continuous on $\PB$.
\end{claim}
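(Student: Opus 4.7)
\emph{Plan of proof.}
The argument will be a direct transposition of the one used for $\phi$ in Claim~\ref{lem5.6}, with the roles of $\PA$ and $\PB$ swapped and with condition~\eqref{(7b)} replacing condition~\eqref{(7a)}. The plan is to fix $\yz\in\PB$ and show, by contradiction, that $\psi(\xo,\yz)=\psi(\xt,\yz)$ for all $\xo,\xt\in\PA$.

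Suppose there exist $\xo,\xt\in\PA$ with $\psi(\xo,\yz)\neq\psi(\xt,\yz)$. I will invoke condition~\eqref{(7b)} with the role of its ``$\yz$'' played by $\psi(\xo,\yz)$, with both of its $\y_1,\y_2$ equal to $\psi(\xt,\yz)$, and with $\x_0,\x_1,\x_2$ all taken to be one common arbitrary point of $\PA$; the non-triviality hypothesis of~\eqref{(7b)} then reduces precisely to our assumption. This produces $h_1\in\A$ with $d(h_1)(\psi(\xo,\yz))=1$ and $d(h_1)(\psi(\xt,\yz))=0$. Applying the second line of~\eqref{lem5.5.2} to $h_1$ at $(\x,\y)=(\xo,\yz)$ yields $d(T_0(h_1))(\yz)=c^{\et(\xo,\yz)}\langle\varphio(\xo,\yz)\rangle^{\eo(\xo,\yz)}$, a quantity of modulus one; the same formula applied at $(\x,\y)=(\xt,\yz)$ yields $d(T_0(h_1))(\yz)=0$, because the inner factor $d(h_1)(\psi(\xt,\yz))$ vanishes. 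Since the two values must coincide, this is the desired contradiction.

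Once $\psi(\x,\yz)$ is known to be independent of $\x$, I write $\psi(\yz)$ for the common value, and equation~\eqref{cl8.1} follows by substituting into the second line of~\eqref{lem5.5.2}. For continuity of the resulting map $\psi\colon\PB\to\PB$, I fix any $\xz\in\PA$ and any $\nuz\in\T$; the map $\y\mapsto\psi(\y)$ then coincides with $\y\mapsto\psi(\xz,\y,\nuz)$, which is the restriction of the continuous $\psi$ furnished by Lemma~\ref{lem4.5}, and hence is continuous from $\PB$ to $\PB$. The only delicate point is the bookkeeping in applying~\eqref{(7b)}: the three-tuple $(\x_k,\y_k)_{k=0,1,2}$ must be arranged so that its non-triviality hypothesis reduces exactly to the contradiction assumption, and so that the constraints $h_1(\x_k)=0$ do not enter~\eqref{lem5.5.2}; taking all three $\x_k$ equal accomplishes both.
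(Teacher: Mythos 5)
Your proposal follows the paper's own proof essentially verbatim: the same contradiction via condition~\eqref{(7b)} producing $h_1$ with $d(h_1)(\psi(\xo,\yz))=1$ and $d(h_1)(\psi(\xt,\yz))=0$, the same evaluation of the second line of~\eqref{lem5.5.2} at $(\xo,\yz)$ and $(\xt,\yz)$ to equate a modulus-one quantity with $0$, and the same continuity argument via Lemma~\ref{lem4.5}. Your explicit bookkeeping of how the tuple $(\x_k,\y_k)$ is chosen in~\eqref{(7b)} is a correct elaboration of a step the paper leaves implicit.
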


Let us fix $\yz\in\PB$ arbitrarily.
We shall prove that $\psi(\xo,\yz)=\psi(\xt,\yz)$
for all $\xo,\xt\in\PA$.
Let us assume, for the sake of contradiction,
that there exist two points $\xo,\xt\in\PA$
such that $\psi(\xo,\yz)\neq\psi(\xt,\yz)$.
By virtue of condition \eqref{(7b)},
there exists $h_1\in\A$ such that
$d(h_1)(\psi(\xo,\yz))=1$ and $d(h_1)(\psi(\xt,\yz))=0$.
By combining these two equations with \eqref{lem5.5.2},
we obtain the conclusion that
\begin{multline*}
c^{\et(\xo,\yz)}\langle\varphio(\xo,\yz)\rangle^{\eo(\xo,\yz)}
=c^{\et(\xo,\yz)}\langle\varphio(\xo,\yz)d(h_1)(\psi(\xo,\yz))\rangle^{\eo(\xo,\yz)}\\
=d(T_0(h_1))(\yz)
=c^{\et(\xt,\yz)}\langle\varphio(\xt,\yz)d(h_1)(\psi(\xt,\yz))\rangle^{\eo(\xt,\yz)}
=0.
\end{multline*}
This is in contradiction with the fact that
$|c^{\et(\xo,\yz)}|=1=|\langle\varphio(\xo,\yz)\rangle^{\eo(\xo,\yz)}|$.
Therefore, we conclude that
$\psi(\xo,\yz)=\psi(\xt,\yz)$
for all $\xo,\xt\in\PA$.
We shall express $\psi(x,\yz)$ as $\psi(\yz)$
for all $\x\in\PA$.
We deduce from \eqref{lem5.5.2} that
\[
d(T_0(f))(\yz)
=c^{\et(\x,\yz)}\langle\varphio(\x,\yz)d(f)(\psi(\yz))\rangle^{\eo(\x,\yz)}
\]
for all $f\in\A$ and $\x\in\PA$.
By definition, it can be verified that
the map $\psi$, defined on $\PB$, is continuous
on $\PB$.
\qed

\begin{claim}\label{cl9}
For each $\yz\in\PB$, the maps
$c^{\et(\cdot,\yz)}\langle\varphio(\cdot,\yz)\rangle^{\eo(\cdot,\yz)}
\colon\PA\to\T$ and
$\eo(\cdot,\yz)\colon\PA\to\set{\pm1}$,
which map $\x\in\PA$ to
$c^{\et(\x,\yz)}\langle\varphio(\x,\yz)\rangle^{\eo(\x,\yz)}$
and $\eo(\x,\yz)$, respectively,
are constant maps.
For simplicity in notation,
we shall henceforth write
$c^{\et(\x,\y)}\langle\varphio(\x,\y)\rangle^{\eo(\x,\y)}=u(\y)$
and $\eo(\x,\y)=\eo(\y)$
for $\x\in\PA$ and $\y\in\PB$.
This yields the following equation
for all $f\in\A$ and $\y\in\PB$:
\begin{equation}\label{cl9.1}
d(T_0(f))(\y)=u(y)\langle d(f)(\psi(\y))\rangle^{\eo(\y)}.
\end{equation}
Moreover, the maps $u\colon\PB\to\T$
and $\eo\colon\PB\to\set{\pm1}$
are continuous on $\PB$.
\end{claim}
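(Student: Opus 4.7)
The plan is to exploit the fact that the left-hand side of equation \eqref{cl8.1} does not depend on $\x$, by testing \eqref{cl8.1} against two carefully chosen functions. Fix $\yz \in \PB$ arbitrarily. By condition \eqref{(7c)} applied at the point $\psi(\yz) \in \PB$, there exists $f_0 \in \A$ with $d(f_0)(\psi(\yz)) = 1$. Substituting $f = f_0$ and $\y = \yz$ into \eqref{cl8.1} yields
\[
d(T_0(f_0))(\yz) = c^{\et(\x,\yz)}\langle \varphio(\x,\yz)\rangle^{\eo(\x,\yz)}
\]
for every $\x \in \PA$. Since the left-hand side is independent of $\x$, the map $\x \mapsto c^{\et(\x,\yz)}\langle \varphio(\x,\yz)\rangle^{\eo(\x,\yz)}$ is constant on $\PA$; denote its value by $u(\yz)$. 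Because $c,\,\varphio(\x,\yz) \in \T$ and $\et(\x,\yz),\,\eo(\x,\yz) \in \set{\pm 1}$, we have $u(\yz) \in \T$.

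To show that $\eo(\cdot,\yz)$ is constant, apply the same substitution with $if_0 \in \A$ in place of $f_0$; by the complex linearity of $d$, $d(if_0)(\psi(\yz)) = i$. Using the multiplicativity $\langle\la\mu\rangle^{\e} = \langle\la\rangle^{\e}\langle\mu\rangle^{\e}$ from Notation~\ref{nota3} together with $\langle i\rangle^{\e} = i^{\e}$ (since $i \in \T$), equation \eqref{cl8.1} becomes
\[
d(T_0(if_0))(\yz) = c^{\et(\x,\yz)}\langle \varphio(\x,\yz)\rangle^{\eo(\x,\yz)} \cdot i^{\eo(\x,\yz)} = u(\yz)\, i^{\eo(\x,\yz)}
\]
for every $\x \in \PA$. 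The left-hand side is independent of $\x$ and $u(\yz) \neq 0$, so $i^{\eo(\x,\yz)}$ is constant in $\x$; since $i \neq i^{-1}$, this forces $\eo(\x,\yz)$ to be constant on $\PA$. Write the common value as $\eo(\yz)$. Substituting the resulting constants $u(\y)$ and $\eo(\y)$ into \eqref{cl8.1} then gives equation \eqref{cl9.1}.

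For the continuity assertion, fix any $\xz \in \PA$. By construction, $u(\y) = c^{\et(\xz,\y)}\langle\varphio(\xz,\y)\rangle^{\eo(\xz,\y)}$ and $\eo(\y) = \eo(\xz,\y)$. By Lemma~\ref{lem4.6}, $\eo$ is continuous on $\Z$, so $\y \mapsto \eo(\xz,\y)$ is continuous from $\PB$ into the discrete space $\set{\pm 1}$, hence locally constant on $\PB$. On an open set where $\eo(\xz,\cdot) = \e$ is constant, $\et(\xz,\cdot) = \ez(\xz)\e$ is also constant, and $\langle\varphio(\xz,\y)\rangle^{\e}$ equals either $\varphio(\xz,\y)$ or $\ov{\varphio(\xz,\y)}$, both continuous functions of $\y$ because $\varphio(\xz,\cdot) = \varphi(\xz,\cdot,1)$ inherits continuity from $\varphi$. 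This yields local, hence global, continuity of $u$ on $\PB$. The only genuine step in the argument is the two-function trick in the second paragraph; the rest is routine bookkeeping.
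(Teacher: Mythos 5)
Your proposal is correct and follows essentially the same route as the paper: test \eqref{cl8.1} against a function whose derivative-image at $\psi(\yz)$ is $1$ to see that $u(\yz)$ is independent of $\x$, then against one whose value there is $i$ (your $if_0$ plays the role of the paper's $h_3$, which the paper likewise obtains by complex linearity of $d$) to pin down $\eo(\cdot,\yz)$. Your continuity argument is just a more explicit version of what the paper leaves as ``by definition,'' so there is nothing to flag.
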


Let us fix an arbitrary $\yz\in\PB$.
By applying \eqref{(7b)}, there exists $h_2\in\A$ such that
$d(h_2)(\psi(\yz))=1$.
Upon substituting $f=h_2$ into equation \eqref{cl8.1}
results in the conclusion that
$d(T_0(h_2))(\yz)
=c^{\et(\x,\yz)}\langle\varphio(\x,\yz)\rangle^{\eo(\x,\yz)}$
for all $x\in\PA$.
This shows that the value
$c^{\et(\x,\yz)}\langle\varphio(\x,\yz)\rangle^{\eo(\x,\yz)}$
is independent of the variable $\x\in\PA$.
By defining $u(\yz)=c^{\et(\x,\yz)}\langle\varphio(\x,\yz)\rangle^{\eo(\x,\yz)}$,
we derive from equation \eqref{cl8.1}
that the following result holds for all $f\in\A$ and $x\in\PA$:
\[
d(T_0(f))(\yz)
=u(\yz)\langle d(f)(\psi(\yz))\rangle^{\eo(\x,\yz)}.
\]
It is possible to select $h_3\in\A$ such that
$d(h_3)(\psi(\yz))=i$.
The result is a direct consequence of condition \eqref{(7b)}
and the complex linearity of the mapping $d$.
Subsequent to the aforementioned equation,
it can be deduced that
$d(T_0(h_3))(\yz)=u(\yz)\eo(\x,\yz)i$
for all $\x\in\PA$.
This shows that $\eo(\x,\yz)$ is independent
of the variable $\x\in\PA$.
We shall henceforth write $\eo(\x,\yz)=\eo(\yz)$
for all $\x\in\PA$.
Since $\yz\in\PB$ was chosen arbitrarily,
we may regard $u$ and $\eo$ as mappings
from $\PB$ to $\T$ and $\PB$ to $\set{\pm1}$,
respectively.
Therefore, we conclude that
$d(T_0(f))(\y)=u(\y)\langle d(f)(\psi(\y))\rangle^{\eo(\y)}$
for all $f\in\A$ and $\y\in\PB$.
It can be seen that $u$ and $\eo$ are
continuous maps by definition.
\qed

\begin{claim}\label{cl3}
In light of Lemma \ref{lem4.4} and Claims \ref{lem5.3} and \ref{lem5.6}, the map $\phi$, defined on the set $\PA$, is a homeomorphism onto $\PA$.
\end{claim}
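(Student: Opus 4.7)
The plan is to verify that $\phi\colon\PA\to\PA$ is continuous, surjective and injective, and then to conclude via the standard fact that a continuous bijection from a compact space onto a Hausdorff space is a homeomorphism.

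Continuity and surjectivity are essentially immediate from what has already been done. Lemma~\ref{lem4.4} gives a continuous surjection $\phi(1,\cdot)\colon\Z\to\PA$, and Claims~\ref{lem5.3} and~\ref{lem5.6} show this map is constant along the $\T$-factor and the $\PB$-factor respectively, so it descends to a continuous surjection $\phi\colon\PA\to\PA$. Since $\PA$, being closed in the compact Hausdorff space $\X$, is itself compact Hausdorff, only injectivity remains to be verified before the standard topology argument can be invoked.

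For injectivity I would argue by contradiction. Suppose $\phi(a)=\phi(b)$ for distinct $a,b\in\PA$. Apply condition~\eqref{(7a)} with the triple $(a,b,b)\in\PA^3$ (together with any choice of points in $\PB^3$) to obtain $h_0\in\A$ with $h_0(a)=1$ and $h_0(b)=0$. Since $T_0$ is a surjective isometry, choose $f\in\A$ with $T_0(f)=h_0$, and set $z:=f(\phi(a))=f(\phi(b))$. Substituting into formula~\eqref{lem5.6.1} yields
\[
1=h_0(a)=c\langle z\rangle^{\ez(a)}\qquad\text{and}\qquad 0=h_0(b)=c\langle z\rangle^{\ez(b)}.
\]
Since $c\in\T$, the right-hand equation forces $z=0$, and then the left-hand equation reads $1=0$, a contradiction. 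Hence $\phi$ is injective, and the homeomorphism claim follows.

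No real obstacle appears: the organizing principle is that formula~\eqref{lem5.6.1} collapses the homeomorphism assertion to a one-line separation-of-points argument, with condition~\eqref{(7a)} supplying the separating function and the surjectivity of $T_0$ allowing us to pull that function back to the domain. The only subtle point is the $\langle\cdot\rangle^{\ez(\cdot)}$ notation, but both cases $\ez(a)=\ez(b)$ and $\ez(a)\neq\ez(b)$ collapse together via the trivial observation that $\langle 0\rangle^{\pm 1}=0$, so no case analysis is actually required.
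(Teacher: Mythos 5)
Your proposal is correct and follows essentially the same route as the paper: continuity and surjectivity come from the earlier lemmas, injectivity is obtained by using condition \eqref{(7a)} to separate the two domain points, pulling the separating function back through the surjective map $T_0$, and reading off a contradiction from formula \eqref{lem5.6.1}, after which the compact-to-Hausdorff continuous bijection argument finishes the claim. The only cosmetic difference is that you phrase injectivity as a proof by contradiction while the paper argues the contrapositive directly.
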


In accordance with Definition~\ref{defn2}, the map $\phi$ is surjective and continuous. We shall prove that $\phi$, defined on $\PA$, is injective. For $\xo,\xt\in\PA$ with $\xo\neq\xt$, we can choose $f_0\in\A$ so that $f_0(\xo)=1$ and $f_0(\xt)=0$ by \eqref{(7a)}. Since $T_0$ is surjective, there exists $f_1\in\A$ such that $T_0(f_1)=f_0$. Consequently, $T_0(f_1)(\xo)=1$ and $T_0(f_1)(\xt)=0$. By applying \eqref{lem5.6.1}, we obtain the following two equations:
\begin{align*}
&1=T_0(f_1)(\xo)=c\left\langle f_1(\phi(\xo))\right\rangle^{\ez(\xo)},\\
&0=T_0(f_1)(\xt)=c\left\langle f_1(\phi(\xt))\right\rangle^{\ez(\xt)}.
\end{align*}
Therefore, $f_1(\phi(\xo))\neq0=f_1(\phi(\xt))$. This implies that $\phi(\xo)\neq\phi(\xt)$, and thus $\phi$ is injective. It follows that $\phi$ is a bijective and continuous map from the compact space $\PA$ onto the Hausdorff space $\PA$. Consequently, it must be a homeomorphism.
\qed

\begin{claim}\label{cl10}
The sets $\PBp=\set{\y\in\PB:\eo(\y)=1}$ and $\PBm=\set{\y\in\PB:\eo(\y)=-1}$ are both closed and open subsets of $\PB$ such that $\PBp\cup\PBm=\PB$ and $\PBp\cap\PBm=\emptyset$.
\end{claim}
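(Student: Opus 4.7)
The plan is to mirror the proof of Claim~\ref{cl2} almost verbatim, transferring the argument from $\PA$ with $\ez$ to $\PB$ with $\eo$. The essential ingredient is already in hand: Claim~\ref{cl9} established that the map $\eo\colon\PB\to\set{\pm1}$ is well-defined (i.e., $\eo(\x,\y)$ is independent of $\x\in\PA$, so it descends to a function on $\PB$) and is continuous on $\PB$.

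First I would recall from Claim~\ref{cl9} that $\eo$ is a continuous function on $\PB$ taking values in the two-point discrete space $\set{\pm1}$. Next, I would observe that $\set{1}$ and $\set{-1}$ are both clopen subsets of $\set{\pm1}$ endowed with the discrete (equivalently, the subspace) topology inherited from $\R$. Then the identifications
\[
\PBp=\eo^{-1}(\set{1}),\qq\PBm=\eo^{-1}(\set{-1})
\]
exhibit $\PBp$ and $\PBm$ as preimages of clopen sets under a continuous map, so both are clopen in $\PB$.

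Finally, since $\eo(\y)\in\set{\pm1}$ for each $\y\in\PB$, every point of $\PB$ lies in exactly one of $\PBp$ or $\PBm$, which immediately gives the disjoint decomposition $\PBp\cup\PBm=\PB$ and $\PBp\cap\PBm=\emptyset$. There is no real obstacle here; all the work was done in establishing the continuity of $\eo$ in Claim~\ref{cl9}, and the present claim is the direct topological consequence.
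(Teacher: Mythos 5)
Your proposal is correct and coincides with the paper's own argument: both deduce the claim directly from the continuity of $\eo$ on $\PB$ (established in Lemma~\ref{lem4.6} and Claim~\ref{cl9}) by taking preimages of the clopen sets $\set{1}$ and $\set{-1}$. The paper merely states this in one sentence; you have only spelled out the same topological step in more detail.
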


This is a direct consequence of the fact that the function $\eo$, defined on the set $\PB$, is continuous by Lemma~\ref{lem4.6} and Claim~\ref{cl9}.
\qed

\begin{claim}\label{cl12}
The map $\psi$, defined on the set $\PB$, is a homeomorphism onto $\PB$.
\end{claim}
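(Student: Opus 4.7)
The plan is to mirror the proof of Claim~\ref{cl3} line by line. From Claim~\ref{cl8} the map $\psi\colon\PB\to\PB$ is already known to be continuous, and surjectivity is immediate from Lemma~\ref{lem4.5} together with the identity $\psi(\x,\y,\nu)=\psi(\y)$ established in Claim~\ref{cl8}: any $\y^*\in\PB$ is the image of some $\p\in\Z$ under $\psi$, and this value depends only on the $\y$-coordinate of $\p$. Since $\PB$ is compact Hausdorff, a continuous bijection $\PB\to\PB$ is automatically a homeomorphism, so only injectivity remains.

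To establish injectivity, I would suppose $\yo,\yt\in\PB$ with $\yo\neq\yt$ and construct an element of $\A$ whose $d$-image separates them, playing the same role that the function provided by~\eqref{(7a)} plays in Claim~\ref{cl3}. The separator comes from condition~\eqref{(7b)}: reading our $\yo$ as the $\yz$ of~\eqref{(7b)} and taking both of the $\yo,\yt$ in~\eqref{(7b)} to be our $\yt$ (legitimate because the three $\y$-labels in~\eqref{(7b)} need not be distinct, and the hypothesis $\yz\notin\set{\yo,\yt}$ collapses to $\yo\neq\yt$), with any fixed choice of $\x$-coordinates in $\PA$, we obtain $h_1\in\A$ with $d(h_1)(\yo)=1$ and $d(h_1)(\yt)=0$. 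Since $T_0$ is surjective, pick $f_1\in\A$ with $T_0(f_1)=h_1$, and substitute into equation~\eqref{cl9.1} at $\y=\yo$ and $\y=\yt$. Because $u(\yo),u(\yt)\in\T$, these two evaluations force $d(f_1)(\psi(\yo))\neq 0=d(f_1)(\psi(\yt))$, whence $\psi(\yo)\neq\psi(\yt)$.

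The only technical point requiring care is the manipulation of~\eqref{(7b)} above to produce a $d$-level separator of two arbitrary points of $\PB$; once that observation is made, the injectivity argument is essentially a transcription of the one in Claim~\ref{cl3}, with~\eqref{cl9.1} replacing~\eqref{lem5.6.1} and~\eqref{(7b)} replacing~\eqref{(7a)}. I do not anticipate any substantive obstacle beyond this bookkeeping.
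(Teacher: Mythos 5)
Your proposal is correct and follows essentially the same route as the paper: surjectivity and continuity are inherited from Lemma~\ref{lem4.5} and Claims~\ref{lem5.4} and \ref{cl8}, injectivity is obtained by using condition~\eqref{(7b)} to produce $h_1$ with $d(h_1)(\yo)=1$, $d(h_1)(\yt)=0$, pulling it back through the surjection $T_0$ and evaluating~\eqref{cl9.1} at $\yo$ and $\yt$, and the homeomorphism conclusion follows from the compact-to-Hausdorff argument. Your extra remark about how the labels in~\eqref{(7b)} may be collapsed is a correct reading of the hypothesis and matches the paper's (implicit) use of that condition.
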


It is evident that the map $\psi\colon\PB\to\PB$
is a surjective continuous map,
as asserted in Definition~\ref{defn2}
(cf. also Lemma~\ref{lem4.5}, Claims~\ref{lem5.4} and \ref{cl8}).
The following proof will demonstrate
that $\psi$ is injective.
For each $\yo,\yt\in\PB$ with $\yo\neq\yt$,
there exists $f_0\in\A$ such that
$d(f_0)(\yo)=1$ and $d(f_0)(\yt)=0$
by \eqref{(7b)}.
It is then possible to select $f_1\in\A$ such that
$T_0(f_1)=f_0$, since $T_0$ is surjective.
It follows that $d(T_0(f_1))(\yo)=1$ and
$d(T_0(f_1))(\yt)=0$.
Subsequently, we deduce from \eqref{cl9.1} that
\begin{align*}
1
&=
d(T_0(f_1))(\yo)
=u(\yo)\langle d(f_1)(\psi(\yo))\rangle^{\eo(\yo)},\\
0
&=
d(T_0(f_1))(\yt)
=u(\yt)\langle d(f_1)(\psi(\yt))\rangle^{\eo(\yt)}.
\end{align*}
This yields the result that
$d(f_1)(\psi(\yo))\neq0=d(f_1)(\psi(\yt))$
since $|u|=1$ on $\PB$ by Claim~\ref{cl9}.
This shows that $\psi(\yo)\neq\psi(\yt)$,
thereby demonstrating the assertion
that $\psi$ is injective.
Consequently, $\psi$ is a bijective continuous map
from the compact space $\PB$ onto
the Hausdorff space $\PB$.
Therefore, it must be a homeomorphism.
\qed

\begin{claim}\label{cl4}
For any $f\in\A$, $\x\in\PA$
and $\y\in\PB$, we have
\begin{align*}
&T(f)(\x)-T(0)(\x)=
\begin{cases}
cf(\phi(\x))&\x\in\PAp\\[1mm]
c\ov{f(\phi(\x))}&\x\in\PA\setminus\PAp
\end{cases},\\
&
d(T(f)-T(0))(\y)=
\begin{cases}
u(\y)d(f)(\psi(\y))&\y\in\PBp\\[1mm]
u(\y)\ov{d(f)(\psi(\y))}&\y\in\PB\setminus\PBp
\end{cases}.
\end{align*}
\end{claim}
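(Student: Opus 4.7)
The plan is to assemble Claim~\ref{cl4} directly from the machinery already developed; no new construction is needed, only careful bookkeeping of the notation $\langle\cdot\rangle^{\e}$ introduced in Notation~\ref{nota3}. First I would recall that $T_0=T-T(0)$ by \eqref{T_0} and that $d$ is complex linear, so $d(T_0(f))=d(T(f)-T(0))$. Hence it suffices to express $T_0(f)(\x)$ and $d(T_0(f))(\y)$ in the stated case-by-case form.

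For the first equation I would apply \eqref{lem5.6.1}, which gives $T_0(f)(\x)=c\langle f(\phi(\x))\rangle^{\ez(\x)}$ for $\x\in\PA$, and then split into cases using Claim~\ref{cl2}. When $\x\in\PAp$ we have $\ez(\x)=1$, and by the definition $\langle a+ib\rangle^{1}=a+ib$ (setting $a+ib=f(\phi(\x))$ with $a=\Re f(\phi(\x))$, $b=\Im f(\phi(\x))$) one obtains $\langle f(\phi(\x))\rangle^{1}=f(\phi(\x))$, so $T_0(f)(\x)=cf(\phi(\x))$. When $\x\in\PA\setminus\PAp=\PAm$ we have $\ez(\x)=-1$, and $\langle a+ib\rangle^{-1}=a-ib=\ov{a+ib}$, which yields $T_0(f)(\x)=c\,\ov{f(\phi(\x))}$. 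This gives the stated dichotomy for $T(f)(\x)-T(0)(\x)$.

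For the second equation I would invoke \eqref{cl9.1}, which states $d(T_0(f))(\y)=u(\y)\langle d(f)(\psi(\y))\rangle^{\eo(\y)}$ for every $\y\in\PB$, and then split into cases using Claim~\ref{cl10}. When $\y\in\PBp$ we have $\eo(\y)=1$, so $\langle d(f)(\psi(\y))\rangle^{1}=d(f)(\psi(\y))$ and the right-hand side becomes $u(\y)d(f)(\psi(\y))$. When $\y\in\PB\setminus\PBp=\PBm$ we have $\eo(\y)=-1$, so $\langle d(f)(\psi(\y))\rangle^{-1}=\ov{d(f)(\psi(\y))}$ and the right-hand side becomes $u(\y)\,\ov{d(f)(\psi(\y))}$. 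Combining with $d(T_0(f))(\y)=d(T(f)-T(0))(\y)$ completes the proof.

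There is no real obstacle here; the content of the claim is essentially a rephrasing of \eqref{lem5.6.1} and \eqref{cl9.1} once the bracket notation $\langle\cdot\rangle^{\pm 1}$ is decoded via Notation~\ref{nota3} and the partitions $\PAp\sqcup\PAm=\PA$, $\PBp\sqcup\PBm=\PB$ from Claims~\ref{cl2} and \ref{cl10} are invoked. The only minor point to watch is that the sets $\PAp$ and $\PBp$ being closed and open (as recorded in those claims) is exactly what makes the case split in Theorem~\ref{thm1} legitimate, so I would mention this in passing before concluding.
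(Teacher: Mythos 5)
Your argument is correct and is exactly the paper's proof: the paper disposes of this claim in one line by citing \eqref{T_0}, \eqref{lem5.6.1} and \eqref{cl9.1}, and your write-up simply makes explicit the decoding of $\langle\cdot\rangle^{\pm1}$ via Notation~\ref{nota3} together with the partitions from Claims~\ref{cl2} and \ref{cl10}. No issues.
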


These equations follow immediately from \eqref{T_0}, \eqref{lem5.6.1} and \eqref{cl9.1}.
\qed

\begin{claim}
Suppose that there exist a constant $c\in\T$,
homeomorphisms $\phi\colon\PA\to\PA$,
$\psi\colon\PB\to\PB$ and continuous functions
$\ez\colon\PA\to\set{\pm1}$, $\eo\colon\PB\to\set{\pm1}$
and $u\colon\PB\to\T$ such that $T_0$ satisfies the following
equations for all $f\in\A$, $\x\in\PA$ and $\y\in\PB$:
\[
T_0(f)(\x)=c\langle f(\phi(\x))\rangle^{\ez(\x)}
\qq\mbox{and}\qq
d(T_0(f))(\y)=u(\y)\langle d(f)(\psi(\y))\rangle^{\eo(\y)}.
\]
Then $T$ must be an isometry.
\end{claim}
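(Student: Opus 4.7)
The plan is to compute $\VA{T(f)-T(g)}$ directly and show it equals $\VA{f-g}$ for all $f,g\in\A$, using only the prescribed boundary formulas for $T_0=T-T(0)$. The key observation is that $T(f)-T(g)=T_0(f)-T_0(g)$, and the norm decomposes as $\VA{h}=\VX{h}+\VY{d(h)}$; moreover $\VX{h}=\sup_{\x\in\PA}|h(\x)|$ and $\VY{k}=\sup_{\y\in\PB}|k(\y)|$ for $h\in\A$ and $k\in\B$, since $\cha\subset\PA$ and $\chb\subset\PB$ are boundaries for $\A$ and $\B$, respectively.

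First I would note that the bracket $\left\langle a+ib\right\rangle^{\e}=a+ib\e$ is real linear in $a+ib$ for each $\e\in\set{\pm1}$ (it is either the identity or complex conjugation) and satisfies $|\left\langle z\right\rangle^{\e}|=|z|$. Applying the hypothesized formulas to $f$ and to $g$, subtracting, and using the complex linearity of $d$ granted by condition \eqref{(1)}, I obtain
\begin{align*}
(T_0(f)-T_0(g))(\x)&=c\left\langle (f-g)(\phi(\x))\right\rangle^{\ez(\x)},\qquad \x\in\PA,\\
d(T_0(f)-T_0(g))(\y)&=u(\y)\left\langle d(f-g)(\psi(\y))\right\rangle^{\eo(\y)},\qquad \y\in\PB.
\end{align*}

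Next I would take moduli. Since $|c|=|u(\y)|=1$ and $|\left\langle\cdot\right\rangle^{\e}|=|\cdot|$, the right-hand sides collapse to $|(f-g)(\phi(\x))|$ and $|d(f-g)(\psi(\y))|$. Taking the supremum over $\PA$ and using that $\phi$ is a self-homeomorphism of $\PA$ yields $\sup_{\x\in\PA}|(f-g)(\phi(\x))|=\sup_{\x\in\PA}|(f-g)(\x)|=\VX{f-g}$; the analogous computation on $\PB$, with $\psi$ a self-homeomorphism of $\PB$, gives $\sup_{\y\in\PB}|d(T_0(f)-T_0(g))(\y)|=\VY{d(f-g)}$. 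Summing the two contributions,
\[
\VA{T(f)-T(g)}=\VX{f-g}+\VY{d(f-g)}=\VA{f-g},
\]
so $T$ is an isometry. There is no real obstacle here: the argument is essentially routine once one recognises that $\left\langle\cdot\right\rangle^{\e}$ is both real linear and modulus-preserving, and that $\phi$ and $\psi$ permute the Shilov boundaries bijectively; the substantive content of Theorem~\ref{thm1} lies entirely in the forward direction established in the previous claims.
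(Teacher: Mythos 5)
Your proof is correct and follows essentially the same route as the paper: subtract the two formulas using real linearity of $\langle\cdot\rangle^{\e}$ and of $d$, take moduli (which kills $c$, $u$ and the bracket), and use that $\phi$ and $\psi$ are surjections of the Shilov boundaries, which are boundaries for $\A$ and $\B$, so the suprema are unchanged. No gaps.
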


We derive from the above equations
that the following results hold
for all $f,g\in\A$:
\begin{align*}
\V{T_0(f)-T_0(g)}
&=
\V{T_0(f)-T_0(g)}_\X+\V{d(T_0(f)-T_0(g))}_\Y\\
&=\sup_{\x\in\PA}|T_0(f)(\x)-T_0(g)(\x)|
+\sup_{\y\in\PB}|d(T_0(f))(\y)-d(T_0(g))(\y)|\\
&=
\sup_{\x\in\PA}|c\langle f(\phi(\x))-g(\phi(\x))\rangle^{\ez(\x)}|
+\sup_{\y\in\PB}|u(\y)\langle d(f)(\psi(\y))-d(g)(\psi(\y))\rangle^{\eo(\y)}|\\
&=
\sup_{\x\in\PA}|f(\phi(\x))-g(\phi(\x))|
+\sup_{\y\in\PB}|d(f)(\psi(\y))-d(g)(\psi(\y))|\\
&=\V{f-g}_\X+\V{d(f-g)}_\Y
=\V{f-g},
\end{align*}
where we have used the fact that $\phi$ and $\psi$ are
both surjective maps defined on the boundaries
$\PA$ and $\PB$ of $\A$ and $\B$, respectively.
Consequently, we conclude that $T_0$ is an isometry,
and therefore, $T=T_0+T(0)$ is an isometry.
This completes the proof of Theorem~\ref{thm1}.
\qed

\subsection{Proof of Corollary 2}

Let $\X$ be a one point set, written as $\set{\xz}$. We obtain the following by \eqref{lem5.6.1} and \eqref{cl9.1}:
\begin{align*}
&T_0(f)(\xz)=c\left\langle f(\phi(\xz))\right\rangle^{\ez(\xz)},\\
&d(T_0(f))(\y)=u(\y)\left\langle d(f)(\psi(\y))\right\rangle^{\eo(\y)}
\end{align*}
for all $f\in\A$ and $\y\in\PB$.
We may conclude that $\PA=\set{\xz}$
since $\emptyset\neq\cha\subset\PA\subset\X=\set{\xz}$.
Consequently, we obtain $\phi(\xz)=\xz$.
It follows that $T_0(f)(\xz)=cf(\xz)$ for all $f\in\A$ if $\ez(\xz)=1$, and $T_0(f)(\xz)=c\ov{f(\xz)}$ for all $f\in\A$ if $\ez(\xz)=-1$.
\qed

\subsection{Examples}
\begin{enumerate}
\item Let $C^1([0,1])$ be the complex linear space of all continuously differentiable complex-valued functions defined on the closed unit interval $[0,1]$. We set the following:
\begin{center}
$\K=\Y=[0,1],
\q
\A=C^1([0,1]),
\q
\B=C([0,1]).$
\end{center}
The mapping $d\colon\A\to\B$ is defined as $d(f)=f'$ for $f\in\A$. As shown in \cite[Lemma~1.6]{kaw}, the conditions \eqref{(5)} and \eqref{(6)} are satisfied. Since $\A$ contains polynomials, it follows that \eqref{(7)} holds. Two norms, $\VS{\cdot}$ and $\Vs{\cdot}$, are defined as follows: $\VS{f}=\Vinf{f}+\Vinf{f'}$ and $\Vs{f}=|f(0)|+\Vinf{f'}$ for $f\in C^1([0,1])$, where $\Vinf{\cdot}$ denotes the supremum norm on $[0,1]$. The characterization of surjective isometries on the Banach spaces $(C^1([0,1]),\VS{\cdot})$ and $(C^1([0,1]),\Vs{\cdot})$ was provided in \cite{kaw,miu1}. These characterizations of surjective isometries are special cases of Theorem~\ref{thm1} and Corollary~\ref{cor2}:
\begin{enumerate}
\item Let $\X=[0,1]$. Then $\VA{f}=\VX{f}+\VY{d(f)}=\VS{f}$ for $f\in C^1([0,1])$.
\item Let $\X=\set{0}$. Then $\VA{f} =|f(0)|+\Vinf{f'}=\Vs{f}$ for $f\in C^1([0,1])$.
\end{enumerate}

\item Let $\mathcal{A}$ be the complex linear space of all analytic functions on the open unit disc $\Di$, which can be extended to continuous functions on the closed unit disc $\Db$. For each $f\in\mathcal{A}$, we denote by $\hat{f}$ the unique continuous extension of $f$ to $\Db$. We define $\mathcal{S}_{\mathcal{A}} =\set{f\in\mathcal{A}:f'\in\mathcal{A}}$. We set
\begin{center}
$K=Y=\Db,
\q
\A=\set{\hat{f}\in C(\Db):f\in\mathcal{S_A}},
\q
\B=\set{\hat{g}\in C(\Db):g\in\mathcal{A}}.$
\end{center}
Let $d\colon\A\to\B$ be a map defined as follows: $d(\hat{f})=\wh{f'}$ for $\hat{f}\in\A$.
\begin{enumerate}
\item If $\X=\Db$, then \cite[Proposition~2.1, Lemmas~2.2 and 2.3]{miu3} show\rsout{s} that $\A$ satisfies the conditions \eqref{(5)}, \eqref{(6)} and \eqref{(7)}. Then $\VLA{\hat{f}}=\VL{\hat{f}}_{\Db}+\VL{\wh{f'}}_{\Db}$ for $\hat{f}\in\A$, where $\V{\cdot}_{\Db}$ represents the supremum norm on $\Db$. The characterization of surjective isometries from $(\A,\VA{\cdot})$ onto itself is provided by Theorem~\ref{thm1}.
While \cite{miu3} described surjective isometries from $(\mathcal{S_A},\VS{\cdot})$ onto itself, where $\VS{f}=\V{f}_{\Di}+\V{f'}_{\Di}$ for $f\in\A$.
\item If $\X=\set{\xz}$, then $\VLA{\hat{f}}=|f(\xz)|+\VL{\wh{f'}}_{\Db}$ for $\hat{f}\in\A$. The form of surjective isometries from $(\A,\VA{\cdot})$ onto itself is obtained by Corollary~\ref{cor2}. The characterization of surjective isometries between $(\mathcal{S_A},\Vs{\cdot})$ was presented in \cite{miu2}, where $\Vs{f}=|f(0)|+\V{f'}_{\Di}$ for $f\in\mathcal{S_A}$.
\end{enumerate}

\item Let $H^\infty$ be the commutative Banach algebra of all bounded analytic functions on the open unit disc. We define $\mathcal{S}^\infty =\set{f\in\mathcal{A}:f'\in H^\infty}$, where $\mathcal{A}$ is defined as in the previous example. The maximal ideal space of $H^\infty$ is denoted by $\M$. The Gelfand transform of $H^\infty$, denoted by $\wh{H}^\infty$, is a closed subalgebra of $C(\M)$. We set
\begin{center}
$K=Y=\M,
\q
\A=\set{\hat{f}\in C(\T):f\in\mathcal{S}^\infty},
\q
\B=\wh{H}^\infty$,
\end{center}
where $\hat{f}$ denotes the Gelfand transform of $f$. Let us define a map $d\colon\A\to\B$ as $d(\hat{f})=\wh{f'}$ for $\hat{f}\in\A$. It can be seen that the conditions \eqref{(5)}, \eqref{(6)} and \eqref{(7)} are satisfied by $\A$, as demonstrated in \cite[Lemmas~2.7, 2.10 and Propositions~3.4, 3.5]{miu4}.
\begin{enumerate}
\item If $\X=\T$, then $\VLA{\hat{f}}=\VL{\hat{f}}_{\T}+\VL{\wh{f'}}_{\M}$ for $\hat{f}\in\A$. The form of surjective isometries on $(\A,\VA{\cdot})$ is  given by Theorem~\ref{thm1}. The description of surjective isometries on $(\mathcal{S}^\infty,\VS{\cdot})$ can be found in \cite{miu4}, where $\VS{f}=\V{f}_{\Di}+\V{f'}_{\Di}$ for $f\in\mathcal{S}^\infty$.

\item If $\X=\set{\xz}$
for some $\xz\in\M$, then $\VLA{\hat{f}}=|f(\xz)|+\VL{\wh{f'}}_{\M}$ for $\wh{f}\in\A$.
Corollary~\ref{cor2} provides a characterization of surjective isometries on $(\A,\VA{\cdot})$,
while \cite{miu4} presented the form of surjective isometries
on $(\mathcal{S}^\infty,\Vs{\cdot})$, where $\Vs{f}=|f(0)|+\V{f'}_{\Di}$ for $f\in\mathcal{S}^\infty$.
\end{enumerate}
\end{enumerate}


\end{document}